\renewcommand{\O}{\mathcal{O}}
\newcommand{\Z}{\mathbb{Z}}
\newcommand{\Q}{\mathbb{Q}}
\newcommand{\Fp}{\mathbb{F}_p}
\newcommand{\Fq}{\mathbb{F}_q}
\newcommand{\cl}{\operatorname{cl}}
\newcommand{\clD}{\cl(D)}
\newcommand{\Ell}{\operatorname{Ell}}
\newcommand{\Ellt}{\Ell_t}
\newcommand{\EllD}{\Ell_\O}
\newcommand{\End}{\operatorname{End}}
\newcommand{\Glt}{\Gamma_{\ell,t}}
\newcommand{\M}{\mathsf{M}}
\newcommand{\tr}{\operatorname{tr}}
\newcommand{\PD}{\mathcal{P}_D}
\newcommand{\DPF}{\mathcal{D}_{\scriptscriptstyle\rm PF}}
\newcommand{\pmax}{p_{\scriptscriptstyle\rm{M}}}
\newcommand{\vmax}{v_{\scriptscriptstyle\rm{M}}}
\newcommand{\lmax}{\ell_{\scriptscriptstyle\rm{M}}}
\newcommand{\inkron}[2]{(\frac{#1}{#2})}
\def\Gal{\operatorname{Gal}}
\renewcommand{\vec}[1]{{\boldsymbol{#1}}}
\newcommand{\algstart}[2]{\smallskip\noindent{\bf Algorithm} #1. \emph{#2}\begin{enumerate}}
\newcommand{\algend}{\end{enumerate}\vspace{4pt}}
\newcommand{\algitem}{\vspace{1pt}\item}
\newcommand{\ord}{\operatorname{ord}}
\newcommand{\llog}{\operatorname{llog}}
\newcommand{\lllog}{\operatorname{lllog}}
\newcommand{\Li}{\operatorname{Li}}
\newcommand{\slt}{\medspace<\medspace}
\newcommand{\sle}{\medspace\le\medspace}
\newcommand{\seq}{\medspace=\medspace}
\newcommand{\sge}{\medspace\ge\medspace}
\newcommand{\sgt}{\medspace>\medspace}
\newtheorem*{thm1}{Theorem~1}
\newtheorem{proposition}{Proposition}
\newtheorem{corollary}{Corollary}
\newtheorem{lemma}{Lemma}
\theoremstyle{definition}
\newtheorem{definition}{Definition}
\renewcommand\labelenumi{\theenumi.}
\begin{document}

\title[Computing Hilbert class polynomials with the CRT]{Computing Hilbert class polynomials\\with the Chinese Remainder Theorem}

\author{Andrew V. Sutherland}
\address{Massachusetts Institute of Technology, Cambridge, Massachusetts 02139}
\curraddr{}
\email{drew@math.mit.edu}
\copyrightinfo{2009}{by the author}
\thanks{}

\subjclass[2000]{Primary 11Y16; Secondary 11G15, 11G20, 14H52}

\date{}

\dedicatory{}

\begin{abstract}
We present a space-efficient algorithm to compute the Hilbert class polynomial $H_D(X)$ modulo a positive integer $P$, based on an explicit form of the Chinese Remainder Theorem.  Under the Generalized Riemann Hypothesis, the algorithm uses $O(|D|^{1/2+\epsilon}\log{P})$ space and has an expected running time of $O(|D|^{1+\epsilon})$.  We describe practical optimizations that allow us to handle larger discriminants than other methods, with $|D|$ as large as $10^{13}$ and $h(D)$ up to $10^{6}$.  We apply these results to construct pairing-friendly elliptic curves of prime order, using the CM method.
\end{abstract}

\maketitle

\section{Introduction}
Elliptic curves with a prescribed number of points have many applications, including elliptic curve primality proving \cite{Atkin:ECPP} and pairing-based cryptography \cite{Freeman:PairingTaxonomy}.  The number of points on an elliptic curve $E/\Fq$ is of the form $N=q+1-t$, where $|t|\le 2\sqrt{q}$.  For an ordinary elliptic curve, we additionally require $t\not\equiv 0\bmod p$, where $p$ is the characteristic of $\Fq$.  We may construct such a curve via the $\emph{CM method}$.

To illustrate, let us suppose $D<-4$ is a quadratic discriminant satisfying
\begin{equation}\label{equation:TotallySplitPrime}
4q=t^2-v^2D,
\end{equation}
for some integer $v$, and let $\O$ denote the order of discriminant $D$.
The $j$-invariant of the elliptic curve $\mathbb{C}/\O$ is an algebraic integer, and
its minimal polynomial $H_D(X)$ is the {\it Hilbert class polynomial} for the discriminant $D$.
This polynomial splits completely in $\Fq$, and its roots are the $j$-invariants of elliptic curves with endomorphism ring isomorphic to $\O$.  To construct such a curve, we reduce $H_D\bmod p$, compute a root in $\Fq$, and define an elliptic curve $E/\Fq$ with this $j$-invariant.  Either $E$ or its quadratic twist has $N$ points, and we may easily determine which.  For more details on constructing elliptic curves with the CM method, see \cite{Atkin:ECPP,Broker:EfficientCMConstructions,Lay:CM}.

The most difficult step in this process is obtaining $H_D$, an integer polynomial of degree $h(D)$ (the class number) and total size $O(|D|^{1+\epsilon})$ bits.  There are several algorithms that, under reasonable heuristic assumptions, can compute $H_D$ in quasi-linear time \cite{Belding:HilbertClassPolynomial,Broker:pAdicClassPolynomial,Couveignes:ClassPolynomial,Enge:FloatingPoint}, but its size severely restricts the feasible range of~$D$.  The bound $|D|<10^{10}$ is commonly cited as a practical upper limit for the CM method \cite{Freeman:PairingTaxonomy,Kachisa:PairingFriendly,Karabina:MNTCurves,Vercauteren:IDCrypto}, and this already assumes the use of alternative class polynomials that are smaller (and less general) than $H_D$.  As noted in \cite{Enge:FloatingPoint}, \emph{space} is the limiting factor in these computations, not running time.  But the CM method only uses $H_D\bmod p$, which is typically much smaller than $H_D$.

We present here an algorithm to compute $H_D\bmod P$, for any positive integer~$P$, using $O(|D|^{1/2+\epsilon}\log{P})$ space.  This includes the case where $P$ is larger than the coefficients of $H_D$ (for which we have accurate bounds), hence it may be used to determine $H_D$ over $\Z$.  Our algorithm is based on the CRT approach \cite{Agashe:CRTClassPolynomial,Belding:HilbertClassPolynomial,Chao:CRTCMmethod}, which computes the coefficients of $H_D$ modulo many ``small" primes $p$ and then applies the Chinese Remainder Theorem (CRT).  As in \cite{Agashe:CRTClassPolynomial}, we use the explicit CRT \cite[Thm.~3.1]{Bernstein:ModularExponentiation} to obtain $H_D\bmod P$, and we
modify the algorithm in \cite{Belding:HilbertClassPolynomial} to compute $H_D\bmod p$ more efficiently.
Implementing the CRT computation as an online algorithm reduces the space required.  We obtain a probabilistic algorithm to compute $H_D\bmod P$ whose output is always correct (a \emph{Las Vegas} algorithm).

Under the Generalized Riemann Hypothesis (GRH), its expected running time is $O(|D|^{1+\epsilon})$.  More precisely, we prove the following theorem.

\begin{thm1}
Under the GRH, {\rm Algorithm}~$2$ computes $H_D\bmod P$ in expected time $O\bigl(|D|\log^{5}|D|(\log\log|D|)^4\bigr),$ using $O\bigl(|D|^{1/2}(\log|D|+\log{P})\log\log|D|\bigr)$ space.
\end{thm1}
\noindent
In addition to the new space bound, this improves the best rigorously proven time bound for computing $H_D$ under the GRH \cite[Thm.~1]{Belding:HilbertClassPolynomial}, by a factor of $\log^2|D|$.
Heuristically, the time complexity is $O(|D|\log^{3+\epsilon}|D|)$.
We also describe practical improvements that make the algorithm substantially faster than alternative methods when $|D|$ is large, and provide
computational results for $|D|$ up to $10^{13}$ and $h(D)$ up to $10^6$.  In our largest examples the total size of $H_D$ is many terabytes, but less than 200 megabytes are used to compute $H_D$ modulo a 256-bit prime.

\section{Overview}\label{section:Overview}
Let $\O$ be a quadratic order with discriminant $D<-4$.  With the CRT approach, we must compute $H_D\bmod p$ for many primes $p$.  We shall use primes in the set
\begin{equation}\label{equation:PD}
\PD = \{p>3\medspace {\rm prime}: 4p=t^2-v^2D \medspace\text{for some} \medspace t, v\in \Z_{>0}\}.
\end{equation}
These primes split completely in the ring class field $K_\O$ of $\O$, split into principal ideals in $\Q[\sqrt{D}]$, and are norms of elements in $\O$, see \cite[Prop.~2.3, Thm.~3.2]{Atkin:ECPP}.
For each $p\in\PD$, the positive integers $t=t(p)$ and $v=v(p)$ are uniquely determined.

We first describe how to compute $H_D\bmod p$ for a prime $p\in \PD$, and then explain how to obtain $H_D\bmod P$ for an arbitrary positive integer $P$.  Let us begin by recalling a few pertinent facts from the theory of complex multiplication.

For any field $F$, we define the set
\begin{equation}\label{equation:EllD}
\EllD(F) = \{j(E/F):\End(E)\cong\O\},
\end{equation}
the $j$-invariants of elliptic curves defined over $F$ whose endomorphism rings are isomorphic to $\O$.
There are two possibilities for the isomorphism in (\ref{equation:EllD}), but as in \cite{Belding:HilbertClassPolynomial} we make a canonical choice and henceforth identify $\End(E)$ with $\O$.  For $j(E)\in\EllD(F)$ and an invertible ideal $\mathfrak{a}$ in $\O$, let $E[\mathfrak{a}]$ denote the group of $\mathfrak{a}$-torsion points, those points annihilated by every $z\in\mathfrak{a}\subseteq\O\cong\End(E)$.  We then define
\begin{equation*}
j(E)^\mathfrak{a}=j(E/E[\mathfrak{a}]).
\end{equation*}
The map $j(E)\mapsto j(E)^\mathfrak{a}$ corresponds to an isogeny with kernel $E[\mathfrak{a}]$ and degree equal to the norm of $\mathfrak{a}$.  This yields a group action of the ideal group of $\O$ on the set $\EllD(K_\O)$, and this action factors through the class group $\cl(\O)=\clD$.

For a prime $p\in\PD$, a bijection between $\EllD(\Fp)$ and $\EllD(K_\O)$ arises from the Deuring lifting theorem, see \cite[Thms.~13.12-14]{Lang:EllipticFunctions}. The following proposition then follows from the theory of complex multiplication.
\begin{proposition}\label{prop:CM}
For each prime $p\in\PD$:
\begin{enumerate}
\item
$H_D(X)$ splits completely over $\Fp$.  It has $h(D)$ roots, which form $\EllD(\Fp)$.
\item
The map $j(E)\mapsto j(E)^{\mathfrak{a}}$ defines a free transitive action of $\clD$ on $\EllD(\Fp)$.
\end{enumerate}
\end{proposition}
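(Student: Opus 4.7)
The plan is to deduce both parts from classical complex multiplication theory combined with the Deuring lifting theorem cited just before the statement. First I would fix a prime $\mathfrak{p}$ of $K_\O$ lying over $p$ and let $\pi$ denote reduction modulo $\mathfrak{p}$; because $p\in\PD$ splits completely in $K_\O$, the prime $p$ is unramified there, and $\pi$ identifies the residue field at $\mathfrak{p}$ with $\Fp$. I would also note that $4p=t^2-v^2D$ with $v,t>0$ and $D<-4$ forces $0<t<2\sqrt{p}<p$, so $p\nmid t$ and all CM elliptic curves in sight are ordinary; hence reduction preserves the endomorphism ring.

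For part (1), I would start from the standard description of the $h(D)$ roots of $H_D$ in $\bar{\Q}$ as the values $j(\mathbb{C}/\mathfrak{a})$ with $[\mathfrak{a}]\in\clD$, all lying in $K_\O$. Applying $\pi$ to these roots yields $h(D)$ elements of $\Fp$, pairwise distinct by unramifiedness, and gives a complete splitting of $H_D\bmod p$ over $\Fp$. To identify the root set with $\EllD(\Fp)$, I would invoke the Deuring correspondence in two directions: each $\pi(j(\mathbb{C}/\mathfrak{a}))$ is the $j$-invariant of an ordinary elliptic curve over $\Fp$ with endomorphism ring $\O$, hence lies in $\EllD(\Fp)$; conversely each $j\in\EllD(\Fp)$ lifts to some $j(\mathbb{C}/\mathfrak{a})$ and is therefore a root of $H_D\bmod p$.

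For part (2), the free transitive action of $\clD$ on $\EllD(K_\O)$ is standard CM theory, and the task is to transport it along the bijection $\EllD(K_\O)\isar\EllD(\Fp)$ induced by $\pi$. The key compatibility is $\pi(j(E)^\mathfrak{a})=\pi(j(E))^\mathfrak{a}$, where the right-hand action is computed on the reduced curves. After replacing $\mathfrak{a}$ by a class representative whose norm is coprime to $p$, the isogeny $E\to E/E[\mathfrak{a}]$ is defined over $K_\O$ with good reduction at $\mathfrak{p}$, and its reduction is an isogeny of ordinary elliptic curves over $\Fp$ whose kernel equals the $\mathfrak{a}$-torsion of the reduced curve, because the induced isomorphism $\O\isar\End(\tilde{E})$ identifies $E[\mathfrak{a}]$ with $\tilde{E}[\mathfrak{a}]$. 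I expect this compatibility check to be the main obstacle, as it is the one place where one must combine good reduction of the isogeny, preservation of endomorphism rings, and compatibility of reduction with the $\O$-module structure on torsion; once it is in place, freeness and transitivity on $\EllD(\Fp)$ are immediate consequences of their analogues over $K_\O$.
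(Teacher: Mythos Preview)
Your proposal is correct and follows exactly the route the paper indicates: the paper does not give a detailed proof but simply invokes the Deuring lifting theorem (citing Lang, Thms.~13.12--14) for the bijection $\EllD(K_\O)\isar\EllD(\Fp)$ and then states that the proposition follows from the theory of complex multiplication, with references to Cox, Serre, Lang, and Silverman. Your sketch supplies precisely the details behind those citations---ordinariness from $p\nmid t$, separability of $H_D\bmod p$ from unramifiedness, and the compatibility of reduction with the $\mathfrak{a}$-isogeny action---so there is nothing to add.
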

\noindent
For further background, we recommend the expositions in \cite{Cox:ComplexMultiplication} and \cite{Serre:ComplexMultiplication}, and also the material in \cite[Ch.~10]{Lang:EllipticFunctions} and \cite[Ch.~II]{Silverman:EllipticCurves2}.
\medskip

Let $p$ be a prime in $\PD$.
Our plan is to compute $H_D\bmod p$ by determining its roots and forming the product of the corresponding linear factors.
By Proposition~\ref{prop:CM}, we can obtain the roots by enumerating the set $\EllD(\Fp)$ via the action of $\clD$.
All that is required is an element of $\EllD(\Fp)$ to serve as a starting point.  Thus we seek an elliptic curve $E/\Fp$ with $\End(E)\cong\O$.  Now it may be that very few elliptic curves $E/\Fp$ have this endomorphism ring.
Our task is made easier if we first look for an elliptic curve that at least has the desired Frobenius endomorphism, even if its endomorphism ring might not be isomorphic to $\O$.

For $j(E)\in\EllD(\Fp)$, the Frobenius endomorphism $\pi_E\in\End(E)\cong\O$ corresponds to an element of $\O$ with norm $p$ and trace $t$.  Let us consider the set
\begin{equation}\label{equation:ellt}
\Ellt(\Fp) = \{j(E/\Fp): \tr(\pi_E)=t\},
\end{equation}
the $j$-invariants of all elliptic curves $E/\Fp$ with trace $t$.
We may regard $j\in\Ellt(\Fp)$ as identifying a particular elliptic curve $E/\Fp$ satisfying $j(E)=j$ and $\tr(\pi_E)=t$, since such an $E$ is determined up to isomorphism \cite[Prop.~14.19]{Cox:ComplexMultiplication}.
We have $\EllD(\Fp)\subseteq\Ellt(\Fp)$, and note that $\Ellt(\Fp)=\Ell_{-t}(\Fp)$.

Recall that elliptic curves $E/\Fp$ and $E'/\Fp$ are isogenous over $\Fp$ if and only if $\tr(\pi_E)=\tr(\pi_E')$, see \cite[Thm.~13.8.4]{Husemoller:EllipticCurves}.  Given $j(E)\in\Ellt(\Fp)$, we can efficiently obtain an isogenous $j(E')\in\EllD(\Fp)$, provided $v(p)$ has no large prime factors.

This yields Algorithm~1.  Its structure matches \cite[Alg.~2]{Belding:HilbertClassPolynomial}, but we significantly modify the implementation of Steps 1, 2, and 3.

\algstart{{\bf 1}}{Given $p\in\PD$, compute $H_D\bmod p$ as follows:}
\algitem
Search for a curve $E$ with $j(E)\in\Ellt(\Fp)$ (Algorithm~1.1).
\algitem
Find an isogenous $E'$ with $j(E')\in\EllD(\Fp)$ (Algorithm~1.2).
\algitem
Enumerate $\EllD(\Fp)$ from $j(E')$ via the action of $\clD$ (Algorithm~1.3).
\algitem
Compute $H_D\bmod p$ as $H_D(X) = \prod_{j\in\EllD(\Fp)}(X-j)$.
\algend

Algorithm~1.1 searches for $j(E)\in\Ellt(\Fp)$ by sampling random curves and testing whether they have trace $t$ (or $-t$).  To accelerate this process, we sample a family of curves whose orders are divisible by $m$, for some suitable $m|(p+1\pm t)$.  We select $p\in\PD$ to ensure that such an $m$ exists, and also to maximize the size of $\Ellt(\Fp)$ relative to $\Fp$ (with substantial benefit).

To compute the isogenies required by Algorithms 1.2 and 1.3 we use the classical modular polynomial $\Phi_N\in\Z[X,Y]$, which parametrizes elliptic curves connected by a cyclic isogeny of degree $N$.  For a prime $\ell\ne p$ and an elliptic curve $E/\Fp$, the roots of $\Phi_\ell(X,j(E))$ over $\Fp$ are the $j$-invariants of all curves $E'/\Fp$ connected to~$E$ via an isogeny of degree $\ell$ (an $\ell$-isogeny) \cite[Thm.~12.19]{Washington:EllipticCurves}.  This gives us a computationally explicit way to define the graph of $\ell$-isogenies on the set $\Ellt(\Fp)$.

As shown by Kohel \cite{Kohel:thesis}, the connected components of this graph all have a particular shape, aptly described in \cite{Fouquet:IsogenyVolcanoes} as a \emph{volcano} (see Figure~1 in Section~\ref{section:IsogenyVolcanoes}).  The curves in an isogeny volcano are naturally partitioned into one or more levels, according to their endomorphism rings, with the curves at the top level forming a cycle.  Given an element of $\Ellt(\Fp)$, Algorithm~1.2 finds an element of $\EllD(\Fp)$ by climbing a series of isogeny volcanoes.  Given an element of $\EllD(\Fp)$, Algorithm~1.3 enumerates the entire set by walking along $\ell$-isogeny cycles for various values of $\ell$.
\smallskip

We now suppose we have computed $H_D$ modulo primes $p_1,\ldots,p_n$ and consider how to compute $H_D\bmod P$ for an arbitrary positive integer $P$,
using the Chinese Remainder Theorem.  In order to do so, we need an explicit bound $B$ on the largest coefficient of $H_D$ (in absolute value).  Lemma~\ref{lemma:Bbound} of Appendix~1 provides such a $B$, and it satisfies $\log B = O(|D|^{1/2+\epsilon})$.

Let $M=\prod p_i$, $M_i=M/p_i$ and $a_i \equiv M_i^{-1} \bmod p_i$.  Suppose $c\in \Z$ is a coefficient of $H_D$.  We know the values $c_i\equiv c\bmod p_i$ and wish to compute $c\bmod P$ for some positive integer $P$.
We have
\begin{equation}\label{equation:CRT}
c \equiv \sum c_ia_iM_i \bmod M,
\end{equation}
and if $M>2B$ we can uniquely determine $c$.  This is the usual CRT approach.

Alternatively, if $M$ is slightly larger, say $M>4B$, we may apply the explicit CRT (mod $P$) \cite[Thm.~3.1]{Bernstein:ModularExponentiation}, and compute $c\bmod P$ directly via
\begin{equation}\label{equation:explicitCRT}
c \equiv \sum c_ia_iM_i-rM \bmod P.
\end{equation}
Here $r$ is the nearest integer to $\sum c_ia_i/p_i$.  When computing $r$ it suffices to approximate each rational number $c_ia_i/p_i$ to within $1/(4n)$.

As noted in \cite{Enge:FloatingPoint}, even when $P$ is small one still has to compute $H_D\bmod p_i$ for enough primes to determine $H_D$ over $\Z$, so the work required is essentially the same.  The total size of the $c_i$ over all the coefficients is necessarily as big as $H_D$.

However, instead of applying the explicit CRT at the end of the computation, we update the sums $\sum c_ia_iM_i\bmod P$ and $\sum c_ia_i/p_i$ as each $c_i$ is computed and immediately discard $c_i$.  This \emph{online} approach reduces the space required.

We now give the complete algorithm to compute $H_D\bmod P$.  When $P$ is large
we alter the CRT approach slightly as described in Section 7.  This allows us to efficiently treat all $P$, including $P=M$, which is used to compute $H_D$ over $\Z$.

\algstart{{\bf 2}}{Compute $H_D\bmod P$ as follows:}
\algitem
Select primes $p_1,\ldots,p_n\in\PD$ with $\prod p_i > 4B$ (Algorithm~2.1).
\algitem
Compute suitable presentations of $\clD$ (Algorithm~2.2).
\algitem
Perform CRT precomputation (Algorithm~2.3).
\algitem
For each $p_i$:
\begin{enumerate}
\item
Compute the coefficients of $H_D\bmod p_i$ (Algorithm~1).
\item
Update CRT sums for each coefficient of $H_D$ (Algorithm~2.4).
\end{enumerate}
\algitem
Recover the coefficients of $H_D\bmod P$ (Algorithm~2.5).
\algend

The presentations computed by Algorithm~2.2 are used by Algorithm~1.3 to realize the action of the class group.
The optimal presentation may vary with~$p_i$ (more precisely, $v(p_i)$), but often the same presentation is used for every~$p_i$.
Each presentation specifies a sequence of primes $\ell_1,\ldots,\ell_k$ corresponding to a sequence $\alpha_1,\ldots,\alpha_k$ of generators for $\clD$ in which each $\alpha_i$ contains an ideal of norm $\ell_i$.  There is an associated sequence of integers $r_1,\ldots,r_k$ with the property that every $\beta\in\cl(D)$ can be expressed uniquely in the form
\begin{equation*}
\beta = \alpha_1^{x_1}\cdots\alpha_k^{x_k},
\end{equation*}
with $0\le x_i < r_i$.  Algorithm~1.3 uses isogenies of degrees $\ell_1,\ldots,\ell_k$ to enumerate $\EllD(\Fp)$.  Given the large size of $\Phi_\ell(X,Y)$, roughly $O(\ell^3\log \ell)$ bits \cite{CohenPaula:ModularPolynomials}, it is critical that the $\ell_i$ are as small as possible.  We achieve this by computing an optimal \emph{polycyclic presentation} for $\clD$, derived from a sequence of generators for $\cl(D)$.  Under the Extended Reimann Hypothesis (ERH) we have $\ell_i\le 6\log^2|D|$, by \cite{Bach:ERHBounds}.  This approach corrects an error in \cite{Belding:HilbertClassPolynomial} which relies on a \emph{basis} for $\clD$ and fails to achieve such a bound (see Section \ref{section:PCPvsBAsis} for a counterexample).
\smallskip

The rest of this paper is organized as follows:
\begin{itemize}
\item
Section \ref{section:FindingCurves} describes how we find a curve with trace $\pm t$ (Algorithm~1.1),\\ and how the primes $p_1,\ldots,p_n$ are selected (Algorithm~2.1).
\item
Section \ref{section:IsogenyVolcanoes} discusses isogeny volcanoes (Algorithms 1.2 and 1.3).
\item
Section \ref{section:PolycyclicPresentation} defines an optimal polycyclic presentation of $\clD$,\\ and gives an algorithm to compute one (Algorithm~2.2).
\item
Section \ref{section:CRT} addresses the CRT computations (Algorithms~2.3, 2.4, and 2.5).
\item
Section \ref{section:Complexity} contains a complexity analysis and proves Theorem~1.
\item
Section \ref{section:Performance} provides computational results.
\end{itemize}
Included in Section \ref{section:Performance} are timings obtained while constructing pairing-friendly curves of prime order over finite fields of cryptographic size.

\section{Finding an Elliptic Curve With a Given Number of Points}\label{section:FindingCurves}

Given a prime $p$ and a positive integer $t<2\sqrt{p}$, we seek an element of $\Ellt(\Fp)$, equivalently, an elliptic curve $E/\Fp$ with either $N_0=p+1-t$ or $N_1=p+1+t$ points.  This is essentially the problem considered in the introduction, but since we do not yet know $H_D$, we cannot apply the CM method.

Instead, we generate curves at random and test whether $\#E\in\{N_0,N_1\}$, where $\#E$ is the cardinality of the group $E(\Fp)$.  This test takes very little time, given the prime factorizations of $N_0$ and $N_1$, and does not require computing $\#E$.  However, in the absence of any optimizations we expect to test many curves: $2\sqrt{p}+O(1)$, on average, for fixed $p$ and varying $t$.  Factoring $N_0$ and $N_1$ is easy by comparison.

For the CRT-based algorithm in \cite{Belding:HilbertClassPolynomial}, searching for elements of $\Ellt(\Fp)$ dominates the computation.  In the example given there, this single step takes more than 50 times as long as the entire computation of $H_D$ using the floating-point method of~\cite{Enge:FloatingPoint}.  We address this problem here in detail, giving both asymptotic and constant factor improvements.  In aggregate, the improvements we suggest can reduce the time to find an element of $\Ellt(\Fp)$ by a factor of over 100; under the heuristic analysis of Section \ref{subsection:heuristics} this is no longer the asymptotically dominant step.

These improvements are enabled by a careful selection of primes $p\in\PD$, which is described in Section~\ref{subsection:PickPrimes}.  Contrary to what one might assume, the smallest primes in $\PD$ are not necessarily the best choices.  The expected time to find an element of $\Ellt(\Fp)$ can vary dramatically from one prime to the next, especially when one considers optimizations whose applicability may depend on $N_0$ and $N_1$.  In order to motivate our selection criteria, we first consider how we may narrow the search by our choice of~$p$, which determines $t=t(p)$ and therefore $N_0$ and $N_1$.

\subsection{The density of curves with trace $\boldsymbol{\pm t}$}\label{subsection:Density}
We may compute the density of $\Ellt(\Fp)$ as a subset of $\Fp$ via a formula of Deuring \cite{Deuring:CountingEC}.  For convenience we define
\begin{equation}\label{equation:density}
\rho(p,t) = \frac{H(4p-t^2)}{p} \approx \frac{\#\Ellt(\Fp)}{\#\Fp},
\end{equation}
where $H(4p-t^2)$ is the Hurwitz class number (as in \cite[Def.~5.3.6]{Cohen:CANT} or \cite[p.~319]{Cox:ComplexMultiplication}).  A more precise formula uses weighted cardinalities, but the difference is negligible, see
\cite[Thm.~14.18]{Cox:ComplexMultiplication} or \cite{Lenstra:ECM} for further details.

We expect to sample approximately $1/\rho(p,t)$ random curves over $\Fp$ in order to find one with trace $\pm t$.  When selecting primes $p\in\PD$, we may give preference to primes with larger $\rho$-values.  Doing so typically increase the average density by a factor of 3 or 4, compared to simply using the smallest primes in $\PD$.  It also makes $N_0$ and $N_1$ more likely to be divisible by small primes, which interacts favorably with the optimizations of the next section.

Using primes with large $\rho$-values improves the asymptotic results of Section~\ref{section:Complexity} by an $O(\log|D|)$ factor.  Effectively, we force the size of $\Ellt(\Fp)$ to increase with~$p$, even though the size of $\EllD(\Fp)$ is fixed at $h(D)$.  This process tends to favor primes in $\PD$ for which $v(p)$ has many small factors, something we must consider when enumerating $\EllD(\Fp)$ in Algorithm~1.3.

\subsection{Families with prescribed torsion}\label{subsection:PrescribedTorsion}
In addition to increasing the density of $\Ellt(\Fp)$ relative to $\Fp$, we can further accelerate our random search by sampling a subset of $\Fp$ in which $\Ellt(\Fp)$ has even greater density.  Specifically, we may restrict our search to a family of curves whose order is divisible by $m$, for some small $m$ dividing $N_0$ or $N_1$ (ideally both).  We have some control over $N_0$ and $N_1$ via our choice of $p\in\PD$, and in practice we find we can easily arrange for $N_0$ or $N_1$ to be divisible by a suitable $m$, discarding only a constant fraction of the primes in $\PD$ we might otherwise consider (making the primes we do use slightly larger).

To generate a curve whose order is divisible by $m$, we select a random point on $Y_1(m)/\Fp$ and construct the corresponding elliptic curve.
Here $Y_1(m)$ is the affine subcurve of the modular curve $X_1(m)$, which parametrizes elliptic curves with a point of order $m$.
We do this using plane models $F_m(r,s)=0$ that have been optimized for this purpose, see \cite{Sutherland:PrescribedTorsion}.
For $m$ in the set $\{2,3,4,5,6,7,8,9,10,12\}$, the curve $X_1(m)$ has genus 0, and we obtain Kubert's parametrizations \cite{Kubert:Torsion} of elliptic curves with a prescribed (cyclic) torsion subgroup over $\Q$.  Working in $\Fp$, we may use any $m$ not divisible $p$, although we typically use $m \le 40$, due to the cost of finding points on $F_m(r,s)=0$.

We augment this approach with additional torsion constraints that can be quickly computed.  For example, to generate a curve containing a point of order 132, it is much faster to generate several curves using $X_1(11)$ and apply tests for 3 and 4 torsion to each than it is to use $X_1(132)$.  A table of particularly effective combinations of torsion constraints, ranked by cost/benefit ratio, appears in Appendix~2.

The cost of finding points on $F_m(r,s)=0$ is negligible when $m$ is small, but grows with the genus (more precisely, the gonality) of $X_1(m)$, which is $O(m^2)$, by \cite[Thm.~1.1]{Jeon:ArithmeticModularCurves}.  For $m < 23$ the gonality is at most 4 (see Table~5 in \cite{Sutherland:PrescribedTorsion}), and points on $F_m(r,s)$ can be found quite quickly (especially when the genus is 0 or 1).

Provided that we select suitable primes from $\PD$, generating curves with prescribed torsion typically improves performance by a factor of 10 to 20.

\subsection{Selecting suitable primes}\label{subsection:PickPrimes}
We wish to select primes in $\mathcal{P}_D$ that maximize the benefit of the optimizations considered in Sections \ref{subsection:Density} and \ref{subsection:PrescribedTorsion}.
Our strategy is to enumerate a set of primes
\begin{equation}\label{equation:Sz}
S_z=\{p\in\mathcal{P}_D:1/\rho(p,t(p)) \le z\}
\end{equation}
that is larger than we need, and to then select a subset $S\subset S_z$ of the ``best" primes.  We require that $S$ be large enough to satisfy
$$
\sum_{p\in S}\lg p > b = \lg{B}+2,
$$
where $B$ is a bound on the coefficients of $H_D(X)$, obtained via Lemma~\ref{lemma:Bbound}, and ``$\lg$" denotes the binary logarithm.
We typically seek to make $S_z$ roughly 2 to 4 times the size of $S$, starting with a nominal value for $z$ and increasing it as required.

To enumerate $S_z$ we first note that if $4p=t^2-v^2D$ for some $p\in S_z$, then
$$\frac{1}{\rho(p,t)}=\frac{p}{H(4p-t^2)}=\frac{p}{H(-v^2D)}\le z.$$
Hence for a given $v$, we may bound the $p\in S_z$ with $v(p)=v$ by
\begin{equation}\label{equation:pbound}
p\le zH(-v^2D).
\end{equation}
To find such primes, we seek $t$ for which $p=(t^2-v^2D)/4$ is a prime satisfying~(\ref{equation:pbound}).  This is efficiently accomplished by \emph{sieving} the polynomial $t^2-v^2D$, see \cite[\S 3.2.6]{Crandall:PrimeNumbers}.
To bound $v=v(p)$ for $p\in S_z$, we note that $p > -v^2D/4$, hence
\begin{equation}\label{equation:vbound}
-v^2D < 4zH(-v^2D).
\end{equation}
For fixed $z$, this inequality will fail once $v$ becomes too large.  If we have
\begin{equation}\label{equation:v0bound}
\frac{v}{(\log\log(v+4))^2} \sge \frac{44zH(-D)}{-D},
\end{equation}
then (\ref{equation:vbound}) cannot hold, by Lemma~\ref{lemma:vbound} of Appendix~1.

\subsection*{Example}
Consider the construction of $S_z$ for $D=-108708$, for which we have $H(-D)=h(D)=100$.  We initially set $z$ to $-D/(2H(-D))\approx 543$.  For $v=1$ this yields the interval $[-v^2D/4,zH(-v^2D)]=[-D/4,-D/2]=[27177,54354]$, which we search for primes of the form $(t^2-D)/4$ by sieving $t^2-D$ with $t\le\sqrt{-2D}$, finding
17 such primes.  For $v=2$ we have $H(-v^2D)=300$ and search the interval $[-D,-3D/2]=[108708,163062]$ for primes of the form $(t^2-4D)/4$, finding 24 of them.  For $v=3$ we have $H(-v^2D)=400$ and the interval $[-9D/4,-2D]$ is empty.  The interval is also empty for $3<v<39$, and (\ref{equation:v0bound}) applies to all $v\ge 39$.

At this point $S_z$ is not sufficiently large, so we increase $z$, say by 50\%, obtaining $z \approx 814$.  This expands the intervals for $v=1,2$ and gives nonempty intervals for $v=3,4$, and we find an additional 74 primes.  Increasing $z$ twice more, we eventually reach $z\approx 1831$, at which point $S_z$ contains 598 primes with total size around 11911 bits.  This is more than twice $b=\lg{B}+2\approx 5943$, so we stop.  The largest prime in $S_z$ is $p=5121289$, with $v(p)=12$.
\smallskip

Once $S_z$ has been computed, we select $S\subset S_z$ by ranking the primes $p\in S_z$ according to their cost/benefit ratio.  The cost is the expected time to find a curve in $\Ellt(\Fp)$, taking into account the density $\rho(p,t)$ and the $m$-torsion constraints applicable to $N_0$ and $N_1$, and the benefit is $\lg p$, the number of bits in $p$.  Only a small set of torsion constraints are worth considering, and a table of these may be precomputed.  See Appendix~2 for further details.

The procedure for selecting primes is summarized below.  We assume that $h(D)$ has been obtained in the process of determining $B$ and $b=\lg{B}+2$, which allows $H(-D)$ and $\rho(p,t)$ to be easily computed (see (\ref{equation:vb1}) and (\ref{equation:vb2}) in Appendix~1).

\algstart{{\bf 2.1}}{Given $D$, $b$, and parameters $k>1$, $\delta>0$, select $S\subset\mathcal{P}_D$:}
\algitem
Let $z= -D/(2H(-D))$.
\algitem
Compute $S_z=\{p\in\mathcal{P}_D:1/\rho(p,t(p)) \le z\}$.
\algitem
If $\sum_{p\in S_z}\lg{p} \le kb$, then set $z\leftarrow (1+\delta)z$ and go to Step 2.
\algitem
Rank the primes in $S_z$ by increasing cost/benefit ratio as $p_1,\ldots,p_{n_z}$.
\algitem
Let $S=\{p_1,\ldots,p_n\}$, with $n\le n_z$ minimal subject to $\sum_{p\in S}\lg p > b$.
\algend
\vspace{-3pt}

In Step 3 we typically use $k=2$ or $k=4$ (a larger $k$ may find better primes), and $\delta=1/2$.  The complexity of Algorithm~2.1 is analyzed in Section \ref{section:Complexity}, where it is shown to run in expected time $O(|D|^{1/2+\epsilon})$, under the GRH (Lemma~\ref{lemma:sieve}).  This is negligible compared to the total complexity of $O(|D|^{1+\epsilon})$ and very fast in practice.  

In the $D=-108708$ example above, Algorithm~2.1 selects 313 primes in $S_z$, the largest of which is $p=4382713$, with $v=12$ and $t=1370$.  This largest prime is actually a rather good choice, due to the torsion constraints that may be applied to $N_0=p+1-t$, which is divisible by 3, 4, and~11.  We expect to test the orders of fewer than 40 curves for this prime, and on average need to test about 60 curves for each prime in $S$, fewer than 20,000 in all.

For comparison, the example in \cite[p.~294]{Belding:HilbertClassPolynomial} uses the least 324 primes in $\PD$,
the largest of which is only 956929, but nearly 500,000 curves are tested, over 1500 per prime.  The difference in running times is even greater, 0.2 seconds versus 18.5 seconds, due to optimizations in the testing algorithm of the next section.
\vspace{-3pt}

\subsection{Testing curves}\label{subsection:TestingCurves}

When $p$ is large, the vast majority of the random curves we generate will not have trace $\pm t$, even after applying the optimizations above.  To quickly filter a batch of, say, 50 or 100 curves, we pick a random point $P$ on each curve and simultaneously compute $(p+1)P$ and $tP$.
Here we apply standard multi-exponentiation techniques to scalar multiplication in $E(\Fp)$, using a precomputed NAF representation, see \cite[Ch.~9]{Cohen:HECHECC}.  We perform the group operations in parallel to minimize the cost of field inversions, using affine coordinates as in \cite[\S 4.1]{KedlayaSutherland:HyperellipticLSeries}.  We then test whether $(p+1)P=\pm tP$, as suggested in \cite{Belding:HilbertClassPolynomial}, and if this fails to hold we reject the curve, since its order cannot be $p+1\pm t$.

To each curve that passes this test, we apply the algorithm \textsc{TestCurveOrder}.
In the description below, $\mathcal{H}_p=[p+1-2\sqrt{p},p+1+2\sqrt{p}]$ denotes the Hasse interval, and the index $s\in\{0,1\}$ is used to alternate between $E$ and its quadratic twist $\tilde{E}$.

\algstart{\textsc{TestCurveOrder}}{Given an elliptic curve $E/\Fp$ and factored integers $N_0,N_1\in\mathcal{H}_p$ with $N_0< N_1$ and $N_0+N_1=2p+2${\rm :}}
\algitem
If $p\le 11$, return {\bf true} if $\#E\in\{N_0,N_1\}$ and {\bf false} otherwise.
\algitem
Set $E_0\leftarrow E$, $E_1\leftarrow \tilde{E}$, $m_0\leftarrow 1$, $m_1\leftarrow 1$, and $s\leftarrow 0$.
\algitem
Select a random point $P\in E_s(\Fp)$.
\algitem
Use \textsc{FastOrder} to compute the order $n_s$ of the point $Q=m_sP$, assuming $n_s$ divides $N_s/m_s$.
If this succeeds, set $m_s\leftarrow m_sn_s$ and proceed to Step 5.
If not, provided that $m_0|N_1$, $m_1|N_0$, and  $N_0 < N_1$, swap $N_0$ and $N_1$ and go to Step 3, but
otherwise return {\bf false}.
\algitem
Set $a_1\leftarrow 2p+2\bmod m_1$ and $\mathcal{N}\leftarrow \{m_0x:x\in\Z\}\cap\{m_1x+a_1:x\in\Z\}\cap\mathcal{H}_p$.\\
If $\mathcal{N}\subseteq\{N_0,N_1\}$ return {\bf true}, otherwise set $s\leftarrow 1-s$ and go to Step 3.
\algend

\textsc{TestCurveOrder} computes integers $m_s$ dividing $\#E_s$ by alternately computing the orders of random points on $E$ and $\tilde{E}$. If an order computation fails (this happens when $n_s\nmid N_s/m_s$), it rules out $N_s$ as a possibility for $\#E$.  If both $N_0$ and $N_1$ are eliminated, the algorithm returns {\bf false}.  Otherwise a divisor $n_s$ of $N_s$ is obtained and the algorithm continues until it narrows the possibilities for $\#E$ to a nonempty subset of $\{N_0,N_1\}$ (it need not determine which).  
The set $\mathcal{N}$ computed in Step 5 must contain $\#E$, since $m_0$ divides $\#E$ and $m_1$ divides $\#\tilde{E}$ (the latter implies $\#E\equiv 2p+2\bmod m_1$, since $\#E+\#\tilde{E}=2p+2$).  The complexity of the algorithm (and a proof that it terminates) is given by Lemma~\ref{lemma:TestCurve} of Section~\ref{section:Complexity}.
\smallskip

A simple implementation of \textsc{FastOrder} appears below, based on a recursive algorithm to compute the order of a generic group element due to Celler and Leedham-Green \cite{Celler:GLOrder}.
By convention, generic groups are written multiplicatively, and we do so here, although we apply \textsc{FastOrder} to the additive groups $E(\Fp)$ and $\tilde{E}(\Fp)$. The function $\omega(N)$ counts the distinct prime factors of $N$.

\algstart{\textsc{FastOrder}}{Given an element $\alpha$ of a generic group $G$ and a factored integer $N$, compute the  function $\mathcal{A}(\alpha,N)$, defined to be the factored integer $M=|\alpha|$ when $M$ divides $N$, and $0$ otherwise.}
\algitem
If $N$ is a prime power $p^n$, compute $\alpha^{p^i}$ for increasing $i$ until the identity is reached (in which case return $p^i$), or $i=n$ (in which case return 0).
\algitem
Let $N=N_1N_2$ with $N_1$ and $N_2$ coprime and $|\omega(N_1)-\omega(N_2)|\le 1$.\\
Recursively compute $M=\mathcal{A}(\alpha^{N_2},N_1)\cdot\mathcal{A}(\alpha^{N_1},N_2)$ and return $M$.
\algend
\noindent
This algorithm uses $O(\log{N}\log\log{N})$ multiplications (and identity tests) in $G$. A slightly faster algorithm  \cite[Alg.~7.4]{Sutherland:Thesis} is used in the proof of Theorem~1.   
In practice, the implementation of \textsc{TestCurveOrder} and \textsc{FastOrder} is not critical, since most of the time is actually spent performing the scalar multiplications discussed above (these occur in Step 3 of Algorithm 1.1 below).
\smallskip

We now give the complete algorithm to find an element of $\Ellt(\Fp)$.
For reasons discussed in the next section, we exclude the $j$-invariants 0 and 1728.

\algstart{{\bf 1.1}}{Given $p \in \PD$, find $j\in\Ellt(\Fp)-\{0,1728\}$.}
\algitem
Factor $N_0=p+1-t$ and $N_1=p+1+t$, and choose torsion constraints.
\algitem
Generate a batch of random elliptic curves $E_i/\Fp$ with $j(E_i)\notin\{0,1728\}$ that satisfy these constraints and pick a random point $P_i$ on each curve.
\algitem
For each $i$ with $(p+1)P_i=\pm tP_i$, test whether $\#E_i\in \{N_0,N_1\}$ by calling \textsc{TestCurveOrder}, using the factorizations of $N_0$ and $N_1$.
\algitem
If $\#E_i\in\{N_0,N_1\}$ for some $i$, output $j(E_i)$, otherwise return to Step 2.
\algend
\noindent
The torsion constraints chosen in Step 1 may be precomputed by Algorithm~2.1 in the process of selecting $S\subset\PD$.  In Step 2 we may generate $E_i$ with $m$-torsion as described in Section \ref{subsection:PrescribedTorsion}; as a practical optimization, if $X_1(m)$ has genus 0 we generate both $E_i$ and $P_i$ using the parametrizations in \cite{Atkin:ECMCurves}.  In Step 3 the point~$P_i$ can also be used as the first random point chosen in \textsc{TestCurveOrder}.  The condition $(p+1)P_i=\pm tP_i$ is tested by performing scalar multiplications in parallel, as described above;
when torsion constraints determine the sign of $t$, we instead test whether $(p+1-t)P_i=0$ or $(p+1+t)P_i=0$, as appropriate.

\section{Isogeny Volcanoes}\label{section:IsogenyVolcanoes}
The previous section addressed the first step in computing $H_D\bmod p$: finding an element of $\Ellt(\Fp)$.
In this section we address the next two steps: finding an element of $\EllD(\Fp)$ and enumerating $\EllD(\Fp)$.  This yields the roots of $H_D\bmod p$.  We utilize the graph of $\ell$-isogenies defined on $\Ellt(\Fp)$.  We regard this as an undirected graph, noting that the dual isogeny \cite[\S III.6]{Silverman:EllipticCurves1} lets us traverse edges in either direction.  We permit self-loops in our graphs but not multiple edges.

\begin{definition}
Let $\ell$ be prime.  An $\ell$-\emph{volcano} is an undirected graph with vertices partitioned into levels $V_0,\ldots,V_d$, in which the subgraph on $V_0$ (the \emph{surface}) is a regular connected graph of degree at most 2, and also:
\begin{enumerate}
\item
For $i>0$, each vertex in $V_i$ has exactly one edge leading to a vertex in $V_{i-1}$,\\
and every edge not on the surface is of this form.
\item
For $i<d$, each vertex in $V_i$ has degree $\ell+1$.
\end{enumerate}
\end{definition}

The surface $V_0$ of an $\ell$-volcano is either a single vertex (possibly with a self-loop), two vertices connected by an edge, or a (simple) cycle on more than two vertices, which is the typical case.  We call $V_d$ the \emph{floor} of the volcano, which coincides with the surface when $d=0$.  For $d>0$ the vertices on the floor have degree 1, and in every case their degree is at most 2; all other vertices have degree $\ell+1 > 2$.

We refer to $d$ as the \emph{depth} of the $\ell$-volcano.  The term ``height" is also used \cite{Miret:VolcanoHeight}, but ``depth" better suits our indexing of the levels $V_i$ and is consistent with \cite{Kohel:thesis}.
\medskip

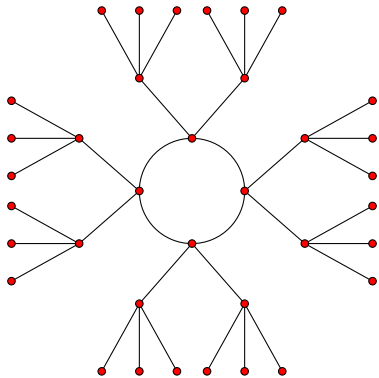
\begin{figure}[htp]
\centering
\begin{tikzpicture}
\draw (0,0) circle (0.7);
\draw[black] (0,-0.7) -- (-0.7,-1.5);
\draw[black] (0,-0.7) -- (0.7,-1.5);
\draw[black] (0,0.7) -- (-0.7,1.5);
\draw[black] (0,0.7) -- (0.7,1.5);
\draw[black] (-0.7,0) -- (-1.5,-0.7);
\draw[black] (-0.7,0) -- (-1.5,0.7);
\draw[black] (0.7,0) -- (1.5,-0.7);
\draw[black] (0.7,0) -- (1.5,0.7);
\draw[black] (-0.7,-1.5) -- (-0.2,-2.4);
\draw[black] (-0.7,-1.5) -- (-0.7,-2.4);
\draw[black] (-0.7,-1.5) -- (-1.2,-2.4);
\draw[black] (-0.7,1.5) -- (-0.2,2.4);
\draw[black] (-0.7,1.5) -- (-0.7,2.4);
\draw[black] (-0.7,1.5) -- (-1.2,2.4);
\draw[black] (0.7,-1.5) -- (0.2,-2.4);
\draw[black] (0.7,-1.5) -- (0.7,-2.4);
\draw[black] (0.7,-1.5) -- (1.2,-2.4);
\draw[black] (0.7,1.5) -- (0.2,2.4);
\draw[black] (0.7,1.5) -- (0.7,2.4);
\draw[black] (0.7,1.5) -- (1.2,2.4);
\draw[black] (-1.5,-0.7) -- (-2.4,-0.2);
\draw[black] (-1.5,-0.7) -- (-2.4,-0.7);
\draw[black] (-1.5,-0.7) -- (-2.4,-1.2);
\draw[black] (-1.5,0.7) -- (-2.4,0.2);
\draw[black] (-1.5,0.7) -- (-2.4,0.7);
\draw[black] (-1.5,0.7) -- (-2.4,1.2);
\draw[black] (1.5,-0.7) -- (2.4,-0.2);
\draw[black] (1.5,-0.7) -- (2.4,-0.7);
\draw[black] (1.5,-0.7) -- (2.4,-1.2);
\draw[black] (1.5,0.7) -- (2.4,0.2);
\draw[black] (1.5,0.7) -- (2.4,0.7);
\draw[black] (1.5,0.7) -- (2.4,1.2);
\draw[fill=red] (0,0.7) circle (0.05);
\draw[fill=red] (0,-0.7) circle (0.05);
\draw[fill=red] (0.7,0) circle (0.05);
\draw[fill=red] (-0.7,0) circle (0.05);
\draw[fill=red] (-0.7,-1.5) circle (0.05);
\draw[fill=red] (-0.7,1.5) circle (0.05);
\draw[fill=red] (0.7,-1.5) circle (0.05);
\draw[fill=red] (0.7,1.5) circle (0.05);
\draw[fill=red] (-1.5,-0.7) circle (0.05);
\draw[fill=red] (-1.5,0.7) circle (0.05);
\draw[fill=red] (1.5,-0.7) circle (0.05);
\draw[fill=red] (1.5,0.7) circle (0.05);
\draw[fill=red] (-2.4,-0.2) circle (0.05);
\draw[fill=red] (-2.4,-0.7) circle (0.05);
\draw[fill=red] (-2.4,-1.2) circle (0.05);
\draw[fill=red] (-2.4,0.2) circle (0.05);
\draw[fill=red] (-2.4,0.7) circle (0.05);
\draw[fill=red] (-2.4,1.2) circle (0.05);
\draw[fill=red] (2.4,-0.2) circle (0.05);
\draw[fill=red] (2.4,-0.7) circle (0.05);
\draw[fill=red] (2.4,-1.2) circle (0.05);
\draw[fill=red] (2.4,0.2) circle (0.05);
\draw[fill=red] (2.4,0.7) circle (0.05);
\draw[fill=red] (2.4,1.2) circle (0.05);
\draw[fill=red] (-0.2,-2.4) circle (0.05);
\draw[fill=red] (-0.7,-2.4) circle (0.05);
\draw[fill=red] (-1.2,-2.4) circle (0.05);
\draw[fill=red] (0.2,-2.4) circle (0.05);
\draw[fill=red] (0.7,-2.4) circle (0.05);
\draw[fill=red] (1.2,-2.4) circle (0.05);
\draw[fill=red] (-0.2,2.4) circle (0.05);
\draw[fill=red] (-0.7,2.4) circle (0.05);
\draw[fill=red] (-1.2,2.4) circle (0.05);
\draw[fill=red] (0.2,2.4) circle (0.05);
\draw[fill=red] (0.7,2.4) circle (0.05);
\draw[fill=red] (1.2,2.4) circle (0.05);
\end{tikzpicture}
\caption{A 3-volcano of depth 2, with a 4-cycle on the surface.}
\end{figure}
\vspace{-6pt}

\begin{definition}
For a prime $\ell\ne p$, let $\Glt(\Fp)$ be the undirected graph with vertex set $\Ellt(\Fp)$ that contains the edge $(j_1,j_2)$ if and only if $\Phi_\ell(j_1,j_2)=0$.
\end{definition}

Here $\Phi_\ell$ denotes the classical modular polynomial.  With at most two exceptions, the components of $\Glt(\Fp)$ are $\ell$-volcanoes.
The level at which $j(E)\in\Ellt(\Fp)$ resides in its $\ell$-volcano is determined by the power of $\ell$ dividing the conductor of $\End(E)$.

The discriminant $D$ may be written as $D=u^2D_K$, where $D_K$ is the discriminant of the maximal order $\O_K$ containing $\O$, and $u=[\O_K:\O]$ is the conductor of $\O$.  We also have the discriminant
\begin{equation}
D_\pi=t^2-4p=v^2D=w^2D_K
\end{equation}
of the order $\Z[\pi]\subseteq\O_K$ with conductor $w=uv$, generated by the Frobenius
endomorphism $\pi$ with trace $t$ (note $\pi=\pi_E$ for all $j(E)\in \Ellt(\Fp)$).
The order $\O$ contains $\Z[\pi]$, and for any $j(E)\in\Ellt(\Fp)$ we have $\Z[\pi]\subseteq\End(E)\subseteq\O_K$.
Curves with $\End(E)\cong\Z[\pi]$ lie on the floor of their $\ell$-volcano, while those with $\End(E)\cong\O_K$ lie on the surface.  More generally, the following proposition holds.

\begin{proposition}\label{prop:volcano}
Let $p\in\mathcal{P}_D$ and let $\ell\ne p$ be a prime.  The components of $\Glt(\Fp)$ that do not contain $j=0,1728$ are $\ell$-volcanoes of depth $d=\nu_\ell(w)$.  Each has an associated order~$\O_0$, with $\Z[\pi]\subseteq\O_0\subseteq\O_K$ and $\ell\nmid[\O_K:\O_0]$, and we have
$$j(E)\in V_i \quad\Longleftrightarrow\quad\End(E)\cong\O_i,$$
where $\O_i$ is the order of index $\ell^i$ in $\O_0$.
\end{proposition}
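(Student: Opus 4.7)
The plan is to work out the local structure of $\ell$-isogenies at a single vertex and then assemble the levels globally. Fix a vertex $j(E)$ in a component $C$ avoiding $0$ and $1728$, and write $\mathcal{O}=\End(E)$; this is an order with $\mathbb{Z}[\pi]\subseteq\mathcal{O}\subseteq\mathcal{O}_K$, so its conductor has the form $\ell^c f'$ for some $c\le d=\nu_\ell(w)$ and some $f'$ coprime to $\ell$. The edges out of $j(E)$ in $\Gamma_{\ell,t}(\mathbb{F}_p)$ correspond to the distinct $\mathbb{F}_p$-rational cyclic subgroups of $E[\ell]$, and I will classify them by how the endomorphism ring of the target changes.

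The core local fact (Kohel) is that an $\ell$-isogeny $\phi\colon E\to E'$ has $\End(E')$ again among the orders $\mathbb{Z}[\pi]\subseteq\mathcal{O}'\subseteq\mathcal{O}_K$, and $\mathcal{O}'$ differs from $\mathcal{O}$ only in the $\ell$-part of its conductor, by at most one power of $\ell$; this separates edges into \emph{horizontal}, \emph{ascending}, and \emph{descending} types. I would then prove the standard count: when $c=0$ there are $1+\bigl(\frac{D_K}{\ell}\bigr)$ horizontal edges and no ascending edge, with the remaining $\ell-\bigl(\frac{D_K}{\ell}\bigr)$ descending (present precisely when $d\ge 1$); when $c\ge 1$ there is exactly one ascending edge and, if $c<d$, also $\ell$ descending edges, with no horizontal edges. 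The case $c=0$ follows from the bijection between $\ell$-isogenies out of $E$ and invertible $\mathcal{O}$-ideals of norm $\ell$, of which $1+\bigl(\frac{D_K}{\ell}\bigr)$ are horizontal and the rest descend. The case $c\ge 1$ is the technical heart: $\mathcal{O}$ is non-maximal at $\ell$, its $\ell$-ideals fail to be invertible, and the clean ideal-theoretic count breaks down; here I would either analyze $E[\ell]$ as a module for $\mathcal{O}\otimes\mathbb{Z}/\ell\mathbb{Z}$ with its $\pi$-action, or lift via Deuring and enumerate order-$\ell$ sublattices of $\mathbb{C}/\mathcal{O}$ that are stable under Frobenius.

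With the local picture in hand, I would let $\mathcal{O}_0$ be the order of smallest conductor occurring as an endomorphism ring at some vertex of $C$, and argue $\ell\nmid[\mathcal{O}_K:\mathcal{O}_0]$: otherwise the unique ascending edge out of such a vertex would land in $C$ at an order with strictly smaller conductor, a contradiction. Defining $\mathcal{O}_i$ to be the unique order of index $\ell^i$ in $\mathcal{O}_0$ (which contains $\mathbb{Z}[\pi]$ exactly when $i\le d$) and $V_i=\{j(E)\in C:\End(E)\cong\mathcal{O}_i\}$, the local counts verify the volcano axioms directly: each $j(E)\in V_i$ with $i>0$ has a unique edge up to $V_{i-1}$ and all non-surface edges have this form; each vertex of $V_i$ with $i<d$ has total degree $\ell+1$; and the surface $V_0$ is regular of degree $1+\bigl(\frac{D_K}{\ell}\bigr)\in\{0,1,2\}$. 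The main obstacle is the non-maximal isogeny count in the case $c\ge 1$; the exclusion of $j=0,1728$ is essential here, since the extra automorphisms at those points would identify distinct kernels with the same target $j$-invariant and produce the multi-edges that the volcano definition forbids.
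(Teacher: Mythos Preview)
Your proposal is correct and matches the paper's approach: the paper does not give an independent proof but simply states that the proposition ``follows essentially from \cite[Prop.~23]{Kohel:thesis}'' with additional details in \cite[Lemmas~2.1--6]{Fouquet:IsogenyVolcanoes}, and your sketch is precisely the Kohel/Fouquet--Morain argument (local classification of $\ell$-isogenies into horizontal/ascending/descending, the edge counts at each level, and assembly into the volcano structure). One small point you leave implicit is the connectedness of the surface $V_0$ within a component $C$; this follows from the unique-ascent property you already stated, since any path in $C$ between two surface vertices that descends must return via the same ascending edges.
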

Here $\nu_\ell$ denotes the $\ell$-adic valuation (so $\ell^d|w$ but $\ell^{d+1}\nmid w$).
The proposition follows essentially from \cite[Prop.~23]{Kohel:thesis}.  See \cite[Lemmas 2.1-6]{Fouquet:IsogenyVolcanoes} for additional details and \cite[Thm.~1.19, Prop.~12.20]{Washington:EllipticCurves} for properties of $\Phi_\ell$.

We have excluded $j=0, 1728$ (which can arise only when $D_K=-3,-4$) for technical reasons, see \cite[Rem.~12.21]{Washington:EllipticCurves}.  However a nearly equivalent statement holds; only the degrees of the vertices 0 and 1728 are affected.

\subsection{Obtaining an element of $\boldsymbol{\EllD(\Fp)}$}\label{subsection:FindEllD}

Given $j(E)\in\Ellt(\Fp)-\{0,1728\}$, we may apply Proposition~\ref{prop:volcano} to obtain an element of $\EllD(\Fp)$.
Let $u$ and $u_E$ be the conductors of $\O$ and $\End(E)$ respectively; both $u$ and $u_E$ divide $w$, the conductor of $D_\pi=t^2-4p$.
Suppose $\nu_\ell(u_E)\not=\nu_\ell(u)$ for some prime $\ell$.
If we replace $j=j(E)$ by a vertex at level $\nu_\ell(u)$ in $j$'s $\ell$-volcano, we then have $\nu_\ell(u_E)=\nu_\ell(u)$.
Proposition~\ref{prop:volcano} assures us that this ``adjustment" only affects the power of $\ell$ dividing $u_E$.  Repeating this for each prime $\ell|w$, we eventually have $u_E=u$ and $j(E)\in\EllD(\Fp)$.

To change location in an $\ell$-volcano we walk a \emph{path}, which we define to be a sequence of vertices $j_0,\ldots,j_n$ connected by edges $(j_k,j_{k+1})$, such that $j_{k-1}\ne j_{k+1}$ for all $0<k<n$ (this condition is enforced by never taking a backward step).

Paths in $\Glt(\Fp)$ are computed by choosing an initial edge $(j_0,j_1)$, and for $k>0$ extending the path $j_0,\ldots,j_k$ by picking a root $j_{k+1}$ of the polynomial
\begin{equation*}
f(X) = \Phi_\ell(X,j_k)/(X-j_{k-1})^e\in\Fp[x].
\end{equation*}
Here $e$ is the multiplicity of the root $j_{k-1}$ in $\Phi_\ell(X,j_k)$, equal to one in all but a few special cases (see \cite[Lemma~2.6 and Thm.~2.2]{Fouquet:IsogenyVolcanoes}).  If $f(X)$ has no roots in $\Fp$, then  $j_k$ has no neighbors other than $j_{k-1}$ and the path must end at $j_k$.

When a path has $j_k\in V_i$ and $j_{k+1}\in V_{i+1}$, we say the path \emph{descends} at $k$.  Once a path starts descending, it must continue to do so.  If a path descends at every step and terminates at the floor, we call it a \emph{descending path}, as in \cite[Def.~4.1]{Fouquet:IsogenyVolcanoes}.
\smallskip

We now present an algorithm to determine the level of a vertex $j$ in an $\ell$-volcano, following Kohel \cite[p.~46]{Kohel:thesis}.  
When walking a path, we suppose neighbors are picked uniformly at random whenever there is a choice to be made.

\algstart{\textsc{FindLevel}}
{Compute the level of $j$ in an $\ell$-volcano of depth $d$$:$}
\algitem
If $\deg(j)\ne \ell+1$ then return $d$, otherwise let $j_1\ne j_2$ be neighbors of $j$.
\algitem
Walk a path of length $k_1\le d$ extending $(j,j_1)$.
\algitem
Walk a path of length $k_2\le k_1$ extending $(j,j_2)$.
\algitem
Return $d-k_2$.
\algend
\noindent
If \textsc{FindLevel} terminates in Step 1, then $j$ is on the floor at level $d$.
The paths walked in Steps 2 and 3 are extended as far as possible, up to the specified bound. If $j$ is on the surface, then these paths both have length $d$, and otherwise at least one of them is a descending path of length $k_2$.  In both cases, $j$ is on level $d-k_2$.
\smallskip

We use the algorithms below to change levels in an $\ell$-volcano of depth $d > 0$.

\algstart{\textsc{Descend}}
{Given $j\in V_{k}\ne V_d$, return $j'\in V_{k+1}$$:$}
\algitem
If $k=0$, walk a path $(j=j_0,\ldots,j_n)$ to the floor and return $j'=j_{n-d+1}$.
\algitem
Otherwise, let $j_1$ and $j_2$ be distinct neighbors of $j$.
\algitem
Walk a path of length $d-k$ extending $(j,j_1)$ and ending in $j^*$.
\algitem
If $\deg(j^*)=1$ then return $j'=j_1$, otherwise return $j'=j_2$.
\algend

\algstart{\textsc{Ascend}}
{Given $j\in V_{k}\ne V_0$, return $j'\in V_{k-1}$$:$}
\algitem
If $\deg(j)=1$ then let $j'$ be the neighbor of $j$ and return $j'$,\\
otherwise let $j_1,\ldots,j_{\ell+1}$ be the neighbors of $j$.
\algitem
For each $i$ from 1 to $\ell$:
\begin{enumerate}
\item
Walk a path of length $d-k$ extending $(j,j_i)$ and ending in $j^*$.
\item
If $\deg(j^*)>1$ then return $j'=j_i$.
\end{enumerate}
\algitem
Return $j'=j_{\ell+1}$.
\algend
\noindent
The correctness of \textsc{Descend} and \textsc{Ascend} is easily verified.  We note that if $k=0$ in \textsc{Descend}, then the expected value of $n$ is at most $d+2$ (for any $\ell$).
\medskip

We now give the algorithm to find an element $j'\in\EllD(\Fp)$, given $j\in\Ellt(\Fp)$.  We use a bound $L$ on the primes $\ell|w$, reverting to a computation of the endomorphism ring to address $\ell>L$, as discussed below.  This is never necessary when $D$ is fundamental, but may arise when the conductor of $D$ has a large prime factor.

\algstart{{\bf 1.2}}{Let $p\in\mathcal{P}_D$, let $u$ be the conductor of $D$, and let $w=uv$, where $v=v(p)$.
Given $j\in \Ellt(\Fp)-\{0,1728\}$, find $j'\in \EllD(\Fp)$$:$}
\algitem
For each prime $\ell|w$ with $\ell\le L=\max(\log|D|,v)$:
\begin{enumerate}
\item
Use \textsc{FindLevel} to determine the level of $j$ in its $\ell$-volcano.
\item
Use \textsc{Descend} and \textsc{Ascend} to obtain $j'$ at level $\nu_\ell(u)$ and set $j\leftarrow j'$.
\end{enumerate}
\algitem
If $u$ is not $L$-smooth, verify that $j\in \EllD(\Fp)$ and \textbf{\tt abort} if not.
\algitem
Return $j'=j$.
\algend
\noindent
The verification in Step 2 involves computing $\End(E)$ for an elliptic curve $E/\Fp$ with $j(E)=j$.  Here we may use the algorithm in \cite{BissonSutherland:Endomorphism}, or Kohel's algorithm \cite{Kohel:thesis}.  The former is faster in practice (with a heuristically subexponential running time) but for the proof of Theorem~1 we use the $O(p^{1/3})$ complexity bound of Kohel's algorithm, which depends only on the GRH.

For $p\in S$, we expect $v$ to be small, $O(\log^{3+\epsilon}|D|)$ under the GRH, and heuristically $O(\log^{1/2}|D|)$.  Provided $u$ does not contain a prime larger than $L$, the running time of Algorithm~1.2 is polynomial in $\log|D|$, under the GRH.

However, if $u$ is divisible by a prime $\ell>L$, we want to avoid the cost of computing $\ell$-isogenies.  Such an $\ell$ cannot divide $v$ (since $L\ge v$), so our desired $j'$ must lie on the floor of its $\ell$-volcano.  When $\ell$ is large, it is highly probable that our initial $j$ is already on the floor (this is where most of the vertices in an $\ell$-volcano lie), and this will still hold in Step 2.  Since $L\ge\log|D|$ is asymptotically larger than the number of prime factors of $u$, the probability of a failure in Step 2 is $o(1)$.  If Algorithm~1.2 aborts, we call Algorithm~1.1 again and retry.

If $D_K$ is $-3$ or $-4$, then $j$ may lie in a component of $\Glt(\Fp)$ containing $0$ or 1728.  However, provided we never pick $0$ or $1728$ when choosing a neighbor, \textsc{FindLevel}, \textsc{Descend}, and \textsc{Ascend} will correctly handle this case.

\subsection{Enumerating $\boldsymbol{\EllD(\Fp)}$}\label{subsection:EnumEllD}

Having obtained $j_0\in\EllD(\Fp)$, we now wish to enumerate the rest of $\EllD(\Fp)$.  We assume $h(D) > 1$ and apply the group action of $\clD$ to the set $\EllD(\Fp)$.  Let $\ell$ be a prime not dividing the conductor $u$ of $D$ with $\inkron{D}{\ell}\ne -1$.  Then $\ell$ can be uniquely factored in $\O$ into conjugate prime ideals as $(\ell)=\mathfrak{a}\bar{\mathfrak{a}}$, where $\mathfrak{a}$ and $\bar{\mathfrak{a}}$ both have norm $\ell$.  The ideals $\mathfrak{a}$ and $\bar{\mathfrak{a}}$ are distinct when $\inkron{D}{\ell}=1$, and in any case the ideal classes $[\mathfrak{a}]$ and $[\bar{\mathfrak{a}}]$ are inverses.  
The orders of $[\mathfrak{a}]$ and $[\bar{\mathfrak{a}}]$ in $\clD$ are equal, and we denote their common value by $\ord_D(\ell)$.  The following
proposition follows immediately from Propositions \ref{prop:CM} and \ref{prop:volcano}.

\begin{proposition}
Let $\ell\ne p$ be a prime such that $\ell\nmid u$ and $\inkron{D}{\ell}\ne -1$.  Then every element of $\EllD(\Fp)$ lies on the surface $V_0$ of its $\ell$-volcano, and $\#V_0=\ord_D(\ell)$.
\end{proposition}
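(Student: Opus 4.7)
The plan is to combine Propositions~\ref{prop:CM} and~\ref{prop:volcano} with the factorization of $\ell$ in $\O$. First I would establish $\EllD(\Fp)\subseteq V_0$: for $j(E)\in\EllD(\Fp)$, the endomorphism ring $\End(E)\cong\O$ has conductor $u$, and $\ell\nmid u$ by hypothesis. Proposition~\ref{prop:volcano} places $j(E)$ at the unique level $i$ with $\End(E)\cong\O_i$, where $\O_i$ has index $\ell^i$ in $\O_0$ and $\ell\nmid[\O_K:\O_0]$. Matching $\ell$-adic conductor valuations forces $i=0$, so $j(E)\in V_0$.

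For the second claim, since $\inkron{D}{\ell}\ne -1$ and $\ell\nmid u$, the prime $\ell$ factors in $\O$ as $(\ell)=\mathfrak{a}\bar{\mathfrak{a}}$ with invertible prime ideals of norm $\ell$ (equal in the ramified case, distinct in the split case). Fixing any $j_0\in V_0$, I would identify $V_0$ with the orbit $\mathcal{O}$ of $j_0$ under $\langle[\mathfrak{a}]\rangle\le\clD$. One inclusion is direct: each step $j\mapsto j^{\mathfrak{a}}$ is an $\ell$-isogeny (since $N(\mathfrak{a})=\ell$) between elements of $\EllD(\Fp)\subseteq V_0$, giving a horizontal edge, so $\mathcal{O}\subseteq V_0$. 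The reverse inclusion is the main obstacle, and is where I would need to argue most carefully: every horizontal $\ell$-isogeny out of a vertex $j\in V_0$ has kernel equal to a cyclic subgroup of $E[\ell]$ preserved by $\End(E)\cong\O$ (as both endpoints have endomorphism ring $\O$), hence of the form $E[\mathfrak{I}]$ for an invertible $\O$-ideal $\mathfrak{I}$ of norm $\ell$; the only candidates are $\mathfrak{a}$ and $\bar{\mathfrak{a}}$, whose classes satisfy $[\bar{\mathfrak{a}}]=[\mathfrak{a}]^{-1}\in\langle[\mathfrak{a}]\rangle$. Hence every horizontal neighbor of $j$ lies in $\mathcal{O}$, and since $V_0$ is connected, $V_0=\mathcal{O}$.

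Finally, by Proposition~\ref{prop:CM}(2) the action of $\clD$ on $\EllD(\Fp)$ is free, so $\#V_0=\#\mathcal{O}$ equals the order of $\langle[\mathfrak{a}]\rangle$ in $\clD$, which is exactly $\ord_D(\ell)$.
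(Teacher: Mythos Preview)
Your proposal is correct and follows exactly the route the paper indicates: the paper simply asserts that the proposition ``follows immediately from Propositions~\ref{prop:CM} and~\ref{prop:volcano}'' without further argument, and you have supplied precisely those details, using Proposition~\ref{prop:volcano} to place $\EllD(\Fp)$ on the surface and Proposition~\ref{prop:CM} (via the free transitive action and the identification of horizontal $\ell$-isogenies with the action of $[\mathfrak{a}]^{\pm1}$) to count $V_0$.
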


If $\ord_D(\ell)=h(D)$, then $\EllD(\Fp)$ is equal to the surface of the $\ell$-volcano containing $j_0$, but in general we must traverse several volcanoes to enumerate $\EllD(\Fp)$.  We first describe how to walk a path along the surface of a single $\ell$-volcano.

When $\ell$ does not divide $v$, every $\ell$-volcano in $\Glt(\Fp)$ has depth zero.  In this case walking a path on the surface is trivial: for $\#V_0>2$ we choose one of the two roots of $\Phi_\ell(X,j_0)$, and every subsequent step is determined by the single root of the polynomial $f(X)=\Phi_\ell(X,j_i)/(X-j_{i-1})$.  The cost of each step is then
\begin{equation}\label{equation:isogenycost}
O(\ell^2+\M(\ell)\log{p})
\end{equation}
operations in $\Fp$, where $\M(n)$ is the complexity of multiplication (the first term is the time to evaluate $\Phi_\ell(X,j_i)$, the second term is the time to compute $X^p\bmod f$).

While it is simpler to restrict ourselves to primes $\ell\nmid v$ (there are infinitely many $\ell$ we might use), as a practical matter, the time spent enumerating $\EllD(\Fp)$ depends critically on $\ell$.  Consider $\ell=2$ versus $\ell=7$.  The cost of finding a root of $f(X)$ when $f$ has degree 7 may be 10 or 20 times the cost when $f$ has degree 2.
We much prefer $\ell=2$, even when the 2-volcano has depth $d>0$ (necessarily the case when $\inkron{D}{2}=1$).  The following algorithm allows us to handle $\ell$-volcanoes of any depth.

\algstart{\textsc{WalkSurfacePath}}{Given $j_0\in V_0$ in an $\ell$-volcano of depth $d$ and a positive integer $n< \#V_0$, return a path $j_0,j_1\ldots,j_n$ contained in $V_0$:}
\algitem
If $\deg(j_0)=1$ then return the path $j_0,j_1$, where $j_1$ is the neighbor of $j_0$.\\
Otherwise, walk a path $j_0,\ldots,j_d$ and set $i\leftarrow 0$.
\algitem
While $\deg(j_{i+d})=1$, replace $j_{i+1},\ldots,j_{i+d}$ by extending the path $j_0,\ldots,j_i$ by
$d$ steps, starting from a random unvisited neighbor $j_{i+1}'$ of $j_i$.
\algitem
Extend the path $j_0,\ldots,j_{i+d}$ to $j_0,\ldots,j_{i+d+1}$, then set $i\leftarrow i+1$.
\algitem
If $i=n$ then return $j_0,\ldots, j_n$, otherwise go to Step 2.
\algend
\noindent
When $d=0$ the algorithm necessarily returns a path that is contained in $V_0$.  Otherwise, the path extending $d+1$ steps beyond $j_i\in V_0$ in Step~3 guarantees that $j_{i+1}\in V_0$.  The algorithm maintains (for the current value of $i$) a list of visited neighbors of $j_i$ to facilitate the choice of an unvisited neighbor in Step~2.

To bound the expected running time, we count the vertices examined during its execution, that is, the number of vertices whose neighbors are computed.

\begin{proposition}\label{prop:WalkSurface}
Let the random variable $X$ be the number of vertices examined by \textsc{WalkSurfacePath}.
If $\#V_0=2$ then ${\rm\bf E}[X]= d+1+ld/2$, and otherwise
$${\rm\bf E}[X] \le d+(1+(\ell-1)d/2)n.$$
\end{proposition}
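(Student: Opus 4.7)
The plan is to decompose $X = d + \sum_{i=0}^{n-1} X_i$, where the initial term $d$ counts the vertices $j_0, j_1, \ldots, j_{d-1}$ examined while walking the path of Step~1, and $X_i$ counts the fresh examinations made during iteration $i$ of Steps~2--3. The key structural observation that controls the while loop is the following: a $d$-step path from $j_i \in V_0$ ends at the floor, i.e.\ $\deg(j_{i+d}) = 1$, if and only if its first step $j_i \to j_{i+1}$ is descending. Indeed, once the walk enters $V_1$, the unique ascending neighbor is the forbidden backward step, so the walk is forced to descend at every subsequent step and lands in $V_d$ after exactly $d$ steps; conversely, if the first step is horizontal then after $r \ge 1$ initial horizontal steps the path descends (at most) $d-r$ levels and ends above the floor. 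Thus the while loop at iteration $i$ rejects precisely when $j_{i+1}$ is a descending neighbor of $j_i$ and accepts precisely when $j_{i+1} \in V_0$.

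Fresh-examination bookkeeping in iteration $i$: the degree test of the endpoint $j_{i+d}$ adds one examination; each retry in the while loop re-walks $d$ steps from $j_i$, using $j_i$'s already-known neighbors and examining $d-1$ new intermediate vertices plus the new endpoint, for $d$ examinations per retry; and Step~3's one-step extension re-uses $j_{i+d}$'s known neighbors and adds nothing. The successive candidates for $j_{i+1}$ are generated by uniform sampling without replacement from the available neighbors of $j_i$: the first candidate is the one set during the current path's construction (Step~1's walk when $i=0$, or the previous iteration's walk when $i\ge 1$), and each retry selects another uniform unvisited neighbor.

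For $i \ge 1$, the backward vertex $j_{i-1}$ is excluded, leaving $\ell$ neighbors, exactly one of which (the other $V_0$-neighbor of $j_i$) is horizontal; the expected position of this single target in a uniform random permutation of length $\ell$ is $(\ell+1)/2$, giving $(\ell-1)/2$ expected retries. For $i = 0$ no exclusion applies and all $\ell+1$ neighbors are candidates with $2$ horizontal ones, so the expected position of the first horizontal is $(\ell+2)/3$, giving $(\ell-1)/3 \le (\ell-1)/2$ expected retries. Summing,
\begin{equation*}
\mathbf{E}[X] \;=\; d + \sum_{i=0}^{n-1}\bigl(1 + d\,\mathbf{E}[\mathrm{retries}_i]\bigr) \;\le\; d + n\Bigl(1 + \tfrac{(\ell-1)d}{2}\Bigr),
\end{equation*}
which is the claimed bound. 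For the degenerate case $\#V_0 = 2$ (so $n = 1$), the surface consists of the single edge $(j_0, j_1)$, and $j_0$ has just one horizontal neighbor among its $\ell+1$ total; the expected number of candidates tried until horizontal is $(\ell+2)/2$, yielding $\ell/2$ expected retries and hence the exact value $\mathbf{E}[X] = d + 1 + \ell d/2$ (when $d = 0$ the algorithm returns after the single examination $\deg(j_0)=1$, which matches the formula).

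The main subtlety is justifying the ``uniform random permutation'' claim: one must check that the initial candidate entering iteration $i \ge 1$ is itself distributed uniformly over the $\ell$ non-backward neighbors of $j_i$, which follows because it was chosen as the second step of the previous iteration's walk from $j_{i-1}$, under the same uniform-random-neighbor rule, and that this choice is independent of the subsequent unvisited-neighbor draws that occur if the endpoint test fails. Once this is in hand, the rest is the elementary calculation above.
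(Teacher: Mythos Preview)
Your proof is correct and follows essentially the same approach as the paper's: both decompose $X$ into the initial $d$ examinations from Step~1 plus per-iteration contributions $X_i = 1 + m_i d$ (where $m_i$ counts incorrect choices of $j_{i+1}$), and both compute $\mathbf{E}[m_0]$ and $\mathbf{E}[m_i]$ for $i\ge 1$ via the same without-replacement sampling argument. You are in fact more explicit than the paper on two points it leaves implicit---the structural fact that the $d$-step endpoint lies on the floor iff the first step descends, and the justification that the initial candidate entering iteration $i\ge 1$ is uniform over the $\ell$ non-backward neighbors---so your write-up is, if anything, slightly more complete.
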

\begin{proof}
If $d=0$ then \textsc{WalkSurfacePath} examines exactly $n$ vertices and the proposition holds, so we assume $d > 0$ and note that $\deg(j_0)>1$ in this case.
We partition the execution of the algorithm into phases, with phase -1 consisting of Step 1, and the remaining phases corresponding to the value of $i$.  At the start of phase $i\ge 0$ we have $j_i\in V_0$ and the path $j_0,\ldots, j_{i+d}$.  Let the random variable $X_i$ be the number of vertices examined in phase $i$, so that $X=X_{-1}+X_0+\cdots+X_n$.  We have $X_{-1}=d$ and $X_n=0$.
For $0\le i < n$ we have $X_i=1+md$, where $m$ counts the number of incorrect choices of $j_{i+1}$ (those not in $V_0$).

We first suppose $\#V_0=2$.  In this case exactly one of the $\ell+1$ neighbors of $j_0$ lies in $V_0$.  Conditioning on $m$ we obtain
$${\rm\bf E}[X_0] = \sum_{m=0}^\ell\Bigl(1+md\Bigr)\frac{1}{\ell+1-m}\prod_{k=0}^{m-1}\left(\frac{\ell-k}{\ell+1-k}\right)
=\sum_{m=0}^\ell\frac{1+md}{\ell+1}=1+ld/2.
$$
This yields
$${\rm\bf E}[X]={\rm\bf E}[X_{-1}]+{\rm\bf E}[X_0]+{\rm\bf E}[X_1] = d + 1 + ld/2,$$
as desired.  We now assume $\#V_0>2$.  Then two of $j_0$'s neighbors lie in $V_0$ and we find that ${\rm\bf E}[X_0] = 1+(\ell-1)d/3$.
For $i>1$ we exclude the neighbor $j_{i-1}$ of $j_i$ and obtain ${\rm\bf E}[X_i] = 1+(\ell-1)d/2$.
Summing expectations completes the proof.
\end{proof}

Using an estimate of the time to find the roots of a polynomial of degree $\ell$ in $\Fp[X]$, we may apply Proposition~\ref{prop:WalkSurface} to optimize the choice of the primes $\ell$ that we use when enumerating $\EllD(\Fp)$, as discussed in the next section.  As an example, if $\inkron{D}{2}=1$ and $\nu_2(v)=2$, then we need to solve an average of roughly 2 quadratic equations for each vertex when we walk a path along the surface of a 2-volcano in $\Glt(\Fp)$.  This is preferable to using any $\ell>2$, even when $\ell\nmid v$.  On the other hand, if $\inkron{D}{5}=\inkron{D}{7}=1$ and $5|v$ but $7\nmid v$, we likely prefer $\ell=7$ to $\ell=5$.

\smallskip
We now present Algorithm~1.3, which, given $j_0\in\EllD(\Fp)$ and suitable lists of primes $\ell_i$ and integers $r_i$, outputs the elements of $\EllD(\Fp)-\{j_0\}$.  It may be viewed as a generalization of \textsc{WalkSurfacePath} to $k$ dimensions.

\algstart{1.3}{Given $j_0\in\EllD(\Fp)$, primes $\ell_1,\ldots,\ell_k$ with $\ell_i\nmid u$ and $\inkron{D}{\ell_i}\ne -1$, and integers $r_1,\ldots,r_k$, with $1< r_i\le \ord_D(\ell_i)$:}
\algitem
Use \textsc{WalkSurfacePath} to compute a path $j_0,j_1,\ldots,j_{r_k-1}$ of length $r_k-1$ on the surface of the $\ell_k$-volcano containing $j_0$, and output $j_1,\ldots,j_{r_k-1}$.
\algitem
If $k>1$ then for $i$ from 0 to $r_k-1$ recursively call Algorithm~1.3 using $j_i$, the primes $\ell_1,\ldots,\ell_{k-1}$, and the integers $r_1,\ldots,r_{k-1}$.
\algend

Proposition~\ref{prop:volcano} implies that Algorithm~1.3 outputs a subset of $\EllD(\Fp)$, since $j_0,j_1,\ldots,j_{r_k-1}$ all lie on the surface of the same $\ell_k$-volcano (and this applies recursively).  To ensure that Algorithm~1.3 outputs all the elements of $\EllD(\Fp)-\{j_0\}$, we use a polycyclic presentation for $\clD$, as defined in the next section.

\pagebreak
\section{Polycyclic Presentations of Finite Abelian Groups}\label{section:PolycyclicPresentation}

To obtain suitable sequences $\ell_1,\ldots,\ell_k$ and $r_1\ldots,r_k$ for use with Algorithm~1.3, we apply the theory of polycyclic presentations \cite[Ch.~8]{Holt:CGTHandbook}.  Of course $\clD$ is a finite abelian group, but the concepts we need have been fully developed in the setting of polycyclic groups, and conveniently specialize to the finite abelian case.

Let $\vec{\alpha}=(\alpha_1,\ldots,\alpha_k)$ be a sequence of generators for a finite abelian group $G$, and let
$G_i=\langle\alpha_1,\ldots,\alpha_i\rangle$ be the subgroup generated by $\alpha_1,\ldots,\alpha_i$.  The series
\begin{equation*}
1=G_0\le G_1 \le \cdots\le G_{k-1} \le G_{k} = G,
\end{equation*}
is necessarily a \emph{polycyclic series}, that is, a subnormal series in which each quotient $G_i/G_{i-1}$ is a cyclic group.
Indeed, $G_i/G_{i-1}=\langle\alpha_iG_{i-1}\rangle$, and $\vec{\alpha}$ is a \emph{polycyclic sequence} for $G$.  We say that $\vec{\alpha}$ is \emph{minimal} if none of the quotients are trivial.

When $G=\prod\langle\alpha_i\rangle$, we have $G_i/G_{i-1}\cong\langle\alpha_i\rangle$ and call $\vec{\alpha}$ a \emph{basis} for $G$, but this is a special case.  For abelian groups, $G_i/G_{i-1}$ is isomorphic to a subgroup of $\langle\alpha_i\rangle$, but it may be a proper subgroup, even when $\vec{\alpha}$ is minimal.

The sequence $r(\vec{\alpha})=(r_1,\ldots,r_k)$ of \emph{relative orders} for $\vec{\alpha}$ is defined by
\begin{equation*}
r_i=|G_i:G_{i-1}|.
\end{equation*}
We necessarily have $\prod r_i=|G|$, and if $\vec{\alpha}$ is minimal then $r_i>1$ for all $i$.  The sequences $\vec{\alpha}$ and $r(\vec{\alpha})$ allow us to uniquely represent every $\beta\in G$ in the form $$\beta=\vec{\alpha}^\vec{x}=\alpha_1^{x_1}\cdots\alpha_k^{x_k}.$$
\begin{lemma}\label{lemma:urep}
Let $\vec{\alpha}=(\alpha_1,\ldots,\alpha_k)$ be a sequence of generators for a finite abelian group $G$, let $r(\vec{\alpha})=(r_1,\ldots,r_k)$, and let $X(\vec{\alpha})=\{\vec{x}\in\Z^k:0\le x_i < r_i\}$.
\begin{enumerate}
\item
For each $\beta\in G$ there is a unique $\vec{x}\in X(\vec{\alpha})$ such that $\beta=\vec{\alpha}^\vec{x}$.
\item
The vector $\vec{x}$ such that $\alpha_i^{r_i}=\vec{\alpha}^\vec{x}$ has $x_j=0$ for $j\ge i$.
\end{enumerate}
\end{lemma}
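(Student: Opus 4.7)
The plan is to prove part (1) by induction on $k$, exploiting the polycyclic series $1 = G_0 \le G_1 \le \cdots \le G_k = G$ noted immediately before the lemma, in which $G_i/G_{i-1} = \langle \alpha_i G_{i-1}\rangle$ is cyclic of order $r_i$. Part (2) will then drop out as an easy corollary of part~(1) applied to a truncated sequence.

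For the existence half of part (1), given $\beta \in G_k$, I would pass to the quotient $G_k/G_{k-1}$: since $\alpha_k G_{k-1}$ generates this cyclic group of order $r_k$, the coset $\beta G_{k-1}$ equals $(\alpha_k G_{k-1})^{x_k}$ for a unique $x_k \in \{0, 1, \ldots, r_k-1\}$. Hence $\alpha_k^{-x_k}\beta \in G_{k-1}$, and the inductive hypothesis applied to the generating sequence $(\alpha_1, \ldots, \alpha_{k-1})$ of $G_{k-1}$ yields $x_1, \ldots, x_{k-1}$ in the prescribed ranges with $\alpha_k^{-x_k}\beta = \alpha_1^{x_1}\cdots\alpha_{k-1}^{x_{k-1}}$. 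For uniqueness, suppose $\vec{\alpha}^{\vec{x}} = \vec{\alpha}^{\vec{y}}$ with $\vec{x}, \vec{y} \in X(\vec{\alpha})$. Reducing modulo $G_{k-1}$ trivializes $\alpha_1, \ldots, \alpha_{k-1}$ and leaves $(\alpha_k G_{k-1})^{x_k} = (\alpha_k G_{k-1})^{y_k}$ in a cyclic group of order $r_k$; since $0 \le x_k, y_k < r_k$, this forces $x_k = y_k$. Cancelling $\alpha_k^{x_k}$ on both sides and invoking induction on $G_{k-1}$ gives $x_j = y_j$ for all $j < k$.

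For part (2), by the very definition $r_i = |G_i : G_{i-1}|$ is the order of $\alpha_i G_{i-1}$ in $G_i/G_{i-1}$, so $\alpha_i^{r_i} \in G_{i-1} = \langle \alpha_1, \ldots, \alpha_{i-1}\rangle$. Applying part (1) to the polycyclic sequence $(\alpha_1, \ldots, \alpha_{i-1})$ of $G_{i-1}$ produces a representation $\alpha_i^{r_i} = \alpha_1^{x_1}\cdots\alpha_{i-1}^{x_{i-1}}$ with $0 \le x_j < r_j$ for $j < i$. Padding with $x_j = 0$ for $j \ge i$ yields a vector in $X(\vec{\alpha})$ realizing $\alpha_i^{r_i} = \vec{\alpha}^{\vec{x}}$, and the uniqueness from part~(1) guarantees this is the vector in question.

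There is no real obstacle here; this is a clean induction that simply repackages the polycyclic-series structure already established in the text. The only point worth verifying carefully is that the relative orders associated to the truncated sequence $(\alpha_1, \ldots, \alpha_{i-1})$ in $G_{i-1}$ coincide with $(r_1, \ldots, r_{i-1})$, which is immediate because the series $G_0 \le G_1 \le \cdots \le G_{i-1}$ is a terminal segment of the polycyclic series for $G$ and so each intrinsic index $|G_j : G_{j-1}|$ is unchanged.
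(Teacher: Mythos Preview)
Your proof is correct and complete. The paper does not actually prove this lemma: it simply cites Lemmas~8.3 and~8.6 of Holt--Eick--O'Brien, \emph{Handbook of Computational Group Theory}. Your inductive argument via the polycyclic series is precisely the standard proof one finds there, so there is nothing to compare beyond noting that you have supplied the details the paper outsources.
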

\begin{proof}
See Lemmas 8.3 and 8.6 in \cite{Holt:CGTHandbook}.
\end{proof}
\noindent
The vector $\vec{x}$ is the \emph{discrete logarithm} (exponent vector) of $\beta$ with respect to~$\vec{\alpha}$.
The relations $\alpha_i^{r_i}=\vec{\alpha}^\vec{x}$ are called \emph{power relations}, and may be used to define a (consistent) polycyclic presentation for an abelian group $G$, as in \cite[Def.~8.7]{Holt:CGTHandbook}.

We now show that a minimal polycyclic sequence for $\clD$ provides suitable inputs for Algorithm~1.3.

\begin{proposition}\label{prop:enum}
Let $\vec{\alpha}=(\alpha_1,\ldots,\alpha_k)$ be a minimal polycyclic sequence for $\clD$ with relative orders $r(\vec{\alpha})=(r_1,\ldots,r_k)$, and let $\ell_1,\ldots,\ell_k$ be primes for which $\alpha_i$ contains an invertible ideal of norm $\ell_i$.  Given $j_0\in\EllD(\Fp)$, the primes $\ell_i$, and the integers $r_i$, {\rm Algorithm} $1.3$ outputs each element of $\EllD(\Fp)-\{j_0\}$ exactly once.
\end{proposition}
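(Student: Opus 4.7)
The plan is to proceed by induction on $k$. By Proposition~\ref{prop:CM}(2), the class group $\clD$ acts freely and transitively on $\EllD(\Fp)$, and by Lemma~\ref{lemma:urep}(1) every $\beta\in\clD$ has a unique representation $\beta=\vec{\alpha}^\vec{x}$ with $\vec{x}\in X(\vec{\alpha})$. It therefore suffices to show that Algorithm~1.3 outputs exactly the set $\{\vec{\alpha}^\vec{x}\cdot j_0 : \vec{x}\in X(\vec{\alpha}),\ \vec{x}\ne\vec{0}\}$, with each element appearing exactly once; note that a cardinality count then follows from $\prod r_i = h(D)$.

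For the base case $k=1$, I would invoke the proposition preceding Algorithm~1.3 to identify the surface of the $\ell_1$-volcano containing $j_0$ with a cycle of length $\ord_D(\ell_1)\ge r_1$, on which $\alpha_1$ acts by an $\ell_1$-isogeny, i.e.\ by one step in one direction, while $\bar\alpha_1=\alpha_1^{-1}$ acts by a step in the other. Thus \textsc{WalkSurfacePath} produces $j_i=\alpha_1^{\pm i}\cdot j_0$ for $i=1,\ldots,r_1-1$; replacing $\alpha_1$ by $\bar\alpha_1$ if necessary does not change $\langle\alpha_1\rangle$ or $r_1$, so the outputs match the desired set.

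For the inductive step, I would apply the hypothesis to the subgroup $G_{k-1}=\langle\alpha_1,\ldots,\alpha_{k-1}\rangle$ with its relative orders $(r_1,\ldots,r_{k-1})$. The outer \textsc{WalkSurfacePath} produces $j_i=\alpha_k^{\pm i}\cdot j_0$ for $i=0,\ldots,r_k-1$; these are distinct elements of $\EllD(\Fp)$ since $r_k\le\ord_D(\ell_k)=\#V_0$ and every element of $\EllD(\Fp)$ lies on the surface of its $\ell_k$-volcano. The vertices $j_1,\ldots,j_{r_k-1}$ are output directly, and by induction the recursive call from each $j_i$ outputs $\{\alpha_1^{x_1}\cdots\alpha_{k-1}^{x_{k-1}}\cdot j_i : 0\le x_j<r_j,\ (x_1,\ldots,x_{k-1})\ne\vec{0}\}$ once each. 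Combining, the total output equals $\{\vec{\alpha}^\vec{x}\cdot j_0 : \vec{x}\in X(\vec{\alpha}),\ \vec{x}\ne\vec{0}\}$ with no repetitions; the count $(r_k-1)+r_k(|G_{k-1}|-1)=|G_k|-1=h(D)-1$ serves as a useful cross-check.

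The main obstacle will be justifying the identification of \textsc{WalkSurfacePath}'s output with the $\langle\alpha_k\rangle$-orbit of $j_0$. Concretely this requires (i) the surface of the $\ell_k$-volcano containing $j_0$ to coincide with $\langle\alpha_k\rangle\cdot j_0$, and (ii) each step along the surface to correspond to the action of $\alpha_k$ or $\bar\alpha_k$. Both follow from Proposition~\ref{prop:volcano} together with Proposition~\ref{prop:CM}: the action of $\alpha_k$ on $j(E)\in\EllD(\Fp)$ is realized by an $\ell_k$-isogeny, hence an edge of $\Glt(\Fp)$, and since both endpoints lie in $\EllD(\Fp)$ this edge must lie on the surface; the equality $|\langle\alpha_k\rangle|=\ord_D(\ell_k)=\#V_0$ then forces the orbit to fill the entire surface, so walking along surface edges is exactly the same as iterating the action of $\alpha_k$ (up to replacing it by its inverse).
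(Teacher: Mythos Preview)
Your proof is correct and follows essentially the same route as the paper's: both arguments are inductive on $k$, both identify each surface walk with iteration of $\alpha_i$ (or its inverse, noting this does not change $G_i$ or $r_i$), and both conclude by recognising that the outputs from the recursive calls at $j_0,\ldots,j_{r_k-1}$ correspond to the distinct cosets of $G_{k-1}$ in $G_k=\clD$. The paper presents the argument slightly differently by first reducing (via the count $\prod r_i=h(D)$) to showing that the outputs are pairwise distinct, and then arguing distinctness directly in terms of cosets; you instead establish the exact equality of the output with $\{\vec{\alpha}^\vec{x}\cdot j_0:\vec{x}\ne\vec{0}\}$ and use the count as a consistency check. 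Your final paragraph on identifying \textsc{WalkSurfacePath}'s output with the $\langle\alpha_k\rangle$-orbit via $\ord_D(\ell_k)=\#V_0=|\langle\alpha_k\rangle|$ makes explicit a point the paper leaves to the combination of Propositions~\ref{prop:CM} and~\ref{prop:volcano}.
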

\begin{proof}
As previously noted, Proposition~\ref{prop:volcano} implies that the outputs of Algorithm~1.3 are elements of $\EllD(\Fp)$.  Since $\prod r_i=\#\clD=\#\EllD(\Fp)$, by Proposition~\ref{prop:CM}, we need only show that the outputs are distinct (and not equal to $j_0$).

To each vertex of the isogeny graph output by Algorithm~1.3 we associate a vector $\vec{x}\in X(\vec{\alpha})$ that identifies its position relative to $j_0$ in the sequence of paths computed.  The vector $(x_1,\ldots,x_k)$ identifies the vertex reached from $j_0$ via a path of length $x_k$ on the surface of the $\ell_k$-volcano, followed by a path of length $x_{k-1}$ on the surface of the $\ell_{k-1}$-volcano, and so forth.  We associate the zero vector to $j_0$.

Propositions \ref{prop:CM} and \ref{prop:volcano} imply that the vector $\vec{x}=(x_1,\ldots,x_k)$ corresponds to the action of some $\beta_{\vec{x}}\in \clD$.
For each integer $t_k$ in the interval $[0,r_k)$, the set of vectors of the form $(*,\ldots,*,t_k)$ corresponds to a coset of $G_{k-1}$ in the polycyclic series for $G=\clD$.  These cosets are distinct, regardless of the direction chosen by Algorithm~1.3 when starting its path on the $\ell_k$-volcano (note that $\alpha_k$ and $\alpha_k^{-1}$ have the same relative order $r_k$).  Proceeding inductively, 
for each choice of integers $t_i,t_{i+1},\ldots,t_k$ with $t_j\in[0,r_j)$ for $i\le j\le k$, the set of vectors of the form $(*,\ldots,*,t_i,t_{i+1},\ldots,t_k)$ corresponds to a distinct coset of $G_{i-1}$, regardless of the direction chosen by Algorithm~1.3 on the surface of the $\ell_i$-volcano.  Each coset of the cyclic group $G_0$ corresponds bijectively to a set of vectors of the form $(*,t_2,\ldots,t_k)$.  It follows that the $\beta_{\vec{x}}$ are all distinct. The action of $\clD$ is faithful, hence the outputs of Algorithm~1.3 are distinct.
\end{proof}

\subsection{Computing an optimal polycyclic presentation}\label{subsection:ComputePresentation}

Let $\vec{\gamma}=(\gamma_1,\ldots,\gamma_n)$ be a sequence of generators for a finite abelian group $G$, ordered by increasing cost (according to some cost function).  Then $\vec{\gamma}$ is a polycyclic sequence, and we may compute $r(\vec{\gamma})=(r_1,\ldots,r_n)$.  If we remove from $\vec{\gamma}$ each $\gamma_i$ for which $r_i=1$ and let $\vec{\alpha}=(\alpha_1,\ldots,\alpha_k)$ denote the remaining subsequence, then $\vec{\alpha}$ is a minimal polycyclic sequence for $G$.  We call $\vec{\alpha}$ the \emph{optimal} polycyclic sequence derived from~$\vec{\gamma}$.  It has $\alpha_1=\gamma_1$ with minimal cost, and for $i>1$ each $\alpha_i$ is the least-cost element not already contained in $G_{i-1}=\langle\alpha_1,\ldots,\alpha_{i-1}\rangle$.

We now give a generic algorithm to compute $r(\vec{\gamma})$ and a vector $s(\vec{\gamma})$ that encodes the power relations.  From $r(\vec{\gamma})$ and $s(\vec{\gamma})$, we can easily derive $\vec{\alpha}, r(\vec{\alpha})$, and $s(\vec{\alpha})$.  We define $s(\vec{\gamma})$ using a bijection $X(\vec{\gamma})\to\{z\in\Z: 0\le z < |G|\}$ given by:
\begin{equation}
Z(\vec{x}) = \sum_{1\le j\le n} N_jx_j,\qquad\text{where}\quad N_j=\prod_{1\le i<j}r_i.
\end{equation}
For each power relation $\gamma_i^{r_i}=\vec{\gamma}^\vec{x}$, we set $s_i=Z(\vec{x})$.  The formula
\begin{equation}
x_j=\left\lfloor s_i/N_j\right\rfloor \bmod r_j
\end{equation}
recovers the component $x_j$ of the vector $\vec{x}$ for which $s_i=Z(\vec{x})$.

\algstart{{\bf 2.2}}{Given $\vec{\gamma}=(\gamma_1,\ldots,\gamma_n)$ generating a finite abelian group $G$:}
\algitem
Let $T$ be an empty table and call $\textsc{TableInsert}(T,1_G)$ (so $T[0]=1_G$).
\algitem
For $i$ from $1$ to $n$:
\algitem
\qquad Set $\beta\leftarrow\gamma_i$, $r_i\leftarrow 1$, and $N\leftarrow\textsc{TableSize}(T)$.
\algitem
\qquad Until $s_i\leftarrow \textsc{TableLookup}(T, \beta)$ succeeds:
\algitem
\qquad\qquad For $j$ from 0 to $N-1$: $\textsc{TableInsert}(T, \beta\cdot T[j])$.
\algitem
\qquad\qquad Set $\beta\leftarrow\beta\gamma_i$ and $r_i\leftarrow r_i+1$.
\algitem
Output $r(\vec{\gamma})=(r_1,\ldots,r_n)$ and $s(\vec{\gamma})=(s_1,\ldots,s_n)$.
\algend

The table $T$ stores elements of $G$ in an array, placing each inserted element in the next available entry.
The function $\textsc{TableLookup}(T,\beta)$ returns an integer~$j$ for which $T[j]=\beta$ or fails if no such $j$ exists (when $j$ exists it is unique).
In practice lookups are supported by an auxiliary data structure, such as a hash table, maintained by $\textsc{TableInsert}$.
When group elements are uniquely identified, as with $\clD$, the cost of table operations is typically negligible.

\begin{proposition}\label{prop:pcp}
{\rm Algorithm}~$2.2$ is correct.  It uses $|G|$ non-trivial group operations, makes $|G|$ calls to \textsc{TableInsert}, and makes $\sum r_i$ calls to \textsc{TableLookup}.
\end{proposition}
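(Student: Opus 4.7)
The plan is to prove Proposition~\ref{prop:pcp} by induction on $i$, showing that after iteration $i$ of the outer \textbf{For} loop, the table $T$ contains exactly the $|G_i|$ elements of $G_i=\langle\gamma_1,\ldots,\gamma_i\rangle$, each inserted once, and that the entries are arranged so that $T[Z(\vec{y})]=\vec{\gamma}^{\vec{y}}$ for every $\vec{y}\in X(\vec{\gamma})$ supported on the first $i$ coordinates. The base case $i=0$ is immediate from Step~1, which places $T[0]=1_G$.

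For the inductive step, suppose the invariant holds at the start of iteration $i$, so $N=|G_{i-1}|$. The critical claim is that the \textbf{Until} loop exits with $r_i$ equal to the true relative order $r=|G_i:G_{i-1}|$. I would argue in two parts. First, by a short minimality argument: if $1\le m<r$, then $\gamma_i^m$ cannot lie in any coset $\gamma_i^kG_{i-1}$ with $0\le k<m$ currently stored in $T$, for otherwise $\gamma_i^{m-k}\in G_{i-1}$ with $0<m-k<r$, contradicting the definition of $r$. Hence the lookup at loop iteration $m$ fails and another coset is appended. Second, at loop iteration $m=r$ we have $\gamma_i^r\in G_{i-1}\subseteq T$, so $\textsc{TableLookup}$ succeeds and returns a unique index. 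By the inductive hypothesis that index equals $Z(\vec{x})$ for the exponent vector $\vec{x}$ of $\gamma_i^r$ with respect to $(\gamma_1,\ldots,\gamma_{i-1})$, padded with zeros, as required by Lemma~\ref{lemma:urep}(2). The new entries, inserted in positions $kN+j$ as $\gamma_i^k\cdot T[j]$ for $0\le k<r$ and $0\le j<N$, extend the encoding consistently because $Z\bigl((y_1,\ldots,y_{i-1},k,0,\ldots,0)\bigr)=Z(\vec{y})+kN$.

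The operation counts then fall out from direct bookkeeping. Each execution of Step~5 issues $N_i$ calls to \textsc{TableInsert}, and Step~5 runs $r_i-1$ times in iteration $i$, contributing $(r_i-1)N_i=|G_i|-|G_{i-1}|$ insertions; together with the initial insertion in Step~1, the total telescopes to $1+\sum_i(|G_i|-|G_{i-1}|)=|G|$. Step~4 is invoked exactly $r_i$ times per iteration ($r_i-1$ failures and one success), giving $\sum r_i$ calls to \textsc{TableLookup}. For group operations, the $j=0$ multiplication in Step~5 is trivially $\beta\cdot T[0]=\beta$, so each Step~5 execution contributes $N_i-1$ non-trivial multiplications, and each Step~6 contributes one, yielding $(r_i-1)N_i$ non-trivial operations in iteration $i$ and $|G|-1\le|G|$ in total.

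The only real obstacle is the bookkeeping: one must verify that the \textbf{TableInsert} calls in Step~5 never produce a duplicate (so the table indices advance consistently) and that the ordering $T[Z(\vec{y})]=\vec{\gamma}^{\vec{y}}$ is preserved across iterations. Both follow from the disjointness of the cosets $\gamma_i^k G_{i-1}$ for $0\le k<r_i$, which is itself a direct consequence of the definition of the relative order $r_i$ established above. Termination is automatic, since each failed \textbf{TableLookup} strictly enlarges $T$ within the finite group $G$.
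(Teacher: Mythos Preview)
Your proposal is correct and follows essentially the same approach as the paper: an induction on $i$ establishing the invariant $T[Z(\vec{y})]=\vec{\gamma}^{\vec{y}}$ with $T$ holding $G_i$ after iteration $i$, followed by the same bookkeeping for the three operation counts (swapping the trivial $j=0$ multiplication in Step~5 for the non-trivial one in Step~6). Your argument is in fact slightly more explicit than the paper's in two places---the minimality argument showing the lookup fails for $m<r$, and the observation that the non-trivial operation count is actually $|G|-1\le|G|$---but these are refinements, not a different route.
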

\begin{proof}
We will prove inductively that $T[Z(\vec{x})]=\vec{\gamma}^\vec{x}$, and that each time the loop in Step 4 terminates, the values of $r_i$ and $s_i$ are correct and $T$ holds $G_i$.

When $i=1$ the algorithm computes $T[r_1]=\gamma_1^{r_1}T[0]$ for $r_1=1,2,\ldots$, until $\gamma_1^{r_1}=T[0]=1$, at which point $r_1=|\gamma_i|$, $s_1=0$, and $T$ holds $G_1$, as desired.

For $i>1$ we have $N=N_{i-1}$ and $T$ holds $G_{i-1}$ with $T[Z(\vec{x})]=\vec{\gamma}^\vec{x}$, by the inductive hypothesis.
For $r_i=1,2,\ldots$, if $\beta=\gamma_i^{r_i}$ is not in $T$, the algorithm computes $T[r_iN+j]=\gamma_i^{r_i}T[j]$, for $0\le j < N$, placing the coset $\gamma_i^{r_i}G_{i-1}$ in $T$.  When it finds $\gamma_i^{r_i}=T[s_i]$, the table $T$ contains all cosets of the form $\gamma_i^{r_i}G_{i-1}$ (since $G$ is abelian), hence $T$ holds $G_i$.  It follows that $r_i=|G_i:G_{i-1}|$ and $s_i$ is correct.

When the algorithm terminates, $T$ holds $G_n=G$, and every element of $G$ is inserted exactly once.  A group operation is performed for each call to \textsc{TableInsert}, but in each execution of Step 5 the first of these is trivial, and we instead count the non-trivial group operation in Step 6.  The number of calls to $\textsc{TableLookup}$ is clearly the sum of the $r_i$, which completes the proof.
\end{proof}

The complexity of Algorithm~2.2 is largely independent of $\vec{\gamma}$.  When $\vec{\gamma}$ contains every element of $G$, Algorithm~2.2 is essentially optimal.  However, if $\vec{\gamma}$ has size $n=o(|G|^{1/2})$, we can do asymptotically better with an $O(n|G|^{1/2})$ algorithm.
This is achieved by computing a basis $\vec{\alpha}$ for $G$ via a generic algorithm (as in  \cite{Buchmann:GroupStructure,Sutherland:Thesis,Sutherland:AbelianpGroups,Teske:GroupStructure}), and then determining the representation of each $\gamma_i=\vec{\alpha}^\vec{x}$ in this basis using a vector discrete logarithm algorithm (such as \cite[Alg.~9.3]{Sutherland:Thesis}).  It is then straightforward to compute $|G_i|$ for each $i$ and from this obtain $r_i=|G_i:G_{i-1}|$.  The power relations can then be computed using discrete logarithms with respect to $\vec{\gamma}$.  In the specific case $G=\clD$, one may go further and use a non-generic algorithm to compute a basis $\vec{\alpha}$ in subexponential time (under the ERH) \cite{Hafner:SubexponentialClassGroups}, and apply a vector form of the discrete logarithm algorithm in \cite{Vollmer:DiscreteLogClassGroup}.

\subsection{Application to $\boldsymbol{\clD}$}
For the practical range of $D$, the group $G=\clD$ is relatively small (typically $|G|< 10^8$), and the constant factors make Algorithm~2.2 faster than alternative approaches; even in the largest examples of Section~8 it takes only a few seconds.  Asymptotically, Algorithm~2.2 uses $O(|D|^{1/2+\epsilon})$ time and $O(|D|^{1/2}\log^2|D|)$ space to compute an optimal polycyclic sequence for $\clD$.  In fact, under the GRH, we can compute a separate polycyclic sequence for every $v(p)$ arising among the primes $p\in S$ that are selected by Algorithm~2.1 (Section~\ref{subsection:PickPrimes}) within the same complexity bound, by Lemma~\ref{lemma:pvmax} (Section~\ref{section:Complexity}).

We uniquely represent elements of $\clD$ with primitive, reduced, binary quadratic forms $ax^2+bxy+cy^2$, where $a$ corresponds to the norm of a reduced ideal representing its class.  For the sequence $\vec{\gamma}$ we use forms with $a=\ell$ prime, constructed as in \cite[Alg.~3.3]{Buchmann:BinaryQuadraticForms}.  Under the ERH, restricting to $\ell\le 6\log^2|D|$ yields a sequence of generators for $\clD$, by \cite{Bach:ERHBounds}.  To obtain an unconditional result, we precompute $h(D)$ and extend $\vec{\gamma}$ dynamically until Algorithm~2.2 reaches $N=h(D)$.

We initially order the elements $\gamma_i$ of $\vec{\gamma}$ by their norm $\ell_i$, assuming that this reflects the cost of using the action of $\gamma_i$ to enumerate $\EllD(\Fp)$ via Algorithm~1.3 (Section 4.2).  However, for those $\ell_i$ that divide $v(p)$ we may wish to adjust the relative position of $\gamma_i$, since walking the surface of an $\ell_i$-volcano with nonzero depth increases the average cost per step.  We use Proposition~\ref{prop:WalkSurface} to estimate this cost, which may or may not cause us to change the position of $\gamma_i$ in~$\vec{\gamma}$.  In practice just a few (perhaps one) distinct orderings suffice to optimally address every $v(p)$.

Note that we need not consider the relative orders $r_i$ when ordering $\vec{\gamma}$.  If $i$ is less than $j$, then Algorithm~1.3 always takes at least as many steps using $\ell_i$ as it does using $\ell_j$.  Indeed, the running time of Algorithm~1.3 is typically determined by the choice of $\alpha_1$: at least half of the steps will be taken on the surface of an $\ell_1$-volcano, and if $\inkron{D}{\ell_1}=1$, almost all of them will (heuristically).

\subsection{Why not use a basis?}\label{section:PCPvsBAsis}
Using a basis to enumerate $\EllD(\Fp)$ is rarely optimal, and in the worst case it can be a very poor choice.  The ERH does imply that $\clD$ is generated by the classes of ideals with prime norm $\ell\le 6\log^2|D|$, but this set of generators need not contain a basis.  As a typical counterexample, consider
$$D_1=-10007\cdot 10009\cdot 10037,$$
the product of the first three primes greater than $10000$.  The class group has order $h(D_1)=2^2\cdot 44029$, where 44029 is prime, and its 2-Sylow subgroup $H$ is isomorphic to $\Z/2\Z\times\Z/2\Z$.  Every basis for $\cl(D_1)$ must contain a non-trivial element of $H$, and these classes have reduced representatives with norms 10007, 10009, and 10037, all of which are greater than $6\log^2|D_1|\approx 4583$.

By comparison, Algorithm~2.2 computes an optimal polycyclic sequence for $\cl(D_1)$ with $\ell_1=5$ and $\ell_2=37$ (and relative orders $r_1=88058$ and $r_2=2$).

\section{Chinese Remaindering}\label{section:CRT}

As described in Section 2, for each coefficient $c$ of the Hilbert class polynomial we may derive the value of $c\bmod P$ (for any positive integer $P$) from the values $c_i\equiv c \bmod p_i$ appearing in $H_D\bmod p_i$ (for $p_i\in S$), using an explicit form of the Chinese Remainder Theorem (CRT). We apply
\begin{equation}
c \equiv \sum c_ia_iM_i-rM \bmod P,\tag{\ref{equation:explicitCRT}}
\end{equation}
where $M=\prod p_i$, $M_i=M/p_i$, $a_i=M_i^{-1}\bmod p_i$, and $r$ is the closest integer to $s=\sum c_ia_i/p_i$.
Recall that $S\subset\PD$ is chosen so that $M>4B$, where $B$ bounds the coefficients of $H_D$, via Lemma~\ref{lemma:Bbound}.  It suffices to approximate each term in the sum $s$ to within $1/(4n)$, where $n=\#S$.  If $\pmax$ denotes the largest $p_i$, we need $O\bigl(\log(n(\pmax+\log n))\bigr) = O(\log\pmax)$ bits of precision to compute $r$.

To minimize the space required, we accumulate $C=\sum c_ia_iM_i\bmod P$ and an approximation of $s$ as the $c_i$ are computed.  This uses $O(\log P+\log \pmax)$ space per coefficient.  We have $h(D)$ coefficients to compute, yielding
\begin{equation}\label{equation:spacebound}
O\bigl(h(D)(\log P + \log\pmax)\bigr)
\end{equation}
as our desired space bound.

To achieve this goal without increasing the time complexity of our algorithm, we consider two cases: one in which $P$ is small, which we take to mean
\begin{equation}
\log P\le \mu \log^3 |D|,
\end{equation}
for some absolute constant $\mu$, and another in which $P$ is large (not small).  The former case is typical when applying the CM method; $P$ may be a cryptographic-size prime, but it is not unreasonably large.  The latter case most often arises when we actually want to compute $H_D$ over $\Z$.  When $P\ge M$ there is no need to use the explicit CRT and we apply a standard CRT computation.  To treat the intermediate case, where $P$ is large but smaller than $M$, we use a hybrid approach.

The optimal choice of $\mu$ depends on the relative cost of performing $h(D)$ multiplications modulo $P$ versus the cost of computing $H_D \bmod p_i$; we want the former to be small compared to the latter.  In practice, the constant factors allow us to make $\mu$ quite large and the intermediate case rarely arises.

\subsection{Fast Chinese remaindering in linear space}

Standard algorithms for fast Chinese remaindering can be found in \cite[\S 10.3]{Gathen:ComputerAlgebra}.  We apply similar techniques, but use a time/space trade-off to achieve the space bound in (\ref{equation:spacebound}).  These computations involve a \emph{product tree} built from coprime moduli.  In our setting these are the primes $p_i\in S$, which we index here as $p_0,\ldots,p_{n-1}$.

We define a product tree as a leveled binary tree in which each vertex at level $k$ is either a leaf or the product of its two children at level $k+1$ (we require levels to have an even number of vertices and add a leaf to levels that need one).  It is convenient to label the vertices by bit-strings of length $k$, where the root at level 0 is labeled by the empty string and all other vertices are uniquely labeled by appending the string ``0" or ``1" to the label of their parent.

Let $d=\lfloor\lg(n-1)\rfloor+1$ be the number of bits in the positive integer $n-1$.  For integers $i$ from 0 to $n-1$, we let $b(i)\in\{0,1\}^d$ denote the bit-string corresponding to the binary representation of $i$.
The products $m_x$ are defined by placing the moduli in leaves as $m_{b(i)}=p_i$, setting $m_x=1$ for all other leaves, and defining $m_x=m_{x0}m_{x1}$ for all internal vertices.

The modular complements $\overline{m}_x=m/m_x \bmod m_x$ are then obtained by setting $\overline{m}_0=m_1\bmod m_0$ and $\overline{m}_1=m_0\bmod m_1$, and defining
$$\overline{m}_{x0}=\overline{m}_xm_{x1}\bmod m_{x0}\qquad\text{and}\qquad\overline{m}_{x1}=\overline{m}_xm_{x0}\bmod m_{x1}.$$
In terms of $M_i=M/p_i$, we then have $m=M$ and $\overline{m}_{b(i)}=M_i\bmod p_i$.
\smallskip

Let $I_k$ denote the labels at level $k$, for $1\le k\le d$ (and otherwise $I_k$ is empty).  One way to compute $\overline{m}_d$ is as follows:
\vspace{3pt}
\begin{enumerate}
\item
For $k$ from $d$ to 1, compute $m_x$ for $x\in I_k$.
\item
For $k$ from 1 to $d$, compute $\overline{m}_x$ for $x\in I_k$.
\end{enumerate}
\vspace{3pt}
This uses $O(\textsf{M}(\log{M})\log{n})$ time and $O(\log{M}\log{n})$ space.  Alternatively:
\vspace{3pt}
\begin{enumerate}
\item
For $k$ from $1$ to d:
\item
\quad For $j$ from $d$ to $k$, compute $m_x$ for $x\in I_j$ (discard $m_y$ for $y\in I_{j+1}$).
\item
\quad Compute $\overline{m}_x$ for $x\in I_k$ (discard $m_y$ for $y\in I_k$ and $\overline{m}_z$ for $z\in I_{k-1}$). 
\end{enumerate}
\vspace{3pt}
This uses $O(\textsf{M}(\log{M})\log^2{n})$ time and $O(\log{M})$ space.  In general, storing $\lceil\log^\omega{n}\rceil$ levels uses $O(\textsf{M}(\log{M})\log^{2-\omega}{n})$ time and $O(\log{M}\log^\omega{n})$ space, for $0\le\omega\le1$.
\smallskip


\subsection{Applying the explicit CRT when $P$ is small}\label{subsection:SmallP}
Assume $\log P\le \mu\log^3|D|$.  We index the set $S\subset\PD$ as $S=\{p_0,\ldots,p_{n-1}\}$ and let $M=\prod p_i$ and $M_i=M/p_i$.  As above, we define products $m_x$ and modular complements $\overline{m}_x=m/m_x\bmod m_x$, and similarly define modular complements $\overline{m}_x'=m/m_x \bmod P$.

\algstart{{\bf 2.3 (precompute)}}{Given $S=\{p_0,\ldots,p_{n-1}\}$ and $P$:}
\algitem
Compute $\overline{m}_x$ and $\overline{m}_x'$.  Save $M\bmod P$.
\algitem
Use $\overline{m}_{b(i)}\equiv M_i\bmod p_i$ to set $a_i\leftarrow M_i^{-1}\bmod p_i$.
\algitem
Use $\overline{m}_{b(i)}'\equiv M_i\bmod P$ to set $d_i\leftarrow a_iM_i\bmod P$.
\algitem
Set $C_j\leftarrow 0$ and $s_j\leftarrow 0$ for $j$ from 0 to $h(D)$.
\algend
Using the time/space trade-off described above, Algorithm~2.3 has a running time of
$O(\textsf{M}(\log M)\log^2 n)$, using $O(\log M+n\log P)$ space.

We now set $\delta=\lceil\lg{n}\rceil+2$, which determines the precision of the integer $s_j\approx 2^{\delta}r$ we use to approximate the rational number $r$ in (\ref{equation:explicitCRT}).
\smallskip

\algstart{{\bf 2.4 (update)}}{Given $H_D\bmod p_i$ with coefficients $c_j$:}
\algitem 
For $j$ from 0 to $h(D)$:
\algitem
\qquad Set $C_j\leftarrow C_j+c_jd_i \bmod P$.
\algitem
\qquad Set $s_j\leftarrow s_j + \lfloor 2^\delta c_ja_i/p_i\rfloor$.
\algend
The total running time of Algorithm~2.4 over all $p_i\in S$ may be bounded by
\begin{equation}\label{equation:CRTupdate}
O\bigl(nh(D)\textsf{M}(\log P)+h(D)\textsf{M}(\log M+n\log n)\bigr).
\end{equation}
Typically the first term dominates, and it is here that we need $\log P=O(\log^3|D|)$.  The space complexity is $O(h(D)(\log{P}+\log\pmax+\log n))$.

\algstart{{\bf 2.5 (postcompute)}}{After computing $H_D\bmod p_i$ for all $p_i\in S$:}
\algitem 
For $j$ from 0 to $h(D)$:
\algitem
\qquad
Set $C_j\leftarrow C_j-\lfloor3/4+ 2^{-\delta}s_j\rfloor M\bmod P$.
\algitem Output $H_D\bmod P$ with coefficients $C_j$.
\algend
Algorithm~2.5 uses $O(h(D)\textsf{M}(\log P))$ time and $O(h(D)\log P)$ space.  The formulas used by Algorithms 2.4 and 2.5 are taken from \cite[Thm. 2.2]{Bernstein:ModularExponentiation} (also see \cite{Bernstein:ExplicitCRT}).

\subsection{Applying the CRT when $P$ is large}\label{subsection:LargeP}

When $P$ is larger than $M$, we simply compute $H_D\in\Z[X]$ using a standard application of the CRT.  That is, we compute $H_D\bmod p_i$ for $p_i\in S$, and then apply
\begin{equation}\tag{\ref{equation:CRT}}
c\equiv\sum c_ia_iM_i\bmod M
\end{equation}
to compute each coefficient of $H_D$ using fast Chinese remaindering \cite[\S 10.3]{Gathen:ComputerAlgebra}.  Since $M>2B$, this determines $H_D\in\Z[X]$.  Its coefficients lie in the interval $(-P/2,P/2)$, so we regard this as effectively computing $H_D\bmod P$.  The total time spent applying the CRT is then $O(h(D)\textsf{M}(\log M)\log n)$, and the space needed to compute $(\ref{equation:CRT})$ is $O(\log M\log n)$, which is easily smaller than the $O(h(D)\log M)$ bound on the size of $H_D$ (so no time/space trade-off is required).

When $P$ is smaller than $M$ but $\log P > \mu \log^3|D|$, we combine the two CRT approaches.  We group the primes $p_0,\ldots,p_{n-1}$ into products $q_0,\ldots,q_{k-1}$ so that $\log q_j\approx \log P$ (or $q_j>\log P$ is prime).  We  compute $H_D\bmod q_j$ by applying the usual CRT to the coefficients of $H_D\bmod p_i$, after processing all the $p_i$ dividing~$q_j$.  If $q_j$ is prime no work is involved, and otherwise this takes $O(\textsf{M}(\log P)\log{n})$ time per coefficient.  We then apply the explicit CRT to the coefficients of $H_D\bmod q_j$, as in Section \ref{subsection:SmallP}, discarding the coefficients of $H_D\bmod q_j$ after they have been processed by Algorithm~2.4.  This hybrid approach has a time complexity of
\begin{equation}\label{equation:CRThybrid}
O(h(D)(\log M/\log P)\M(\log P)\log n) = O(h(D)\M(\log M)\log n),
\end{equation}
and uses $O\bigl(h(D)(\log{P}+\log \pmax)\bigr)$ space.

\section{Complexity Analysis}\label{section:Complexity}
We now analyze the complexity of Algorithms 1 and 2, proving Theorem~1 through a series of lemmas.
To do so, we apply various number-theoretic bounds that depend on some instance of the extended or generalized Riemann hypothesis.
We use the generic label ``GRH" to identify all statements that depend (directly or indirectly) on one or more of these hypotheses.
As noted in the introduction, the GRH is used only to obtain complexity bounds, the outputs of Algorithms 1 and 2 are unconditionally correct.

Let $\M(n)$ denote the cost of multiplication, as defined in \cite[Ch.~8]{Gathen:ComputerAlgebra}.  We have
\begin{equation}\label{equation:multbound}
\M(n)=O(n\log{n}\llog n),
\end{equation}
by \cite{Schonhage:Multiplication}, where $\llog(n)$ denotes $\log\log n$ (and we use $\lllog(n)$ to denote $\log\log\log n$).
We focus here on asymptotic results and apply (\ref{equation:multbound}) throughout, noting that the larger computations
in Section~\ref{section:Performance} make extensive use of algorithms that realize this bound.  See Section ~\ref{subsection:heuristics} for a practical discussion of $\M(n)$.

Let us recall some key parameters.  For a discriminant $D<-4$, we define
\begin{equation}
\PD = \{p>3\medspace {\rm prime}: 4p=t^2-v^2D \medspace\text{for some} \medspace t, v\in \Z_{>0}\},\tag{\ref{equation:PD}}
\end{equation}
where $t=t(p)$ and $v=v(p)$ are uniquely determined by $p$.  We select a subset
$$S\subseteq S_z=\{p\in\PD:p/H(-v(p)^2D) \le z\},$$
that satisfies $\prod_{p\in S}p > 4B$, where $B$ bounds the absolute values of the coefficients of $H_D$.
We also utilize prime norms $\ell_1,\ldots,\ell_k$ arising in a polycyclic presentation of $\cl(D)$ that is derived from a set of generators.
\medskip

\noindent
(\textbf{GRH}) For convenient reference, we note the following bounds:

\smallskip
\renewcommand\theenumi{\roman{enumi}}
\renewcommand\labelenumi{(\theenumi)}
\begin{enumerate}
\item
$h = {h(D)}=O(|D|^{1/2}\llog|D|)$ (see \cite{Littlewood:ClassNumber}).
\vspace{6pt}
\item
${b}=\lg{B}+2=O(|D|^{1/2}\log|D| \llog|D|)$ (Lemma~\ref{lemma:Bbound}).
\vspace{6pt}
\item
${n}=\#S=O(|D|^{1/2}\llog|D|)$ (follows from (ii)).
\vspace{6pt}
\item
${\lmax}=\max\{\ell_1,\ldots,\ell_k\}=O(\log^2|D|)$ (see \cite{Bach:ERHBounds}).
\vspace{6pt}
\item
${z}=O(|D|^{1/2}\log^3|D|\llog|D|)$ (Lemma~\ref{lemma:rbound}).
\vspace{6pt}
\item
${\pmax}=\max{S}=O(|D|\log^6|D|\llog^8|D|)$ (Lemma \ref{lemma:pvmax}).
\vspace{6pt}
\item
${\vmax}=\max\{v(p):p\in S\}=O(\log^3|D|\llog^4|D|)$ (Lemma \ref{lemma:pvmax}).
\end{enumerate}
\medskip

The first three parameters have unconditional bounds that are only slightly larger (see \cite[\S 5.1]{Belding:HilbertClassPolynomial}), but the last four depend critically on either the ERH or GRH.  Heuristic bounds are discussed in Section \ref{subsection:heuristics}.

To prove (v) we use an effective form of the Chebotarev density theorem \cite{Lagarias:Chebotarev}.
Recall that $\PD$ is the set of primes (greater than 3) that split completely in the ring class field $K_\O$ of $\O$.
For a positive real number $x$, let $\pi_1(x,K_\O/\Q)$ count the primes $p\le x$ that split completely in $K_\O$.
Equivalently, $\pi_1(x,K_\O/\Q)$ counts primes whose image in $\Gal(K_\O/\Q)$ under the Artin map is the identity element \cite[Cor.~5.21]{Cox:ComplexMultiplication}.  Applying Theorem~1.1 of \cite{Lagarias:Chebotarev} then yields
\begin{equation}\label{equation:Chebotarev}
\left\vert\pi_1(x,K_\O/\Q)-\frac{{\rm Li}(x)}{2h(D)}\right\vert\le
c_1\left(\frac{x^{1/2}\log\left(|D|^{h(D)}x^{2h(D)}\right)}{2h(D)} + \log(|D|^{h(D)})\right),
\end{equation}
as in \cite[Eq.~3]{Belding:HilbertClassPolynomial}, where the constant $c_1$ is effectively computable.

\begin{lemma}[\textbf{GRH}]\label{lemma:rbound}
For any real constant $c_3$ there is an effectively computable constant~$c_2$ such that
$z\ge c_2h(D)\log^3|D|$ implies $\#S_z \ge c_3h(D)\log^3|D|$.
\end{lemma}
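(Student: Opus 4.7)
The strategy is to reduce to counting primes in $\PD$ below a suitable cutoff $x$, apply the effective Chebotarev density theorem~(\ref{equation:Chebotarev}) under the GRH, and then verify that the main term dominates the error whenever $c_2$ is chosen large enough.

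First, I would observe that $H(-v^2 D) \ge H(-D)$ for every positive integer $v$ (monotonicity of Hurwitz class numbers across conductors follows from the standard class-number formula for non-maximal orders). Hence every prime $p\in\PD$ with $p \le x := zH(-D)$ satisfies $p/H(-v(p)^2 D) \le z$ and therefore lies in $S_z$. Since $\PD$ coincides, up to finitely many small primes, with the set of rational primes that split completely in the ring class field $K_\O$, this gives
\[
\#S_z \ge \pi_1(x,K_\O/\Q) - O(1).
\]

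Second, I would invoke (\ref{equation:Chebotarev}) at $x = zH(-D)$. Using $\log(|D|^{h(D)}x^{2h(D)}) = h(D)(\log|D|+2\log x)$ the two error terms simplify to
\[
\pi_1(x,K_\O/\Q) \ge \frac{\Li(x)}{2h(D)} - c_1'\bigl(x^{1/2}\log(|D|x) + h(D)\log|D|\bigr),
\]
with $c_1'$ an effectively computable constant depending only on the absolute constant of (\ref{equation:Chebotarev}).

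Third, I would show the main term dominates. Using $\Li(x) \ge x/\log x$, the GRH bound $h(D) = O(|D|^{1/2}\llog|D|)$ (which keeps $x$ polynomial in $|D|$ and forces $\log x = O(\log|D|)$), and $H(-D)\ge h(D)$, a short calculation gives the lower bound
\[
\frac{\Li(x)}{2h(D)} \gg \frac{z}{\log|D|},
\]
from which a $c_2$ taken sufficiently large in terms of $c_3$ and the absolute constants makes the main term exceed $2c_3 h(D)\log^3|D|$, while the error term $x^{1/2}\log(|D|x)$ is controlled by the square-root savings and $h(D)\log|D|$ is trivially absorbed. If the naive cutoff at $zH(-D)$ falls short by a logarithmic factor, one may recover the extra room by summing the Chebotarev count over primes with distinct values of $v(p)$: for each $v\ge 1$ the permissible cutoff rises to $zH(-v^2 D)$, and the estimate $H(-v^2 D)\gg vh(D)/\llog v$ yields additional primes that still lie in $S_z$.

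The main obstacle is the careful log-bookkeeping in the third step, where both Chebotarev error terms must be pinned below the main term and the final bound verified to reach $c_3 h(D)\log^3|D|$; the GRH is needed both to control $h(D)$ in the exponent of $|D|$ (so that $\log x = O(\log|D|)$) and to supply the effective Chebotarev error in~(\ref{equation:Chebotarev}). Effectiveness of the resulting constant $c_2$ is automatic, since each ingredient (Chebotarev, the approximation $\Li(x)\sim x/\log x$, and Littlewood's class-number bound) is itself effective under the GRH.
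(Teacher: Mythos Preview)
Your naive cutoff $x=zH(-D)$ does not deliver the lemma, and the proposed recovery step is where the real content of the proof lies---it cannot be left as a hedge.

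With $x=zH(-D)$ and $z=c_2h(D)\log^3|D|$ one has $x\asymp c_2h(D)^2\log^3|D|$, hence $\log x\asymp\log|D|$ and
\[
\frac{\Li(x)}{2h(D)}\ \asymp\ \frac{c_2\,h(D)\log^3|D|}{\log|D|}\ =\ c_2\,h(D)\log^2|D|,
\]
which is a full factor of $\log|D|$ below the target $c_3h(D)\log^3|D|$; no constant $c_2$ can close this gap. Worse, at this same cutoff the Chebotarev error term in~(\ref{equation:Chebotarev}) is of order $x^{1/2}\log(|D|x)\asymp\sqrt{c_2}\,h(D)\log^{5/2}|D|$, which \emph{dominates} the main term for all sufficiently large $|D|$ regardless of $c_2$. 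So the ``short calculation'' in your third step does not go through, and the error is not ``controlled by the square-root savings.''

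The fix is not to sum Chebotarev counts over~$v$ (these sets overlap, since primes with $v\mid v(p)$ are exactly those splitting completely in the ring class field for $v^2D$, and inclusion--exclusion introduces its own error-term headaches). The paper instead takes a \emph{larger} cutoff $x=c_0h(D)^2\log^4|D|$, so that both main and error terms in~(\ref{equation:Chebotarev}) are of order $h(D)\log^3|D|$, with coefficients $\sim c_0/\log c_0$ versus $\sim\sqrt{c_0}\log c_0$; for large $c_0$ the main term wins and one obtains $\gtrsim c_4h(D)\log^3|D|$ primes in $\PD$ below $x$. The remaining (and essential) step is a pigeonhole argument: since at most $2\sqrt{x}$ primes can share any fixed value of $v(p)$, at least half of these primes must have $v(p)\ge v_0$ with $v_0\gg\log|D|$. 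For those primes the lower bound $H(-v(p)^2D)\ge v(p)H(-D)$ from Lemma~\ref{lemma:vbound} gives
\[
\frac{p}{H(-v(p)^2D)}\ \le\ \frac{x}{v_0\,H(-D)}\ \ll\ h(D)\log^3|D|,
\]
placing them in $S_z$ once $z\ge c_2h(D)\log^3|D|$. This pigeonhole step---forcing $v(p)$ to be large on a positive proportion of the primes---is the idea your outline is missing.
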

\begin{proof}
Let $h=h(D)$.  We apply (\ref{equation:Chebotarev}) to $x=c_0h^2\log^4|D|$, with $c_0$ to be determined.  We assume $D<-4$ and $\log c_0 \ge 2$, which implies $\log x < 4\log c_0\log|D|$ (using $h<|D|$ and $\log|D|<|D|^{1/2}$), and $\Li(x) > x/\log x$, for all $x\ge 1$.  Negating the expression within the absolute value, we obtain from (\ref{equation:Chebotarev}) the inequality
$$
\pi_1(x,K_\O/\Q)\ge \left(\frac{c_0}{8\log c_0} - 5c_1\sqrt{c_0}\log c_0\right)h\log^3|D|.
$$
Thus given any constant $c_4$ we may effectively determine $c_0\ge e^2$ (using $c_1$) so that
$$
\pi_1(x,K_\O/\Q) \sge c_4h\log^3|D|.
$$
For the set $R_x$ of primes in $\PD$ bounded by $x$, we have $\#R_x=\pi_1(x,K_\O/\Q)-2$.

Let $v_0$ be the least integer such that at least half the primes in $R_x$ have $v(p)\le v_0$.
There are $v_0$ positive integers less than or equal to $v_0$, and any particular value $v(p)\le v_0$ can arise for
at most $2\sqrt{x}$ primes $p\in R_x$, since $t(p) < 2\sqrt{p}\le 2\sqrt{x}$.
Therefore $2v_0\sqrt{x} \ge \#R_x /2$, and this implies
$$
2v_0\sqrt{c_0}h\log^2|D|\sge (c_4h\log^3|D|-2)/2 \sgt (c_4/2-1)h\log^3|D|.
$$
We thus obtain $v_0 > c_5\log|D|$, where $c_5 = (c_4/2-1)/\sqrt{4c_0}$, and assume $c_4 > 2$.

For primes $p\in R_x$ with $v(p)\ge v_0$, the lower bound in Lemma~\ref{lemma:vbound} implies
$$
\frac{p}{H(-v(p)^2D)}\sle \frac{p}{v(p)H(-D)} \sle \frac{x}{c_5h\log|D|}\seq(c_0/c_5)h\log^3|D|.
$$
If $z\ge c_2h\log^3|D|$, with $c_2=c_0/c_5$, then $S_z$ contains at least half the primes in~$R_x$.
Setting $c_4=\max\{2c_3+2,3\}$ determines $c_0$, $c_5$, and $c_2$, and completes the proof.
\end{proof}

The primes $p\in S_z$ are enumerated by Algorithm~2.1 (Section~\ref{subsection:PickPrimes}), which gradually increases $z$ until $\sum_{p\in S_z} \lg p > 2b$, where $b=\lg B + 2$.

\begin{lemma}[\textbf{GRH}]\label{lemma:pvmax}
When {\rm Algorithm}~$2.1$ terminates, for every prime $p\in S_z$ we have the bounds $p=O(|D|\log^6|D|\llog^8|D|)$ and $v(p)=O(\log^3|D|\llog^4|D|)$.
\end{lemma}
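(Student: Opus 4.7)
The plan is to bound $z$ at termination of Algorithm~2.1 via Lemma~\ref{lemma:rbound}, then translate this into bounds on $v(p)$ via Lemma~\ref{lemma:vbound} of Appendix~1 and on $p$ via the inequality $p \le zH(-v(p)^2D)$. For the $z$-bound, I would apply Lemma~\ref{lemma:rbound} with the constant $c_3 = 1$, say: this yields a constant $c_2$ such that $z \ge c_2 h(D)\log^3|D|$ forces $\#S_z \ge h(D)\log^3|D|$. Every prime of $S_z$ is at least $5$, so $\sum_{p\in S_z}\lg p \ge 2\#S_z \ge 2h(D)\log^3|D|$. Under the GRH one has $h(D) \gg |D|^{1/2}/\llog|D|$, and Lemma~\ref{lemma:Bbound} gives $b = O(|D|^{1/2}\log|D|\llog|D|)$; hence $h(D)\log^3|D|/b \to \infty$, and for $|D|$ sufficiently large the termination condition $\sum_{p\in S_z}\lg p > kb$ of Step~3 holds once $z \ge c_2 h(D)\log^3|D|$. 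Because Algorithm~2.1 grows $z$ by only the factor $(1+\delta)$ between iterations, the terminal $z$ satisfies $z \le (1+\delta) c_2 h(D)\log^3|D| = O(|D|^{1/2}\log^3|D|\llog|D|)$, which is item~(v) of the GRH parameter list.

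For the $v(p)$-bound, any $p \in S_z$ with $v = v(p)$ satisfies both $p/H(-v^2D) \le z$ and $4p \ge -v^2D$, hence $-v^2D < 4zH(-v^2D)$, which is exactly the inequality~(\ref{equation:vbound}). The contrapositive of~(\ref{equation:v0bound}) provided by Lemma~\ref{lemma:vbound} then yields
$$\frac{v}{(\llog(v+4))^2} < \frac{44zH(-D)}{|D|}.$$
Substituting the $z$-bound from the first step together with $H(-D) = O(|D|^{1/2}\llog|D|)$ gives $v/(\llog(v+4))^2 = O(\log^3|D|\llog^2|D|)$. This already forces $v$ to be polylogarithmic in $|D|$, so $\llog(v+4) = O(\llog|D|)$, and the stated bound $v = O(\log^3|D|\llog^4|D|)$ follows by isolating $v$.

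For the $p$-bound, I would combine $p \le zH(-v^2D)$ with the classical multiplicative formula
$$H(-v^2D) = vH(-D)\prod_{\ell|v}\!\left(1 - \inkron{D}{\ell}/\ell\right)$$
and the standard estimate $\prod_{\ell|v}(1+1/\ell) = O(\llog v)$; substituting the $z$-, $v$-, and $H(-D)$-bounds then yields $p = O(|D|\log^6|D|\llog^8|D|)$. The main obstacle is the first step: comparing $kb$ against $2h(D)\log^3|D|$ rigorously requires both the GRH lower bound on $h(D)$ and the explicit form of the $b$-bound from Lemma~\ref{lemma:Bbound}, and the polylogarithmic factors have to be tracked carefully to justify that the fixed choice $c_3 = 1$ suffices for all large $|D|$. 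Once the $z$-bound is established, the two subsequent steps are direct substitutions into the cited formulae.
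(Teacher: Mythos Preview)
Your overall strategy matches the paper's: bound the terminal value of $z$ via Lemma~\ref{lemma:rbound}, then translate this into bounds on $v(p)$ and $p$ through the inequality $p\le zH(-v(p)^2D)$.  Your explicit invocation of the GRH lower bound $h(D)\gg |D|^{1/2}/\llog|D|$ to compare $h(D)\log^3|D|$ against $b$ is a careful point the paper glosses over, and your argument there is correct.

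Two technical corrections are needed in the final step.  First, the identity
\[
H(-v^2D)=vH(-D)\prod_{\ell\mid v}\bigl(1-\tfrac{1}{\ell}\bigl(\tfrac{D}{\ell}\bigr)\bigr)
\]
is the formula for the ordinary class number $h$, not the Hurwitz class number $H$; for $H$ one has a sum over divisors (see the proof of Lemma~\ref{lemma:vbound}), and the correct tool is the upper bound $H(-v^2D)\le 11\,vH(-D)\llog^2(v+4)$ from that lemma.  Second, the estimate $H(-D)=O(|D|^{1/2}\llog|D|)$ is not among the paper's listed bounds and need not hold for non-fundamental $D$ (the extra sum over divisors of the conductor can cost a further $\llog$ factor).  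The paper avoids both issues by applying Lemma~\ref{lemma:vbound} directly to the fundamental discriminant $D_K$ with multiplier $uv$, obtaining $H(-v^2D)\le c_2 v|D|^{1/2}\llog|D|\llog^2(v|D|)$ in one step.

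There is also a difference in the order of the deductions: you bound $v$ first via the contrapositive of~(\ref{equation:v0bound}) and then substitute into $p\le zH(-v^2D)$, whereas the paper substitutes $v(p)\le 2\sqrt{p/|D|}$ into $p\le zH(-v^2D)$ to get a self-referential inequality for $p$, solves it, and reads off the $v$-bound afterward.  Both routes work; the paper's ordering happens to keep the $\llog$ bookkeeping slightly tighter and lands exactly on the stated exponents.
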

\begin{proof}
Let $D=u^2D_K$, where $u$ is the conductor of $D$.  The upper bound in Lemma~\ref{lemma:vbound}, together with the bound (i) on $h(D)$, implies that for a suitable constant $c_2$ and sufficiently large $|D|$, the bound
$$
H(-v^2D) \sle 12uvH(-D_K)\llog^2(uv+4)) \sle c_2v|D|^{1/2}\llog|D|\llog^2(v|D|)
$$
holds for all positive integers $v$.

Lemma~\ref{lemma:rbound}, together with bounds (i) and (ii), implies that Algorithm~2.1 achieves
$\sum_{p\in S_z} \lg p > 2b$ with $z=O(h(D)\log^3|D|)=O(|D|^{1/2}\log^3|D|\llog|D|)$.  Thus
for a suitable constant $c_3$ and sufficiently large $|D|$, the bound
\begin{equation}\label{equation:pSzbound}
p \sle zH(-v(p)^2D) \sle c_3v(p)|D|\log^3|D|\llog^2|D|\llog^2(v(p)|D|)
\end{equation}
holds for all $p\in S_z$.  We also have $v(p)\le 2\sqrt{p/|D|}$, since $4p=t(p)^2-v(p)^2D$.
Applying this inequality to (\ref{equation:pSzbound}) yields $p=O(|D|\log^6|D|\llog^8|D|)$, which then implies $v=O(\log^3|D|\llog^4|D|)$.
\end{proof}

We could obtain tighter bounds on $\pmax$ and $\vmax$ by modifying Algorithm~2.1 to only consider primes in $R_x\cap S_z$, but there is no reason to do so.  Larger primes will be selected for $S$ only when they improve the performance.

To achieve the space bound of Theorem~1, we assume a time/space trade-off is made in the implementation of Algorithm~2.1.
We control the space used to find the primes in $S_z$, by sieving within a suitably narrow window.  This increases
the running time by a negligible poly-logarithmic factor.

\begin{lemma}[\textbf{GRH}]\label{lemma:sieve}
The expected running time of {\rm Algorithm}~$2.1$ is $O(|D|^{1/2+\epsilon})$, using $O(|D|^{1/2}\log|D|\llog|D|)$ space.
\end{lemma}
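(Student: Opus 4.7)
The plan is to show that Step~2 (enumerating $S_z$) is the dominant cost and that its time is $O(|D|^{1/2+\epsilon})$, with all remaining steps absorbed into this bound and all auxiliary data structures fitting in $O(|D|^{1/2}\log|D|\llog|D|)$ bits.

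First I would control the outer loop. The initial value $z = -D/(2H(-D))$ is $O(|D|^{1/2+\epsilon})$ by bound~(i), while Lemma~\ref{lemma:rbound} guarantees termination once $z = \Omega(h(D)\log^3|D|)$. Since $z$ is multiplied by $(1+\delta)$ each iteration, the loop runs only $O(\log|D|)$ times, and Lemma~\ref{lemma:pvmax} then bounds $\vmax$ and $\pmax$ throughout the computation (up to absolute constants).

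Next I would bound the sieve in Step~2 for a fixed~$z$. For each $v \le \vmax$ I sieve the polynomial $t^2 - v^2D$ over $|t| \le 2\sqrt{\pmax} = O(|D|^{1/2+\epsilon})$, using a sliding window of width $W = \lceil |D|^{1/2}\log|D|\llog|D|\rceil$ (to cap the space) together with a sieve base of primes up to $B_s = \log^{c}|D|$ for a suitable constant~$c$. Each window contributes $O(W\log\log B_s)$ sieving time, after which each surviving candidate receives a GRH-conditional deterministic Miller test at cost $O(\log^{3+\epsilon}|D|)$. Summed over windows, over $v$, and over the $O(\log|D|)$ outer iterations, the total sieving time is
\begin{equation*}
O\bigl(\vmax\sqrt{\pmax}\cdot\log^{O(1)}|D|\bigr) \seq O(|D|^{1/2+\epsilon})
\end{equation*}
by bounds~(vi) and~(vii). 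For each prime $p$ found I would evaluate $H(-v^2D)$ via the Appendix~1 formula (a divisor sum in $v$ applied to $h(D_K)$, already on hand from the precomputation of $h(D)$) and keep $p$ only if $p/H(-v^2D)\le z$; this check is $\log^{O(1)}|D|$ per prime.

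The remaining steps contribute only lower-order costs: Step~4 sorts $\#S_z = O(|D|^{1/2+\epsilon})$ primes by cost/benefit ratio using the precomputed torsion table of Appendix~2 at $\log^{O(1)}|D|$ work per comparison, and Steps~3 and~5 are a single summation and a linear scan. For the space bound, the sieve window occupies $O(W)$ bits and the list $S_z$ with entries of bit-length $O(\log|D|)$ occupies another $O(|D|^{1/2}\log|D|\llog|D|)$ bits by bound~(iii); the sieve base, the current $(v,t)$ pointer, and auxiliary arithmetic variables are negligible by comparison. The main obstacle I anticipate is the simultaneous calibration of $W$ and $B_s$: we need the per-position sieve overhead to be polylogarithmic, the density of surviving candidates to be low enough that primality testing is not the bottleneck, and the window width to respect the target space bound. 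All three requirements become compatible only because the statement absorbs a $|D|^\epsilon$ slack factor, which is exactly what lets the sieve base and the Miller test be handled generically rather than optimized.
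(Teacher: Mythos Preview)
Your time analysis is sound but follows a different route than the paper. The paper does a \emph{full} sieve of $t^2-v^2D$ using all primes $\ell\le L$ with $L=O(|D|^{1/2}\log^{3+\epsilon}|D|)$; the dominant cost there is computing a square root of $-v^2D$ modulo each sieving prime $\ell$ (done probabilistically), which is recomputed across windows to respect the space bound. You instead use a small sieve base $B_s=\log^c|D|$ and finish each survivor with a Miller test. Both fit under $O(|D|^{1/2+\epsilon})$ thanks to the $\epsilon$-slack, and your version trades the square-root bookkeeping for a flood of primality tests (the survivor density after your partial sieve is only $\Theta(1/\log\log|D|)$, so testing \emph{is} the bottleneck, contrary to your closing remark---but it is still polylogarithmic per position, so the bound survives).

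There is, however, a genuine gap in your space argument. You bound the storage for $S_z$ by invoking bound~(iii), but that bound is for $n=\#S$, the \emph{selected} subset, not for $\#S_z$. Since Algorithm~2.1 multiplies $z$ by $1+\delta$ and stops the first time $\sum_{p\in S_z}\lg p$ exceeds $kb$, the terminal $S_z$ can overshoot: nothing in your outline prevents $\#S_z$ from being as large as $\Theta(|D|^{1/2}\log^{O(1)}|D|)$, which would blow the stated $O(|D|^{1/2}\log|D|\llog|D|)$ space bound. The paper patches this with an explicit backtracking step: if the new $S_z$ exceeds $4b$ bits, halve $\delta$ and retry, guaranteeing $\sum_{p\in S_z}\lg p\le 4b$ at termination and hence $\#S_z=O(b/\log|D|)=O(|D|^{1/2}\llog|D|)$. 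You need either this device or a direct argument that a single $(1+\delta)$-step cannot inflate $\#S_z$ beyond the target; as written, the space claim is not justified.
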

\begin{proof}
When computing $S_z$, it suffices to consider $v$ up to an $O(\log^{3+\epsilon}|D|)$ bound, by Lemma \ref{lemma:pvmax} above.  For each $v$ we sieve the polynomial $f(t)=t^2-v^2D$ to find $f(t)=4p$ with $p$ prime.  The bound on $p$ implies that we need only sieve to an $L=O(|D|^{1/2}\log^{3+\epsilon}|D|)$ bound on $t$.  We may enumerate the primes up to $L$ in $O(L\llog{L})$ time using $O(\sqrt{L}\log L)=O(|D|^{1/4+\epsilon})$ space (we sieve with primes up to $\sqrt{L}$ to identify primes up to $L$ using a window of size $\sqrt{L}$).

For each of the $\pi(L)$ primes $\ell\le L$, we compute a square root of $-v^2D$ modulo~$\ell$ probabilistically, in expected time $O(\M(\log \ell)\log \ell)$, and use it to sieve $f(t)$.  Here we sieve using a window of size $O(|D|^{1/2}\log|D|\llog|D|)$, recomputing each square root $O(\log^{2+\epsilon}|D|)$ times in order to achieve the space bound.

For each $v$, the total cost of computing square roots is
$O(\pi(L)\log^{4+\epsilon}|D|)$, which dominates the cost of sieving.  Applying $\pi(L)=O(L/\log{L})$ and summing over $v$ yields $O(|D|^{1/2}\log^{9+\epsilon}|D|)$, which dominates the time to select $S\subset S_z$.

To stay within the space bound, if we find that increasing $z$ in Step 3 by a factor of $1+\delta$ causes $S_z$ to be too large (say, greater than $4b$ bits), we backtrack and instead increase $z$ by a factor of $1+\delta/2$ and set $\delta\leftarrow\delta/2$.  We increase $z$ a total of $O(\log|D|)$ times (including all backtracking), and the lemma follows.
\end{proof}
In practice we don't actually need to make the time/space tradeoff described in the proof above.
Heuristically we expect $\pmax = O(|D|\log^{1+\epsilon}|D|)$, and in this case all the primes in $S_z$ can be found in a single pass with $L=O(|D|^{1/2}\log^{1/2+\epsilon}|D|)$.

We now show that all the precomputation steps in Algorithm 2 take negligible time and achieve the desired space bound.
This includes selecting primes (Algorithm~2.1 in Section \ref{subsection:PickPrimes}), computing polycyclic presentations (Algorithm~2.2 in Section \ref{subsection:ComputePresentation}), and CRT precomputation (Algorithm 2.3 in Section \ref{subsection:SmallP}).

\begin{lemma}[\textbf{GRH}]\label{lemma:precompute}
Steps $1$, $2$, and $3$ of {\rm Algorithm} $2$ take $O(|D|^{1/2+\epsilon})$ expected time and use
$O(|D|^{1/2}(\log|D|+\log P)\llog|D|)$ space.
\end{lemma}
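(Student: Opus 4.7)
The plan is to treat each of the three steps separately, invoke the complexity results already established for each one, and then bound the totals by the parameter estimates (i)--(vii). The time bound will follow easily; the only real arithmetic is in verifying that the space usages all fit under the common envelope $O(|D|^{1/2}(\log|D|+\log P)\llog|D|)$.

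For Step~1, I would simply cite Lemma~\ref{lemma:sieve}, which gives expected time $O(|D|^{1/2+\epsilon})$ and space $O(|D|^{1/2}\log|D|\llog|D|)$ for Algorithm~2.1. This is clearly within both desired bounds.

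For Step~2, I would use the analysis of Algorithm~2.2 from Section~\ref{subsection:ComputePresentation}: Proposition~\ref{prop:pcp} shows that a single polycyclic presentation of $\clD$ is computed with $h(D)$ nontrivial group operations, each on primitive reduced binary quadratic forms of $O(\log|D|)$-bit coefficients; this takes $O(h(D)\log^{1+\epsilon}|D|) = O(|D|^{1/2+\epsilon})$ time and stores the table $T$ in $O(h(D)\log|D|) = O(|D|^{1/2}\log^{1+\epsilon}|D|)$ space. The orderings we need vary only with $v(p)$, and by Lemma~\ref{lemma:pvmax} there are at most $\vmax = O(\log^{3+\epsilon}|D|)$ distinct values of $v(p)$, so computing one presentation per such $v(p)$ still costs $O(|D|^{1/2+\epsilon})$ time; we discard the table $T$ after recording $r(\vec{\gamma})$ and $s(\vec{\gamma})$ for each ordering, so the space is dominated by storing the $O(\vmax)$ presentation vectors, each of length $O(\log^2|D|)$ entries of $O(\log|D|)$ bits, well within the target.

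For Step~3, I would plug the parameters into the time/space trade-off of Section~\ref{subsection:SmallP}. We have $\log M \le 2b = O(|D|^{1/2}\log|D|\llog|D|)$ and $n = O(|D|^{1/2}\llog|D|)$. The product tree computation of the modular complements $\overline{m}_x$ and $\overline{m}_x'$, done level by level without retaining intermediate levels, runs in $O(\M(\log M)\log^2 n) = O(|D|^{1/2+\epsilon})$ time and $O(\log M) = O(|D|^{1/2}\log|D|\llog|D|)$ space. Deriving the $a_i$ and the values $d_i = a_iM_i \bmod P$ then costs $O(n)$ modular inversions and multiplications, totaling $O(n\log^{1+\epsilon}(\pmax + P)) = O(|D|^{1/2+\epsilon}+|D|^{1/2}\log P \llog|D|)$ time, and their storage is $O(n(\log\pmax + \log P)) = O(|D|^{1/2}(\log|D|+\log P)\llog|D|)$ space. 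Initializing the $C_j$ and $s_j$ accumulators, with $h(D) = O(|D|^{1/2}\llog|D|)$ entries of $O(\log P + \log\pmax + \log n) = O(\log|D|+\log P)$ bits each, fits in the same envelope.

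The step I expect to require the most care is Step~3, because keeping the precomputation inside the claimed space bound forces the time/space trade-off in the product tree and the careful tracking of how many $O(\log P)$-bit words get stored. Once those bookkeeping bounds are in place, combining the three steps gives the stated total, and the time bound is dominated by the polynomial-in-$\log|D|$ overhead on $|D|^{1/2}$ throughout.
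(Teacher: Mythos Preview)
Your proposal is correct and follows essentially the same approach as the paper: cite Lemma~\ref{lemma:sieve} for Step~1, invoke Proposition~\ref{prop:pcp} and the $\vmax$ bound for Step~2, and plug $\log M\le n\log\pmax$ together with bounds (i), (iii), (vi) into the stated $O(\M(\log M)\log^2 n)$ time and $O(\log M+n\log P)$ space complexity of Algorithm~2.3 for Step~3. The only point you leave implicit is that your Step~3 analysis assumes the small-$P$ regime $\log P\le\mu\log^3|D|$; the paper adds one sentence noting that the same precomputation bounds hold in the large-$P$ case via Section~\ref{subsection:LargeP}, which you should mention since otherwise your term $O(n\M(\log P))$ for computing the $d_i$ would not be $O(|D|^{1/2+\epsilon})$ when $\log P$ is comparable to $\log M$.
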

\begin{proof}
The complexity of Step 1 is addressed by Lemma~\ref{lemma:sieve} above.
By Proposition~\ref{prop:pcp}, Step 2 performs $h(D)$ operations in $\clD$, each taking $O(\log^2|D|)$ time \cite{Biehl:FormReductionComplexity}.  Even if we compute a different presentation for every $v\le \vmax$, the total time is $O(|D|^{1/2+\epsilon})$.
The table used by Algorithm~2.2 stores $h(D)=O(|D|^{1/2}\llog|D|)$ group elements, by bound (i), requiring $O(|D|^{1/2}\log|D|\llog|D|)$ space.  

As described in Section \ref{subsection:SmallP}, when $\log P\le \mu\log^3|D|$ the complexity of Algorithm~2.3 is
$O(\textsf{M}(\log M)\log^2 n)$ time and $O(\log M+n\log P)$ space, and we have
$$\log M=\sum_{p\in S}\log p\le n\log \pmax = O(|D|^{1/2}\log|D|\llog|D|),$$
according to bounds (iii) and (vi) above.
As discussed in Section \ref{subsection:LargeP}, the same time and space bounds for precomputation apply when $\log P > \mu\log^3|D|$.
\end{proof}

We next consider \textsc{TestCurveOrder} (Section \ref{subsection:TestingCurves}), which is used by Algorithm~1.1 to find a curve in $\Ellt(\Fp)$.  We assume \cite[Alg.~7.4]{Sutherland:Thesis} is used to implement the algorithm \textsc{FastOrder} which is called by \textsc{TestCurveOrder}.

\begin{lemma}\label{lemma:TestCurve}
\textsc{TestCurveOrder} runs in expected time $O(\log^2{p}\llog^2{p})$.
\end{lemma}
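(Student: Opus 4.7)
The plan is to bound separately the cost of a single execution of the main loop (Steps~3--5) and the expected number of times the loop executes, then multiply the two.

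For the per-iteration cost, the bulk of the work is the call to \textsc{FastOrder} in Step~4. Using the improved algorithm \cite[Alg.~7.4]{Sutherland:Thesis} specified for the proof of Theorem~1, this performs $O(\log N)$ group operations on a factored integer $N\le N_s\le 2p+1$, each costing $O(\M(\log p))$ bit operations. The remaining ancillary work per iteration---sampling a random affine point (one square root in $\Fp$), computing $m_sP$, forming $\mathcal{N}$ by intersecting two arithmetic progressions with $\mathcal{H}_p$, and testing $\mathcal{N}\subseteq\{N_0,N_1\}$---contributes at most $O(\log p\cdot\M(\log p))$ in total. Using $\M(n)=O(n\log n\,\llog n)$ and absorbing $\lllog p=O(\llog p)$ into the $\llog^2 p$ factor, this gives $O(\log p\cdot\M(\log p))=O(\log^2p\,\llog^2 p)$ per iteration.

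For the expected iteration count I would show that in expectation only $O(1)$ iterations occur before termination. A failure of \textsc{FastOrder} in Step~4 eliminates $N_s$ as a candidate for $\#E$, so at most two failures can occur before the algorithm returns \textbf{false}. For the successful branch, the key claim is that $\lcm(m_0,m_1)$ at least doubles with constant probability per iteration; once $\lcm(m_0,m_1)>2\sqrt p$, any residue class modulo $\lcm(m_0,m_1)$ meets $\mathcal{H}_p$ in at most two integers, and these must be exactly $N_0$ and $N_1$ by the congruences $\#E\equiv 0\pmod{m_0}$ and $\#E\equiv 2p+2\pmod{m_1}$ guaranteed by the invariants $m_0\mid\#E$ and $m_1\mid\#\tilde E$, so Step~5 succeeds. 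The probabilistic input is that for a random $P\in E_s(\Fp)\cong\Z/n_1\oplus\Z/n_2$ with $n_1\mid n_2$, the second-coordinate bound $|P|\ge n_2/\gcd(b,n_2)$ together with $n_2\ge\sqrt{\#E_s(\Fp)}\ge\sqrt{p/2}$ (Hasse) yields $\Pr[|P|>2\sqrt p]=\Omega(1)$, from which the geometric tail gives expected iteration count $O(1)$.

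The main obstacle is establishing this lower bound on $\Pr[|P|>2\sqrt p]$ uniformly over all possible elementary divisor structures of $E(\Fp)$: highly composite group orders make $|P|$ more likely to be a small divisor of $\exp(E(\Fp))$, and a naive argument here incurs a spurious $\llog p$ factor via $\phi(n)/n=\Omega(1/\llog n)$. I would avoid this either by a direct case analysis of the divisor sum $\sum_{d\mid n_2}\phi(d)/d$, or by combining bounds obtained from $E$ and $\tilde E$ so that only a constant-probability event on \emph{either} curve is required. With this in hand, multiplying the per-iteration cost $O(\log^2 p\,\llog^2 p)$ by the $O(1)$ expected iteration count yields the claimed running time.
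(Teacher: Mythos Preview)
Your per-iteration accounting is essentially right, modulo one slip: \cite[Alg.~7.4]{Sutherland:Thesis} uses $O(\log N\,\llog N/\lllog N)$ group operations, not $O(\log N)$. This does not change the final $O(\log^2 p\,\llog^2 p)$ bound, since the extra $\llog/\lllog$ factor is exactly absorbed by the $\lllog$ in $\M(\log p)$.

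The genuine gap is in the termination argument. Your central probabilistic claim, that a uniformly random $P\in E_s(\Fp)$ satisfies $|P|>2\sqrt p$ with probability $\Omega(1)$, is simply false. When $E_s(\Fp)\cong(\Z/n\Z)^2$ with $n\approx\sqrt p$ (and such curves do occur), every point has order at most $n<2\sqrt p$, so the probability is zero. Your own inequality $n_2\ge\sqrt{p/2}$ is far below the $2\sqrt p$ threshold, so nothing in the displayed chain of bounds gets you there. The subsequent assertion that the at-most-two elements of $\mathcal N$ ``must be exactly $N_0$ and $N_1$'' is also unsupported: you only know $\#E\in\mathcal N$, and the second element of $\mathcal N$ need not lie in $\{N_0,N_1\}$; for $|\mathcal N|\le 1$ you would need $\lcm(m_0,m_1)>4\sqrt p$, not $2\sqrt p$.

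The paper avoids this by invoking two external results instead of arguing from first principles. First, \cite[Thm.~8.1]{Sutherland:Thesis} shows that an expected $O(1)$ random points suffice to make $m_s$ equal to the group exponent $\lambda(E_s(\Fp))$, regardless of the group structure. Second, and crucially, the Mestre--Schoof theorem as sharpened in \cite{CremonaSutherland:MestreSchoof} guarantees that for $p>11$ the pair $\bigl(\lambda(E(\Fp)),\lambda(\tilde E(\Fp))\bigr)$ forces $\mathcal N\subseteq\{N_0,N_1\}$. This is precisely the ``combining bounds from $E$ and $\tilde E$'' route you mention, but it is a nontrivial theorem that must be cited; it does not follow from the elementary divisor bounds you sketch. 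With these two inputs the $O(1)$ expected-iteration bound, and hence the lemma, follows at once.
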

\begin{proof}
For $s=0,1$ the integer $m_s$ computed by \textsc{TestCurveOrder} is the lcm of the orders of random points in $E_s(\Fp)$.  By \cite[Thm.~8.1]{Sutherland:Thesis} we expect $O(1)$ points yield $m_s=\lambda(E_s(\Fp))$, the group exponent of $E_s(\Fp)$.
For $p>11$, Theorem~2 and Table~1 of \cite{CremonaSutherland:MestreSchoof} then imply $\mathcal{N}\subseteq\{N_0,N_1\}$, forcing termination.
We thus expect to execute each step $O(1)$ times.  We now bound the cost of Steps 2-5:
\renewcommand\theenumi{\arabic{enumi}}
\renewcommand\labelenumi{\theenumi.}
\begin{enumerate}
\item[2.]
The non-residue used to compute $\tilde{E}$ can be probabilistically obtained using an expected $O(\log p)$ operations in $\Fp$, via Euler's criterion.
\item[3.]
With $E_s$ in the form $y^2=f(x)$, we obtain a random point $(x,y)$ by computing the square-root of $f(x)$ for random $x\in\Fp$, using an expected $O(\log p)$ operations in $\Fp$ to compute square roots (probabilistically).
\item[4.]
Computing $Q=m_sP$ uses $O(\log p)$ group operations in $E_s(\Fp)$.  The factorization of $N_s/m_s$ is obtained by maintaining $m_s$ in factored form.  Implementing \textsc{FastOrder} via \cite[Alg.~7.4]{Sutherland:Thesis} uses
$O(\log p\llog p/\lllog p)$ group operations on $E_s(\Fp)$, by \cite[Prop.~7.3]{Sutherland:Thesis}.
\item[5.]
The intersection of two arithmetic sequences can computed with the extended Euclidean algorithm in time $O(\log^2 p)$, by \cite[Thm.~3.13]{Gathen:ComputerAlgebra}.
\end{enumerate}
Step~4 dominates.  The group operation in $E_s(\Fp)$ uses $O(1)$ operations in $\Fp$, each with bit complexity $O(\M(\log p))$, and this yields the bound of the lemma.
\end{proof}

We are now ready to bound the complexity of Algorithm~1 (Section \ref{section:Overview}), which computes $H_D\bmod p$ using Algorithm~1.1 (Section \ref{subsection:TestingCurves}), Algorithm~1.2 (Section \ref{subsection:FindEllD}), and Algorithm~1.3 (Section \ref{subsection:EnumEllD}).

\begin{lemma}[\textbf{GRH}]\label{lemma:alg1}
For $p\in S$, {\rm Algorithm} $1$ computes $H_D\bmod p$ with an expected running time of $O(|D|^{1/2}\log^5|D|\llog^3|D|)$, using $O(|D|^{1/2}\log|D|\llog|D|)$ space.
\end{lemma}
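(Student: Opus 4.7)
The plan is to bound each of the four substeps of Algorithm~1 separately and show that Steps~1.1 and~1.3 are each at the target $O(|D|^{1/2}\log^5|D|\llog^3|D|)$, while Steps~1.2 and~4 contribute only lower-order terms. Throughout I will use the uniform bound $\log p = O(\log|D|)$ from~(vi) together with the fast-multiplication estimate $\M(\log p)=O(\log|D|\llog|D|\lllog|D|)$, so that a single group operation on $E(\Fp)$ costs $O(\M(\log p))$ bit operations and an $O(\log p)$-length scalar multiplication costs $O(\log^2|D|\llog|D|\lllog|D|)$ bit operations.

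For Step~1.1, the expected number of random curves one must generate to hit an element of $\Ellt(\Fp)$ is at most $1/\rho(p,t)\le z = O(|D|^{1/2}\log^3|D|\llog|D|)$ by bound~(v). The quick filter that tests $(p+1)P=\pm tP$ is a scalar multiplication, costing the amount just quoted per candidate, and a curve passes the filter only when its true order lies in $\{N_0,N_1\}$, which happens with probability $O(\rho(p,t))$. Hence \textsc{TestCurveOrder} is invoked only $O(1)$ times in expectation, and each call is bounded by $O(\log^2|D|\llog^2|D|)$ through Lemma~\ref{lemma:TestCurve}. Multiplying $z$ by the per-curve filter cost gives the claimed bound for Step~1.1.

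For Step~1.2, under the GRH Lemma~\ref{lemma:pvmax} gives $v(p)=O(\log^3|D|\llog^4|D|)$, so the cutoff $L=\max(\log|D|,v)$ and the depth $\nu_\ell(w)$ of each $\ell$-volcano encountered are polylogarithmic in $|D|$. Each \textsc{FindLevel}, \textsc{Descend}, or \textsc{Ascend} step costs $O(\ell^2+\M(\ell)\log p)$ field operations by~(\ref{equation:isogenycost}), so the aggregate cost over all $\ell\mid w$ is polylogarithmic; the abort branch invokes Kohel's $O(p^{1/3})$ algorithm but fires with probability $o(1)$, and its amortized cost is negligible. For Step~1.3, Proposition~\ref{prop:enum} ensures exactly $h(D)-1=O(|D|^{1/2}\llog|D|)$ isogeny transitions are performed; the primes $\ell_i$ satisfy $\ell_i\le\lmax=O(\log^2|D|)$ by~(iv), so each transition costs $O(\lmax^2+\M(\lmax)\log p)$ field operations, which is $O(\log^5|D|\llog|D|\lllog|D|)$ bit operations. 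The volcano-walking overhead from Proposition~\ref{prop:WalkSurface} adds only an $O(\ell_i d)$ factor per surface step with $d$ polylogarithmic, which is absorbed. The total is $O(|D|^{1/2}\log^5|D|\llog^2|D|\lllog|D|)$, within the target. Step~4 builds the product of $h(D)$ linear factors via a subproduct tree in $O(\M(h(D)\log p)\log h(D))$ bit operations, easily inside the bound.

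For the space bound, the dominant contribution comes from storing the $h(D)$ enumerated $j$-invariants at $O(h(D)\log p)=O(|D|^{1/2}\log|D|\llog|D|)$ bits; the modular polynomials $\Phi_{\ell_i}\bmod p$ used transiently in Steps~1.2 and~1.3 occupy $O(\lmax^2\log p)$ bits, which is polylogarithmic, and the scratch state of \textsc{TestCurveOrder} is smaller still. The main obstacle is Step~1.1: making the per-curve filter cost tight enough that $z$ multiplied by that cost does not exceed the target requires using the Chebotarev-based estimate underlying Lemma~\ref{lemma:rbound} to ensure Algorithm~2.1 terminates with $z=O(|D|^{1/2}\log^3|D|\llog|D|)$, and carefully separating the cheap filter from the rare \textsc{TestCurveOrder} invocation so that the latter's quadratic-in-$\log p$ cost only appears an expected $O(1)$ times per prime.
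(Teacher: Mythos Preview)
Your decomposition matches the paper's, and the bounds for Steps~1.2 and~4 are fine, but two of your intermediate claims do not hold as stated.

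In Step~1.1, the assertion that ``a curve passes the filter only when its true order lies in $\{N_0,N_1\}$'' is false: the test $(p+1)P=\pm tP$ passes whenever the order of the single random point $P$ divides $N_0$ or $N_1$, which can happen on curves with $\#E\notin\{N_0,N_1\}$ whenever $\gcd(\#E,N_0N_1)$ is not tiny. So you have not justified that \textsc{TestCurveOrder} is invoked only $O(1)$ times. Fortunately the conclusion survives: the paper simply charges the full \textsc{TestCurveOrder} cost $O(\log^2p\,\llog^2p)$ from Lemma~\ref{lemma:TestCurve} to \emph{every} sampled curve, and $z\cdot O(\log^2|D|\llog^2|D|)=O(|D|^{1/2}\log^5|D|\llog^3|D|)$ already meets the target, so no separation of filter and full test is needed.

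In Step~1.3, the claim that the $O(\ell_i d)$ overhead from Proposition~\ref{prop:WalkSurface} is ``absorbed'' is not justified. With $\ell_i$ as large as $\lmax=O(\log^2|D|)$ and $d\ge 1$ possible whenever $\ell_i\mid v$, this multiplies your per-step cost by an extra $O(\log^2|D|)$, pushing Step~1.3 to $O(|D|^{1/2}\log^7|D|\cdots)$, beyond the lemma's bound. The paper avoids this by observing that for each $v(p)$ one may choose a polycyclic presentation using only norms $\ell_i\nmid v$; Bach's ERH bound still guarantees generators with $\ell_i=O(\log^2|D|)$ after excluding the $O(\llog|D|)$ primes dividing $v$, so every $\ell_i$-volcano has depth~$0$ and the overhead disappears entirely. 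You need this reduction (or an equivalent argument) to close the gap.
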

\begin{proof}
Ignoring the benefit of any torsion constraints, Algorithm~1.1 expects to sample $p/H(-v^2D)\le z$ random curves over $\Fp$ to find $j\in \Ellt(\Fp)$.  The cost of testing a curve is $O(\log^2{p}\llog^2{p})$, by Lemma~\ref{lemma:TestCurve}, and this bound dominates the cost of any filters applied prior to calling \textsc{TestCurveOrder}.

Applying bound (v) on $z$ and bound (vi) on $\pmax$ yields an overall bound of
\begin{equation}\label{equation:Alg11bound}
O(|D|^{1/2}\log^5|D|\llog^3|D|)
\end{equation}
on the expected running time of Algorithm~1.1, and it uses negligible space.

Algorithm~1.2 finds $j\in\EllD(\Fp)$ in polynomial time if the conductor of $D$ is small, and otherwise its complexity is bounded by the $O(p^{1/3})=O(|D|^{1/3+\epsilon})$ complexity of Kohel's algorithm (under GRH).  In either case it is negligible.

As shown in \cite{Bach:ERHBounds}, the ERH yields an $O(\log^2|D|)$ bound on the prime norms needed to generate $\clD$, even if we exclude norms dividing $v$ (at most $O(\llog|D|)$ primes).  It follows that every optimal polycyclic presentation used by Algorithm~1.2 has norms bounded by $\lmax=O(\log^2|D|)$.  To bound the running time of Algorithm~1.3 we assume $\ell_i\nmid v$, since we use $\ell_i|v$ only when it improves performance.

The time to precompute each $\Phi_{\ell_i}$ is $O(\ell_i^{3+\epsilon})=O(\log^{6+\epsilon}|D|)$, by \cite{Enge:ModularPolynomials}, and at most $O(\log|D|)$ are needed.  These costs are negligible relative to the desired bound, as is the cost of reducing each $\Phi_{\ell_i}$ modulo $p$.  Applying the bound on $\lmax$ and bound (vi) on $\pmax$, each step taken by Algorithm~1.3 on an $\ell_i$-isogeny cycle uses $O(\log^4|D|)$ operations in $\Fp$, by (\ref{equation:isogenycost}).  A total of $h$ steps are required, and the bounds (i) on $h$ and (vi) on $p$ yield a bit complexity of $O(|D|^{1/2}\log^5|D|\llog^{2+\epsilon}|D|)$ for Algorithm~1.3, using $O(h\lg p)=O(|D|^{1/2}\log|D|\llog|D|)$ space.

Step 4 of Algorithm~1 computes $\prod(X-j)$ over $j\in\EllD(\Fp)$ via a product tree, using $O(\M(h)\log h)$ operations in $\Fp$ and space for two levels of the tree.  Applying bound (i), this uses $O(|D|\log^{3+\epsilon}|D|)$ time and $O(|D|^{1/2}\log|D|\llog|D)$ space.
\end{proof}

The time bound in Lemma~\ref{lemma:TestCurve} may be improved to $O(|D|^{1/2}\log^5|D|\llog^2|D|)$ by arguing that a random point on a random elliptic curve over $\Fp$ has order greater than $4\sqrt{p}$ with probability $1-O(1/\log p)$.

\begin{thm1}[\textbf{GRH}]
{\rm Algorithm 2} computes $H_D \bmod P$ in $O(|D|\log^{5}|D|\llog^4|D|)$ expected time, using $O(|D|^{1/2}(\log|D|+\log P)\llog|D|)$ space.
\end{thm1}
\begin{proof}
Lemma~\ref{lemma:precompute} bounds the cost of Steps 1--3.  As previously noted, if we have $P> M=\prod_{p\in S}p$, we set $P=M$ and compute $H_D$ over $\Z$.

Algorithm~1 is called for each $p\in S$, of which there are $n=O(|D|^{1/2}\llog|D|)$, by bound (iii).  Applying Lemma~\ref{lemma:alg1}, Algorithm~1 computes $H_D\bmod p$ for all $p\in S$ within the time and space bounds stated in the theorem.

Recalling (\ref{equation:CRTupdate}) from Section \ref{subsection:SmallP}, for $\log P\le\mu \log^3|D|$ the total cost of updating the CRT sums via Algorithm~2.4 is bounded by
\begin{equation}\label{CRTupdate}
O\bigl(nh\M(\log P)+h\M(\log M+n\log n)\bigr).
\end{equation}
We have $\log M \le n\log \pmax = O(|D|^{1/2}\log|D|\llog|D|)$, by bounds (iii) and (vi), thus (\ref{CRTupdate}) is bounded by $O(|D|\log^{3+\epsilon}|D|)$, using bound (i) on $h$.  The cost of Algorithm~2.5 in Step 5 is $O(h\M(\log P))=O(|D|^{1/2+\epsilon})$, with $\log P=O(\log^3|D|)$.  The space required is $O(h(\log|D|+\log P))$, which matches the bound in the theorem.

For $\log P > \mu \log^3|D|$, we apply the hybrid approach of Section \ref{subsection:LargeP}, whose costs are bounded in (\ref{equation:CRThybrid}).  Using the bounds on $\log M$, $n$, and $h$, we again obtain an $O(|D|\log^{3+\epsilon}|D|)$ time for all CRT computations, and the space is as above.
\end{proof}

The CRT approach is particularly well suited to a distributed implementation; one simply partitions the primes in $S$ among the available processors.  The precomputation steps in Algorithm~2 have complexity $O(|D|^{1/2+\epsilon})$, under the GRH, and this is comparable to the complexity of Algorithm~1.  Parallelism can be applied here, but in practice we are happy to repeat the precomputation on each processor.

When $\log P$ is polynomially bounded in $\log|D|$, the postcomputation can be performed in time $O(|D|^{1/2+\epsilon})$ by aggregating the CRT sums, with the final result $H_D\bmod P$ available on a single node.   When $P$ is larger, as when computing $H_D$ over $\Z$, we may instead have each processor handle the postcomputation for a subset of the coefficients of $H_D$, leaving the final result distributed among the processors.

We do not attempt a detailed analysis of the parallel complexity here, but note the following corollary, which follows from the discussion above.

\begin{corollary}[\textbf{GRH}]\label{cor:parallel}
There is a parallel algorithm to compute $H_D \bmod P$ on $O(|D|^{1/2+\epsilon})$ processors that uses $O(|D|^{1/2+\epsilon})$ time and space per processor.
\end{corollary}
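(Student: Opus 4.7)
The plan is to exploit the embarrassing parallelism of the CRT approach. Recall that $n=\#S=O(|D|^{1/2+\epsilon})$, so I would assign each of the $n$ primes $p_i\in S$ to its own processor (or more generally, split $S$ evenly among $O(|D|^{1/2+\epsilon})$ processors with each handling $O(1)$ primes). The computation of $H_D\bmod p_i$ via Algorithm~1 is entirely independent across $i$, which is what makes this so attractive.

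First, I would have every processor locally replay the precomputation of Steps 1--3 of Algorithm~2 (selecting $S$, computing the polycyclic presentation(s) of $\clD$, and performing the CRT precomputation of Algorithm~2.3). By Lemma~\ref{lemma:precompute}, this costs $O(|D|^{1/2+\epsilon})$ time and fits within the $O(|D|^{1/2}(\log|D|+\log P)\llog|D|)$ space bound. Duplicating the work here is cheaper than orchestrating communication, and the primes and presentations are produced deterministically (or can be seeded identically), so every processor ends up with the same $S$. Next, each processor invokes Algorithm~1 on its assigned prime and then calls Algorithm~2.4 on the resulting coefficients, maintaining running CRT sums $(C_j,s_j)$ for each coefficient locally; by Lemma~\ref{lemma:alg1} this stage costs $O(|D|^{1/2}\log^{5}|D|\llog^{3}|D|)$ time per processor and respects the stated space bound.

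For the postcomputation I would distinguish the two regimes already handled in the sequential analysis. When $\log P$ is polynomially bounded in $\log|D|$, each processor holds partial sums $C_j$ (mod $P$) and $s_j$ of the same bit-length, so a standard reduction tree sums them across processors in $O(\log n)$ rounds, each round transmitting $O(h(D)(\log P + \log\pmax))=O(|D|^{1/2+\epsilon})$ bits per processor; Algorithm~2.5 is then run once on the root to output $H_D\bmod P$. When $P$ is very large---most notably when we take $P=M$ to recover $H_D\in\Z[X]$---the aggregated answer is too big to materialize on any single processor, so I would instead redistribute: assign each processor a $\Theta(1/n)$ share of the $h(D)$ coefficients, and have it receive, for those coefficients only, the $c_i\bmod p_j$ values (or partial product-tree data) from the other processors. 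Fast Chinese remaindering then runs locally on each processor's share, leaving $H_D$ distributed across the cluster, and the hybrid approach of Section~\ref{subsection:LargeP} handles intermediate $P$.

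The main obstacle is the large-$P$ postcomputation: verifying that the redistribution of coefficients can be scheduled so that, at every instant, no processor holds more than $O(|D|^{1/2+\epsilon})$ bits. The key observation is that each coefficient's contribution from $p_j$ is only $O(\log\pmax)$ bits, so a processor responsible for $O(h(D)/n)=O(\llog|D|)$ coefficients only ever needs $O(n\llog|D|\log\pmax)$ bits of incoming data plus $O((h(D)/n)\log M)=O(|D|^{1/2+\epsilon})$ bits for its output, both within budget. Once this bookkeeping is in place the corollary follows directly by combining the per-processor time and space estimates of Lemmas~\ref{lemma:precompute} and~\ref{lemma:alg1} with the $O(\log n)$ overhead of the aggregation tree, which is absorbed into the $\epsilon$.
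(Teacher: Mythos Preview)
Your proposal is correct and follows essentially the same approach as the paper: partition the primes in $S$ among the processors, replicate the $O(|D|^{1/2+\epsilon})$ precomputation locally, run Algorithm~1 independently on each processor, and handle the postcomputation by aggregating CRT sums when $\log P$ is polylogarithmic in $|D|$ or by assigning each processor a subset of the coefficients of $H_D$ when $P$ is large. The paper itself does not give a proof, stating only that the corollary ``follows from the discussion above'' and explicitly declining a detailed analysis; your write-up supplies exactly the bookkeeping the paper omits.
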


\subsection{A heuristic analysis}\label{subsection:heuristics}
To obtain complexity estimates that better predict the actual performance of Algorithms 1 and 2, we consider a na\"{i}ve probabilistic model.  We assume that each positive integer $m$ is prime with probability $1/\log m$, and that for each prime $\ell\nmid D$ we have $\inkron{D}{\ell}=1$ with probability 1/2.  For a prime $\ell$ with $\inkron{D}{\ell}=1$ we further assume that if $\alpha,\alpha^{-1}\in\cl(D)$ are distinct classes containing an ideal of norm $\ell$, then $\alpha$ corresponds to a random element of $\clD$ uniformly distributed among the elements of order greater than 2.  Most critically, we suppose that all these probabilities are independent.  This last assumption is obviously false, but when applied on a large scale this model yields empirically accurate predictions.

Compared to the GRH-based analysis, these assumptions do not change the space complexity, nor bounds (i)--(iii), but significantly improve bounds (iv)--(vii).

\smallskip 

\noindent
(\textbf{H}) Our heuristic model predicts the following:
\begin{enumerate}
\setcounter{enumi}{3}
\item
$\boldsymbol{\lmax} = O(\log^{1+\epsilon}|D|).$
\vspace{4pt}
\item 
$\boldsymbol{z} = O(|D|^{1/2}\log^{1/2+\epsilon}|D|).$
\vspace{4pt}
\item 
$\boldsymbol{\pmax} = O(|D|\log^{1+\epsilon}|D|).$
\vspace{4pt}
\item 
$\boldsymbol{\vmax} = O(\log^{1/2+\epsilon}|D|).$
\end{enumerate}

Applying these to the analysis of Section \ref{section:Complexity} yields an $O(|D|\log^{3+\epsilon}|D|)$ bound on the expected running time of Algorithm~2, matching the heuristic result in \cite{Belding:HilbertClassPolynomial}.

It is claimed in \cite[\S 5.4]{Belding:HilbertClassPolynomial} that applying the bounds (i) and (ii) to \cite[Thm.~1.1]{Enge:FloatingPoint} also yields a heuristic complexity of $O(|D|\log^{3+\epsilon}|D|)$ when using the floating-point method to compute $H_D$. This is incorrect, the implied bound is actually $O(|D|\log^{4+\epsilon}|D|)$ (as confirmed by the author of \cite{Enge:FloatingPoint}).
\smallskip

One may reasonably question how accurate our $O(|D|\log^{3+\epsilon}|D|)$ estimate is in practice, since it assumes the Fast Fourier Transform (FFT) is used for all multiplications.  The cost $\M(n)$ arises in three distinct contexts:
\begin{enumerate}
\item[(a)]
The cost of operations in $\Fp$ is bounded by $O(\M(\log p))$.
\item[(b)]
Finding a root of $\Phi_\ell(X,j_i)/(X-j_{i-1})$ uses $O(\M(\ell)\log p)$ $\Fp$-operations.
\item[(c)]
Computing $\prod(X-j)$ uses $O(\M(h)\log h)$ $\Fp$-operations.
\end{enumerate}
In case (a) we actually expect $\lg \pmax$ to be smaller than the word size of our CPU, so multiplications in $\Fp$ effectively have unit cost.  For (b), $\ell$ is typically in the range where either schoolbook or Karatsuba-based multiplication should be used.  It is only in case (c) that FFT-based algorithms may be profitably applied.

In order to better estimate the running time of Algorithm 1 (which effectively determines the running time of Algorithm 2) we break out the cost of each step, expressing all bounds in terms of $\Fp$-operations.

\begin{center}
\begin{tabular}{ll}
Step & Complexity ($\Fp$-operations)\\\hline
1. Find $j\in\Ellt(\Fp)$ & $O(|D|^{1/2}\log^{3/2+\epsilon}|D|)$\\
2. Find $j'\in\EllD(\Fp)$ & negligible\\
3. Enumerate $\EllD(\Fp)$ & $O(|D|^{1/2}\log^{1+\omega+\epsilon}|D|)$\\
4. Compute $\prod_{j\in\EllD(\Fp)} (X-j)$ & $O(|D|^{1/2}\log^{2+\epsilon}|D|)$\\\hline
\end{tabular}
\vspace{6pt}\\
\textsc{Table} 1. (\textbf{H}) Heuristic complexity of Algorithm 1.
\vspace{4pt}
\end{center}

The value of $\omega$ depends on our estimate for $\M(\ell)$.  One can find values of $D$ in the feasible range where $\lmax$ is over 300, see \cite{Jacobson:NumericalResults,Jacobson:SupplementaryNumericalResults}, and here it is reasonable to assume $\M(\ell)=\ell^\omega$ with $\omega=\lg 3 \approx 1.585$.  In the worst case, Step 3 dominates.

However, the critical parameter is $\ell_1$, the least cost $\ell_i$ used by Algorithm 1.3.  If $\ell_1\nmid D$ we expect it to be used in the overwhelming majority of the steps taken by Algorithm 1.3.  As with $\lmax$, it is possible to find feasible $D$ for which $\ell_1$ is fairly large (over 100), but such cases are extremely rare.  If we average over $D$ in some large interval, our heuristic model predicts $\ell_1=O(1)$ (in fact ${\rm\bf E}[\ell_1] < 4$).  We typically have $\M(\ell_1)=O(1)$ and use $\omega=0$.  In almost all cases, Step 4 dominates.

The relative cost of Step 4 is not significant for small $|D|$, due to the excellent constant factors in the algorithms available for polynomial multiplication, but its asymptotic behavior becomes evident as $|D|$ grows (see Tables 3 and 4).

\section{Computational Results}\label{section:Performance}

To assess the performance of the new algorithm in a practical application, we used it to construct pairing-friendly curves suitable for cryptographic use, a task that often requires large discriminants.  We constructed ordinary elliptic curves of prime order and embedding degree $k$ over a prime field $\Fq$ such that either
$$\text{$k=6$ and $170< \lg q < 192$,\qquad or\qquad $k=10$ and $220< \lg q < 256$.}$$
These parameters were chosen using the guidelines in \cite{Freeman:PairingTaxonomy}, and have particularly desirable performance and security characteristics.  For additional background on pairing-based cryptography we refer to \cite[Ch.~24]{Cohen:HECHECC}.

To obtain suitable discriminants we used algorithms in \cite{Karabina:MNTCurves} (for $k=6$) and \cite{Freeman:EmbeddingDegree10} (for $k=10$) that were optimized to search for $q$ within a specified range.
This produced a set $\DPF$ of nearly 2000 fundamental discriminants (1722 with $k=6$ and 254 with $k=10$), with $|D|$ ranging from about $10^7$ to just over $10^{13}$ (almost all greater than $10^{10}$).  We selected 200 representative discriminants from $\DPF$ for our tests, including those that potentially posed the greatest difficulty, due to an unusually large value of $\ell_1$ or $h(D)$.

To each selected discriminant we applied the CM method, using Algorithm 2 to compute $H_D\bmod P$ (with $P=q$).  After finding a root $j$ of $H_D(X)$ over $\Fq$, we construct an elliptic curve $E$ with this $j$-invariant and ensure that the trace of $E$ has the correct sign.\footnote{One may apply the method of \cite{Rubin:CMTwist}, or simply compute $NQ$ for a nonzero point $Q\in E(\Fq)$, where $N$ is the desired (prime) order of $E(\Fq)$, and switch to a quadratic twist of $E$ if $NQ\ne 0$.}

\subsection{Implementation}
The algorithms described in this paper were implemented using the GNU C/C++ compiler \cite{GNU} and the GMP library \cite{GMP} on a 64-bit Linux platform.  Multiplication of large polynomials was handled by the zn\_poly library developed by Harvey \cite{Harvey:zn_poly}, based on the algorithm in \cite{Harvey:KroneckerSubstitution}.

The hardware platform included sixteen 2.8 GHz AMD Athlon processors, each with two cores.  Up to 32 cores were used in each test (with essentially linear speedup), but for consistency we report total cpu times, not elapsed times.  Memory utilization figures are per core, and can be achieved using a single core.

\subsection{Distribution of test discriminants}
To construct a curve of odd order over a field of odd characteristic we must have $D\equiv 5\bmod 8$, and this necessarily applies to $D\in\DPF$.  
We then have $\inkron{D}{2}=-1$, which implies $\ell_1\ge 3$, and also tends to make $h(D)$ smaller than it would be for an arbitrary discriminant.  Averaging over all discriminants up to an asymptotically large bound, we expect
$$L(1,\chi_D)=\frac{\pi h(D)}{\sqrt{|D|}}\quad\longrightarrow\quad C\pi^2/6 \approx 1.45,$$
where $C=\prod_p\bigl(1-1/(p^2(p+1))\bigr)$, see \cite[p.~296]{Cohen:CANT} (and see \cite{Jacobson:NumericalResults} for actual data).
Among the 1722 discriminants we found for $k=6$, the average value of $L(1,\chi_D)$ is about $0.55$, close to the typical value for $D\equiv 5\bmod 8$.  For $k=10$ we have the further constraint $\ell_1\ge 7$, and the average value of $L(1,\chi_D)$ is approximately $0.40$.

While we regard the discriminants in $\DPF$ as representative for the application considered, in order to assess the performance of Algorithm 2 in more extreme cases we also conducted tests using discriminants with very large values of $L(1,\chi_D)$.  These results are presented in Section \ref{subsection:bigh}.

\begin{table}
\begin{center}
\begin{tabular}{@{}lrrr@{}}
&Example 1& Example 2& Example 3\\\midrule
$|D|$&$13,569,850,003$&$11,039,933,587$&$12,901,800,539$\\
$h(D)$&20,203&11,280&54,706\\
$L(1,\chi_D)$&0.54&0.34&1.51\\
$b$&2,272,566&1,359,136&5,469,778\\
$n$&63,682&39,640&142,874\\
$z$&755,637&734,040&905,892\\
$\lceil\lg\pmax\rceil$&40&38&43\\
$\vmax$&12&8&32\\
$(\ell_1^{r_1},\ldots,\ell_k^{r_k})$&$(7^{20203})$&$(17^{1128},19^{10})$&$(3^{27038},5^2)$\\
\midrule
Step 1&0.0s&0.0s&0.0s\\
Step 2&1.2s&0.5s&4.0s\\
Step 3&0.6s&0.3s&2.0s\\
Step 4&23,300s&26,000s&61,000s\\
Step 5&0.0s&0.0s&0.0s\\
$(T_{\rm f},T_{\rm e},T_{\rm b})$&(57,32,11)&(51,47,2)&(53,20,27)\\
\midrule
throughput&2.0Mb/s&0.6Mb/s&4.9Mb/s\\
memory&3.9MB&2.1MB&9.4MB\\
total data&5.7GB&1.9GB&37GB\\
\midrule
Solve $H_D(X)=0$ over $\Fq$&127s&86s&332s\\
\bottomrule
\end{tabular}
\\
\vspace{12pt}
\textsc{Table} 2. Example computations.\\
\vspace{2pt}
\footnotesize
(2.8 GHz AMD Athlon)
\normalsize
\end{center}
\end{table}

\subsection{Examples}
Table 2 summarizes computations for three discriminants of comparable size, with $|D|\approx 10^{10}$.  These represent a typical case (Example 1) and two ``worst" cases (Examples 2 and 3).  The parameters appearing in the top section of the table are as defined in Section \ref{section:Complexity}.  The next section of the table contains timings for each step of Algorithm 2.

As predicted by the asymptotic analysis, essentially all of the time is spent in Step 4, which calls Algorithm~1 for each prime $p\in S$.
There are three principal components in the running time of Algorithm 1:

\begin{center}
\begin{tabular}{ll}
$T_{\rm f}$: &time spent in Step 1 finding a curve in $\Ellt(\Fp)$;\\
$T_{\rm e}$: &time spent in Step 3 enumerating $\EllD(\Fp)$;\\
$T_{\rm b}$:&time spent in Step 4 building $H_D(X)=\prod_{j\in\EllD(\Fp)}(X-j)\bmod p$.
\end{tabular}
\end{center}

These are listed in Table 2 as percentages of the total time $T$.  The time spent elsewhere ($T-T_{\rm f}-T_{\rm e}-T_{\rm b})$ is well under 1\% of $T$.

The third section in Table 2 lists the throughput, memory utilization, and total data processed during the computation.\footnote{The suffixes Mb, MB, and GB indicate $10^6$ bits, $10^6$ bytes and $10^9$ bytes, respectively.}  The total data is defined as the product of the number of coefficients $h(D)$ and the height bound $b$.  This approximates the total size of $H_D$, typically overestimating it by about 10\% (the actual sizes of $H_D$ for the three examples are 5.3GB, 1.8GB and 34GB respectively).  The throughput is then the total data divided by the total time.  Memory figures include all working storage and overhead due to data alignment (to word boundaries and to powers of 2 in FFT computations), but exclude fixed operating system overhead of about 4MB.  
The last row of the table lists the time to find a root of $H_D$ over $\Fq$, although this task is not actually performed by Algorithm 2.
\medskip

Example 1 represents a typical case: $L(1,\chi_D)$ is close to the mean of $0.55$, and $\ell_1=7$ is just above the median of $5$ (over $D\in\DPF$). Example 2 has an unusually large $\ell_1=17$ (exceeded by fewer than 1\% of $D\in\DPF$), while  Example 3 has an unusually large $L(1,\chi_D)\approx 1.51$ (exceeded by fewer than 1\% of $D\in\DPF$).

In Example 2, the large $\ell_1$ increases $T_{\rm e}$ substantially, despite the smaller $h(D)$.  The smaller $L(1,\chi_D)$ tends to increase the running time of individual calls to Algorithm 1.1, but at the same time $n$ decreases so that overall $T_{\rm f}$ decreases slightly.  The smaller values of $h(D)$ and $n$ both serve to decrease $T_{\rm b}$ significantly.

In Example 3 the large $L(1,\chi_D)$ decreases the cost of individual calls to Algorithm 1.1, but increases $n$ substantially so that overall $T_{\rm f}$ increases.  However, $T_{\rm e}$ and $T_{\rm b}$ increase even more, especially $T_{\rm b}$.  Despite the longer running time, this scenario results in the highest throughput of the three examples.

\subsection{Scaling}
Table 3 summarizes the performance of Algorithm 2 for $D\in \DPF$ ranging over six orders of magnitude.  We selected examples whose performance was near the median value for discriminants of comparable size.  We note the quasi-linear growth of $T$, and the increasing value $T_{\rm b}$ as a percentage of $T$, consistent with our heuristic prediction that this component is asymptotically dominant.
 
\begin{table}
\begin{center}
\begin{tabular}{@{}rrrrrrr@{}}
$|D|$ &$h(D)$&cpu secs&$(T_{\rm f},T_{\rm e},T_{\rm b})$&Mb/s&memory&data\\
\midrule
$116,799,691$&$2,112$&156&(65,28,7)&2.6&0.5MB&52MB\\
$1,218,951,379$&$6,320$&1,650&(64,26,8)&2.5&1.1MB&520MB\\
$13,569,850,003$&$20,203$&23,400&(57,33,10)&2.0&3.9MB&5.7GB\\
$126,930,891,691$&$56,282$&195,000&(66,22,12)&2.0&9.5MB&50GB\\
$1,009,088,517,019$&$181,584$&2,160,000&(64,20,16)&2.0&34MB&535GB\\
$10,028,144,961,139$&$521,304$&20,600,000&(63,20,17)&1.9&84MB&5.0TB\\
\bottomrule
\end{tabular}
\\
\vspace{8pt}
\textsc{Table} 3.  Performance for typical $D\in \DPF$.\\
\vspace{2pt}
\footnotesize
(2.8 GHz AMD Athlon)
\normalsize
\end{center}
\end{table}
Up to 32 cores were applied to the computations in Table 3.  In all but the smallest example we can effectively achieve a 32x speedup.  The actual elapsed time for the largest discriminant was about 8 days, while the second largest took less than a day.   
As suggested by Corollary \ref{cor:parallel}, these computations could be usefully distributed across many more processors.  The low memory requirements provide headroom for much larger computations: each of our cores had 2GB of memory, but less than 100MB was used.

Below is an example of a curve constructed using $D=-10,028,144,961,139$, the largest discriminant listed in Table 3.
The elliptic curve
$$y^2=x^3-3x+3338561401570133202017008597803337396411439360229378547$$
has embedding degree 6 over the finite field $\Fq$ with
$$q=30518311673028635209000068713843412774183984182022701057.$$
This curve has prime order $N=q+1-t$, where
$$t=5524338120809463560527395583.$$
There are a total of $h(D)=521,304$ nonisomorphic curves with the same order that may be constructed using $H_D \bmod q$.
A complete list of curves for all the discriminants tested is available at \url{http://math.mit.edu/~drew}.

\subsection{Discriminants with large $\boldsymbol{L(1,\chi_D)}$}\label{subsection:bigh}

Table 4 shows the performance of Algorithm~2 on discriminants specifically chosen to make $L(1,\chi_D)$ extremely large, between 6.8 and 7.8.  These discriminants are not in $\DPF$, and are likely the smallest possible for the class numbers listed (but we do not guarantee this).  In each case we computed $H_D$ modulo a 256-bit prime $P$.  The timings would not change significantly for larger $P$, but the space would increase.

The first discriminant $D=-2,093,236,031$ in Table~4 also appears in Table~1 of \cite{Enge:FloatingPoint}.  Scaled to the same processor speed, Algorithm 2 computes $H_D\bmod P$ using less than half the cpu time spent by the floating-point approximation method to compute a class polynomial over $\Z$ for the same $D$ (this polynomial would then need to be reduced mod $P$ in order to apply the CM method).
Most significantly, the memory required is about 20 MB versus 5 GB.

This comparison is remarkable, given that the height bound $b=7,338,789$ for $H_D$ is nearly 28 times larger than the 264,727 bits of precision used in \cite{Enge:FloatingPoint}, where the class polynomial for the double-eta quotient $\mathfrak{w}_{3,13}$ was computed instead of the Hilbert class polynomial.  The difference in throughput is thus much greater than the difference in running times: 7.5 Mb/s versus 0.10 Mb/s.
 
\begin{table}
\begin{center}
\begin{tabular}{@{}rrrrrrr@{}}
$|D|$ &$h(D)$&cpu secs&$(T_{\rm f},T_{\rm e},T_{\rm b})$&Mb/s&memory&data\\
\midrule
$2,093,236,031$&100,000&98,800&(25,19,56)&7.5&18MB&93GB\\
$8,364,609,959$&200,000&472,000&(24,17,59)&6.8&36MB&400GB\\
$17,131,564,271$&300,000&1,240,000&(20,15,65)&5.9&61MB&920GB\\
$30,541,342,079$&400,000&2,090,000&(21,16,63)&6.4&71MB&1.7TB\\
$42,905,564,831$&500,000&3,050,000&(22,17,61)&6.9&81MB&2.6TB\\
$67,034,296,559$&600,000&5,630,000&(18,14,68)&5.6&121MB&3.9TB\\
$82,961,887,511$&700,000&7,180,000&(19,14,67)&5.9&132MB&5.3TB\\
$113,625,590,399$&800,000&9,520,000&(19,15,66)&5.9&142MB&7.1TB\\
$133,465,791,359$&900,000&11,500,000&(20,15,65)&6.2&152MB&9.0TB\\
$170,868,609,071$&1,000,000&14,200,000&(20,16,64)&6.3&163MB&11.2TB\\
\bottomrule
\end{tabular}
\\
\vspace{4pt}
\textsc{Table} 4.  Performance when $L(1,\chi_D)$ is large.\\
\vspace{2pt}
\scriptsize
(2.8 GHz AMD Athlon)
\normalsize
\end{center}
\end{table}

\section{Acknowledgments}
The author thanks Daniel J. Bernstein for initially suggesting this project, and also Gaetan Bisson, Reinier Br\"{o}ker, Andreas Enge, David Harvey, Tanja Lange, and Kristin Lauter for their support and feedback on an early draft of this paper.  I am also grateful to the referee for careful reading and many helpful suggestions.

\section*{Appendix 1}
This appendix proves Lemma~\ref{lemma:Bbound}, which bounds the coefficients of the Hilbert class polynomial $H_D(X)$, and Lemma~\ref{lemma:vbound} which bounds the Hurwitz class number $H(-v^2D)$ in terms of $v$ and $H(-D)$.

Let $B$ denote an upper bound on the absolute values of the coefficients of $H_D(X)$.  In the literature one finds many values for $B$ (or $\log B$), but due to an unfortunate series of typographical errors, most are either incorrect \cite[p.~285]{Belding:HilbertClassPolynomial}, exponentially larger than necessary (\cite[Eq.~22]{Atkin:ECPP} and \cite[p.~151]{Blake:EllipticCurves}), or heuristics that do hold for all $D$ (\cite[Eq.~3.1]{Agashe:CRTClassPolynomial} and \cite[p.~2431]{Broker:pAdicClassPolynomial}).  In \cite[Thm.~1.2]{Enge:FloatingPoint}, Enge gives a rigorous and fully explicit value for $B$ that is empirically accurate to within a constant factor, but still larger than desirable for practical application.  Provided one is prepared to enumerate the elements of $\cl(D)$, a much tighter bound is given by the lemma below, whose proof is derived directly from Enge's analysis in \cite[\S 4]{Enge:FloatingPoint}.

\begin{lemma}\label{lemma:Bbound}
For a quadratic discriminant $D< 0$, let $(a_1,b_1,c_1),\ldots,(a_h,b_h,c_h)$ be the sequence of reduced, primitive binary quadratic forms of discriminant $D$ with $0<a_1\le \cdots\le a_h$, where $h=h(D)$.  Let $M_k=\exp(\pi\sqrt{|D|}/a_k)+C$, where $C=2114.567$.  Then the coefficients of $H_D(X)$ have absolute values bounded by
$$
B = \binom{h}{m} M_h^{-m} \prod_{k=1}^h M_k,
$$
where $m = \left\lfloor\frac{h+1}{M_h+1}\right\rfloor$.  We also have $\log B = O(|D|^{1/2}\log^2|D|)$, and under the GRH, $\log B = O(|D|^{1/2}\log|D|\llog|D|)$.
\end{lemma}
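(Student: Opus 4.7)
The plan is to follow the structure of Enge's analysis in \cite[\S 4]{Enge:FloatingPoint}, refining the bookkeeping of the elementary symmetric polynomials so as to extract the factor $\binom{h}{m}M_h^{-m}$ that sharpens the bound. Writing
$$H_D(X) \;=\; \prod_{k=1}^{h}\bigl(X - j(\tau_k)\bigr), \qquad \tau_k = \frac{-b_k + \sqrt{D}}{2a_k},$$
we have $|q_k| = e^{-\pi\sqrt{|D|}/a_k}$ for $q_k = e^{2\pi i\tau_k}$. Combining the $q$-expansion $j(\tau) = q^{-1} + 744 + \sum_{n\ge 1} c_n q^n$ with the classical Petersson--Rademacher growth estimate on the Fourier coefficients $c_n$, one obtains
$$|j(\tau_k)| \;\le\; e^{\pi\sqrt{|D|}/a_k} + 744 + \sum_{n\ge 1} |c_n|\, e^{-n\pi\sqrt{|D|}/a_k} \;\le\; e^{\pi\sqrt{|D|}/a_k} + C \;=\; M_k,$$
with the absolute constant $C = 2114.567$ chosen to absorb both the $744$ and the series tail uniformly over all admissible $a_k \le \sqrt{|D|/3}$.

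The coefficient of $X^{h-k}$ in $H_D(X)$ is (up to sign) the elementary symmetric function $e_k(j(\tau_1),\ldots,j(\tau_h))$, bounded in absolute value by $e_k(M_1,\ldots,M_h)$. I rewrite this as
$$e_k(M_1,\ldots,M_h) \;=\; \Bigl(\prod_{i=1}^{h} M_i\Bigr)\, e_{h-k}\bigl(M_1^{-1},\ldots,M_h^{-1}\bigr),$$
and use $M_1 \ge \cdots \ge M_h$ (which follows from $a_1 \le \cdots \le a_h$) to get
$$e_{h-k}(M_1^{-1},\ldots,M_h^{-1}) \;\le\; \binom{h}{h-k} M_h^{-(h-k)}.$$
Substituting $m = h-k$ bounds each coefficient by $\binom{h}{m} M_h^{-m} \prod_i M_i$. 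The ratio of consecutive terms $\binom{h}{m+1}M_h^{-m-1}/(\binom{h}{m}M_h^{-m}) = (h-m)/((m+1)M_h)$ decreases with $m$ and crosses $1$ at $m = (h+1)/(M_h+1) - 1$, so the worst case is achieved at $m = \lfloor (h+1)/(M_h+1)\rfloor$, giving the stated $B$.

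For the asymptotic estimates I take logarithms: since $\log M_k \le \pi\sqrt{|D|}/a_k + O(1)$ and $\log\binom{h}{m} = O(h)$,
$$\log B \;\le\; O(h) + \pi\sqrt{|D|}\sum_{k=1}^{h} \frac{1}{a_k}.$$
The sum $\sum_k 1/a_k$ admits the unconditional bound $O(\log^2|D|)$ via $\sum_{a \le \sqrt{|D|/3}} \tau(a)/a$, since each $a$ supports at most $O(\tau(a))$ reduced forms of discriminant $D$; combined with $h = O(|D|^{1/2}\log|D|)$ this yields $\log B = O(|D|^{1/2}\log^2|D|)$. Under the GRH, Littlewood's bound $h = O(|D|^{1/2}\llog|D|)$ together with the corresponding improvement on $\sum_k 1/a_k$ (controlled via $L(1,\chi_D) = O(\llog|D|)$) sharpens this to $\log B = O(|D|^{1/2}\log|D|\,\llog|D|)$.

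The only delicate step is the explicit constant $C$: one must verify, via a growth bound such as $|c_n| \le e^{4\pi\sqrt{n}}$, that $\sum_{n\ge 1}|c_n|\,e^{-n\pi\sqrt{|D|}/a_k} \le C - 744$ uniformly over the worst-case range of $a_k$. This is a careful but routine numerical verification, already done in \cite[\S 4]{Enge:FloatingPoint}; the subsequent combinatorial optimization of $m$ and the analytic estimates on $\sum 1/a_k$ are standard.
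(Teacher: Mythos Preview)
Your proof is correct and follows essentially the same approach as the paper: bound $|j(\tau_k)|\le M_k$ via Enge's constant $C$, bound the elementary symmetric functions, and optimize over the coefficient index. The only cosmetic difference is that the paper bounds the coefficient of $X^n$ by $B_n=\binom{h}{n}\prod_{k=1}^{h-n}M_k$ (taking the $h-n$ largest $M_k$) and then shows $B_n\le B$ via a short telescoping chain, whereas you use the reciprocal identity $e_k(M_i)=(\prod M_i)\,e_{h-k}(M_i^{-1})$ to obtain the slightly weaker per-coefficient bound $\binom{h}{m}M_h^{-m}\prod_i M_i$ directly and then maximize over $m$; both routes land on the same $B$, and your optimization step is marginally cleaner. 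The asymptotic estimates and the references for $\sum_k 1/a_k$ match the paper's.
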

\begin{proof}
We may write $H_D$ as
$$
H_D(X) = \prod_{k=0}^h\bigl(X-j(\tau_k)\bigr),
$$
where $\tau_k=(-b_k+\sqrt{D})/2a_k$.  With $q_k=e^{2\pi i \tau_k}$, we have $M_k=|1/q_k|+C$, where the constant $C$ bounds $|j(\tau_k)-1/q_k|$, as shown in \cite[p.~1094]{Enge:FloatingPoint}.  Thus $|j(\tau_k)|\le M_k$, and the absolute value of the coefficient of $X^n$ in $H_D(X)$ is bounded by
\begin{equation}\label{equation:Xnbound}
B_n = \binom{h}{n}\prod_{k=1}^{h-n}M_k.
\end{equation}
We now argue that $B_n\le B$.  For $n > m$ we have $n > (h+1)/(M_h+1)$ and
$$
\binom{h}{n}\Bigm/\binom{h}{n-1} = \frac{h-n+1}{n} < M_h.
$$
This implies $B_n < B_{n-1}$.  For $0< n \le m$ we have $\binom{h}{n}/\binom{h}{n-1} \ge M_h$, which implies
$$
B_0 \le B_1M_h/M_h \le B_2M_{h-1}M_h/M_h^2\le \cdots \le B_mM_{h-m+1}\cdots M_h/M_h^m = B.
$$
It follows that $B$ bounds every $B_n$.

The bound $\log B = O(|D|^{1/2}\log^2|D|)$ follows from $h=O(|D|^{1/2}\log|D|)$, as proven in \cite{Schur:ClassNumberBound}, and the bound $\sum_k\frac{1}{a_k}=O(\log^2|D|)$, proven in \cite[Lemma~2.2]{Schoof:Exponents}.  As shown in \cite[Lemma~2]{Belding:HilbertClassPolynomial}, under the GRH the bound $\sum_k\frac{1}{a_k}=O(\log|D|\llog|D|)$ follows from \cite{Littlewood:ClassNumber}, which yields $\log B = O(|D|^{1/2}\log|D|\llog|D|)$.
\end{proof}

In practice the bound given by Lemma~\ref{lemma:Bbound} is close to, and often better than, the heuristic bound
$B = \binom{h}{\lfloor{h/2\rfloor}}\exp(\pi\sqrt{|D|}\sum_k\frac{1}{a_k})$
that is sometimes used, even though the latter bound is not actually valid for all $D$ (such as $D=-99$).

\begin{lemma}\label{lemma:vbound}
Let $D$ be a negative discriminant, let $v\ge 2$ be an integer, and let $x$ be the largest prime for which $\prod_{p\le x}p\le v$, where $p$ ranges over primes.  Let $H(n)$ denote the Hurwitz class number.  The following bounds hold:
$$1\sle \frac{H(-v^2D)}{vH(-D)}\sle\prod_{p\le x}\frac{p+1}{p-1} \slt 11\llog^2(v+4).$$
\end{lemma}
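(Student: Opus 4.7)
I will write $D = u^2 D_K$ with $D_K$ the fundamental discriminant of $\Q(\sqrt{D})$, so that $-v^2 D = -(uv)^2 D_K$. The strategy is to apply the Hurwitz decomposition
\[
H(m^2|D_K|) \;=\; 2\sum_{d\mid m}\tilde h(-d^2 D_K), \qquad \tilde h(-N) := h(-N)/w(-N),
\]
to $m = u$ and $m = uv$ and to exploit the resulting multiplicative structure. A short case analysis will justify the restriction of the sum to $d\mid m$: when $D_K = 4 D_0$ with $D_0 \equiv 2,3\pmod 4$, tracking the $2$-adic valuation shows that any candidate $g$ with $g^2\mid m^2|D_K|$ but $g\nmid m$ yields $-m^2|D_K|/g^2 \equiv 2$ or $3\pmod 4$, which is not a valid discriminant. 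Combined with the classical formula $\tilde h(-f^2 D_K) = f\,\tilde h(-D_K)\prod_{p\mid f}(1-\chi(p)/p)$, with $\chi(p) := \inkron{D_K}{p}$, this gives $H(-D) = 2\tilde h(-D_K)\,G(u)$ and $H(-v^2D) = 2\tilde h(-D_K)\,G(uv)$, where $G(m) := \sum_{d\mid m}d\prod_{p\mid d}(1-\chi(p)/p)$; the lemma therefore reduces to
\[
v \;\le\; G(uv)/G(u) \;\le\; v\prod_{p\mid v}(p+1)/(p-1).
\]

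The function $G$ is multiplicative with $G(p^k) = A_k$, where $A_0 = 1$ and $A_{k+1} = pA_k + (1-\chi(p))$, so the closed form is $A_{k+b} = p^b A_k + (1-\chi(p))(p^b-1)/(p-1)$. Since $1-\chi(p) \in \{0,1,2\}$, the bound $A_{k+b} \ge p^b A_k$ is immediate. For the matching upper bound, using $A_k \ge 1$ (which holds inductively from $A_0 = 1$ and the recursion),
\[
A_{k+b}/(p^b A_k) \;\le\; 1 + \tfrac{2}{(p-1)A_k} \;\le\; (p+1)/(p-1).
\]
Taking the product over primes of $uv$ gives both displayed inequalities, and in particular establishes the lower bound $H(-v^2D) \ge vH(-D)$ of the lemma.

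To finish, I will pass from $\prod_{p\mid v}$ to $\prod_{p \le x}$ by a rearrangement argument: if $p_1 < \cdots < p_k$ are the distinct prime divisors of $v$ and $q_1 < \cdots < q_k$ the first $k$ primes, then $q_i \le p_i$, so $\prod q_i \le v$, forcing $q_k \le x$ by the definition of $x$; monotonicity of $p \mapsto (p+1)/(p-1)$ then yields $\prod_{p\mid v}(p+1)/(p-1) \le \prod_{p\le x}(p+1)/(p-1)$. The final product equals $\prod_{p\le x}(1-1/p^2)/(1-1/p)^2$ and is $O(\log^2 x)$ by Mertens's theorem, with asymptotic constant $6e^{2\gamma}/\pi^2 \approx 1.93$; combined with the Chebyshev bound $\theta(x) \le \log v$ (whence $\log x \le \llog(v+4) + O(1)$), the constant $11$ in the stated bound is a comfortable overestimate that can be verified by hand for the finitely many small $v$ where the asymptotic estimate is weakest. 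The main obstacle is the Hurwitz-decomposition case analysis (the factor-of-$4$ issue when $D_K = 4D_0$); all other steps are direct computations.
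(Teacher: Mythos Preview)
Your proposal is correct and follows essentially the same route as the paper: write $D=u^2D_K$, expand $H(-m^2D_K)$ via the class-number formula as a multiplicative function of $m$, and compare the values at $m=u$ and $m=uv$ prime-by-prime to obtain $v\le G(uv)/G(u)\le v\prod_{p\mid v}(p+1)/(p-1)$. The only presentational differences are that the paper cites Cohen for the Hurwitz decomposition (rather than doing your $4D_0$ case analysis) and carries out the final $\prod_{p\le x}(p+1)/(p-1)<11\llog^2(v+4)$ bound with explicit Rosser--Schoenfeld estimates rather than an asymptotic-plus-finite-check argument; your recursion $A_{k+1}=pA_k+(1-\chi(p))$ and the paper's closed formula $1+(p^k-1)(p-\chi_p)/(p-1)$ are trivially equivalent.
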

\begin{proof}
Let $u$ be the conductor of $D$, so that $D=u^2D_0$.  Then
\begin{equation}\label{equation:vb1}
H(-v^2D)=H(-(uv)^2D_0)=\sum_{d|uv}\frac{2h(d^2D_0)}{w(d^2D_0)},
\end{equation}
where $w(d^2D_0)=|\O_{d^2D_0}^*|$ is 2, 4, or 6 \cite[Lemma 5.3.7]{Cohen:CANT}.  We also have \cite[p.~233]{Cohen:CANT}
$$\frac{h(d^2D_0)}{w(d^2D_0)}=\frac{h(D_0)}{w(D_0)}d\prod_{p|d}\left(1-\frac{\chi_p}{p}\right),$$
where $\chi_p=\left(\frac{D_0}{p}\right)$ is $-1$, 0, or 1.  Regarding $D_0$ as fixed, we note that
$$H(-n^2D_0)=\frac{2h(D_0)}{w(D_0)}\sum_{d|n}d\prod_{p|d}\left(1-\frac{\chi_p}{p}\right)$$
is a multiplicative function of $n$, which yields
\begin{equation*}
H(-n^2D_0)=\frac{2h(D_0)}{w(D_0)}\prod_p\left(1+\left(p^{\nu_p(n)}-1\right)(p-\chi_p)/(p-1)\right).
\end{equation*}
where $\nu_p(n)$ is the $p$-adic valuation.  From (\ref{equation:vb1}) we obtain
\begin{equation}\label{equation:vb2}
\frac{H(-v^2D)}{vH(-D)}=\frac{\prod_p\left(1+\left(p^{\nu_p(u)+\nu_p(v)}-1\right)(p-\chi_p)/(p-1)\right)}
{v\prod_p\left(1+\left(p^{\nu_p(u)}-1\right)(p-\chi_p)/(p-1)\right)}.
\end{equation}
Fixing $D=u^2D_0$, we regard (\ref{equation:vb2}) as a multiplicative function of $v$.  For $v=p^k$:
\begin{equation*}
\frac{H(-p^2kD)}{p^kH(-D)}=\frac{\left(1+\left(p^{\nu_p(u)+k}-1\right)(p-\chi_p)/(p-1)\right)}
{p^k\left(1+\left(p^{\nu_p(u)}-1\right)(p-\chi_p)/(p-1)\right)}.
\end{equation*}
This value is minimized when $\chi_p=1$, in which case it is 1, yielding the first inequality in the lemma.
It is maximized when $\chi_p=-1$, in which case one finds
$$\frac{\left(1+\left(p^{\nu_p(u)+k}-1\right)(p+1)/(p-1)\right)}
{p^k\left(1+\left(p^{\nu_p(u)}-1\right)(p+1)/(p-1)\right)}\le \frac{p+1}{p-1},$$
for all nonnegative integers $k$ and $\nu_p(u)$.  We thus obtain from (\ref{equation:vb2})
\begin{equation*}
\frac{H(-v^2D)}{vH(-D)}\le\prod_{p|v}\frac{p+1}{p-1}\le\prod_{p\le x}\frac{p+1}{p-1},
\end{equation*}
proving the second inequality in the lemma.  To prove the third inequality, we first note
that for $v\ge\prod_{p\le x} p$ the inequality holds for each prime $x < 41$, by a machine calculation, so we assume $x\ge 41$.
We then have
$$
\log\prod_{p\le x}\frac{p+1}{p-1}=\sum_{p\le x}\log\left(1+\frac{2}{p-1}\right)\le\sum_{p\le x}\frac{2}{p-1}=
2\sum_{p\le x}\frac{1}{p} + 2\sum_{p\le x}\frac{1}{p(p-1)}.
$$
We now apply the bound $\sum_{p\le x}\frac{1}{p}<\llog x + B_1 + 1/(\log x)^2$ from \cite[3.20]{Rosser:PrimeSums}, where $B_1= 0.261497\ldots$, and also $\sum_p\frac{1}{p(p-1)} = 0.773156\ldots$ from \cite{Cohen:HardyLittlewood}, to obtain
$$
\log\prod_{p\le x}\frac{p+1}{p-1} < 2\llog x + 2.218,
$$
valid for $x\ge 41$.  This yields $\prod_{p\le x}\frac{p+1}{p-1} < 9.189\cdot \log^2 x$.  We also have the bound $x(1-1/\log x) < \sum_{p\le x}\log p$, valid for $x\ge 41$, by \cite[3.16]{Rosser:PrimeSums}, which implies
$$
\prod_{p\le x}\frac{p+1}{p-1} < 9.189\cdot\log^2(1.369\cdot \log v),
$$
For $x\ge 41$ we have $\log v > 30$, and the RHS is then smaller than $11\llog^2(v+4)$.
\end{proof}

\section*{Appendix 2}\label{section:Appendix2}
Here we list some of the torsion constraints used to accelerate the search for an elliptic curve $E/\Fp$ with $p+1\pm t$ points, as described in Section 3.
Each constraint has the form $m = a\cdot b\cdot N$, where $a$ is a power of 2 and $b$ is a power of 3.
Curves with a point of order $N$ are generated using a plane model for $X_1(N)$ as in \cite{Sutherland:PrescribedTorsion}, then filtered to ensure that the constraints implied by $a$ and $b$ are also met.  When $a$ or $b$ is expressed in exponential notation, it is meant to control the exact power of 2 or 3 that divides $\#E$.  The torsion constraint $14=2^0\cdot3^0\cdot 14$, for example, indicates that $\#E$ is divisible by 14 but not divisible by 3 or 4.

Efficient methods for analyzing the Sylow 2-subgroup of $E(\Fp)$ are considered in \cite{Miret:EC2Sylow,Sutherland:PrescribedTorsion}, and for 3-torsion we use the 3-division polynomial \cite[\S~3.2]{Washington:EllipticCurves}.
For the sake of brevity, here we consider constraints on the Sylow 2-subgroup only up to 4-torsion, but one may obtain minor improvements using $2^k$-torsion for larger $k$.

The benefit of each constraint is computed as 1/$r$, where $r$ is the proportion of elliptic curves $E/\Fp$ that satisfy the constraint.  We derive $r$ using \cite[Thm.~1.1]{Howe:EllipticCurveOrders}, under the simplifying assumption that if $N$ divides $\#E$, then $E(\Fp)$ contains a point of order $N$ (necessarily true when the square part of $N$ is coprime to $p-1$).  A more precise estimate may be obtained from \cite[Thm.~3.15]{Gekeler:ECGroupStructures}.  Table~5 assumes that $p\equiv 1\bmod 3$ and $p\not\equiv 1\bmod\ell$ for primes $\ell>3$ dividing $N$.  It is easily adjusted to other cases via \cite[Thm.~1.1]{Howe:EllipticCurveOrders}; this will change the rankings only slightly.

The cost of each constraint was determined empirically (and is somewhat implementation dependent).
For a random set of primes $p$ of suitable size (30-50 bits) we measured the average time to: (1) generate a curve $E/\Fp$ satisfying the constraint, (2) obtain a random point $P\in E(\Fp)$, and (3) compute the points $(p+1)P$ and $tP$.  This is compared to the cost of (2) and (3) alone (the ``null case"
 for Algorithm~1.1, excluding \textsc{TestCurveOrder} which is rarely called).
The parametrizations of \cite{Atkin:ECMCurves} combine (1) and (2), enabling a cost of less than 1.0 in some cases.

\pagebreak
The rankings in Table~5 assume each constraint is applicable to both $N_0=p+1-t$ and $N_1=p+1+t$; if not, the effective ratio is about half the listed value ($9/16$, on average).  For given values of $p$ and $t$, we thus consider three possible constraints, one satisfied by $N_0$, one by $N_1$, and one by both, and pick the best of the three.

\begin{table}
\begin{center}
\begin{tabular}{@{}rrrrrrrrrr@{}}
$m$ &torsion&benefit&cost&ratio&$\qquad m$&torsion&benefit&cost&ratio\\
\midrule
  33 & $2^0 \cdot 3 \cdot 11$ & 80.0 &  2.3 & 34.3 &   44 & $4\cdot 11 $ & 24.0 &  1.8 & 13.0\\
  39 & $2^0 \cdot 3 \cdot 13$ & 96.0 &  3.0 & 31.5 &   93 & $2^0 \cdot 3 \cdot 31$ & 240.0 & 18.9 & 12.7\\
  51 & $2^0 \cdot 3 \cdot 17$ & 128.0 &  4.4 & 29.0 &   34 & $2^1 \cdot 17$ & 64.0 &  5.0 & 12.7\\
  15 & $2^0 \cdot 15$ & 32.0 &  1.2 & 26.4 &   28 & $2\cdot 14 $ & 14.4 &  1.2 & 12.4\\
  11 & $2^0 \cdot 11$ & 30.0 &  1.2 & 25.9 &   52 & $4\cdot 13 $ & 28.8 &  2.4 & 12.2\\
  57 & $2^0 \cdot 3 \cdot 19$ & 144.0 &  5.7 & 25.4 &   18 & $2^0 \cdot 18$ & 26.2 &  2.2 & 12.0\\
  66 & $2^1 \cdot 3 \cdot 11$ & 106.7 &  4.3 & 24.7 &   36 & $2\cdot 18 $ & 15.7 &  1.3 & 12.0\\
  21 & $2^0 \cdot 3 \cdot  7$ & 48.0 &  2.1 & 23.1 &   68 & $4\cdot 17 $ & 38.4 &  3.4 & 11.2\\
  69 & $2^0 \cdot 3 \cdot 23$ & 176.0 &  7.8 & 22.4 &   38 & $2^1 \cdot 19$ & 72.0 &  6.7 & 10.8\\
  78 & $2^1 \cdot 3 \cdot 13$ & 128.0 &  5.8 & 22.0 &   10 & $2^0 \cdot 10$ & 16.0 &  1.5 & 10.7\\
  13 & $2^0 \cdot 13$ & 36.0 &  1.6 & 21.8 &  174 & $2^1 \cdot 3 \cdot 29$ & 298.7 & 28.6 & 10.4\\
   9 & $2^0 \cdot  9$ & 19.6 &  1.0 & 20.2 &   20 & $2\cdot 10 $ &  9.6 &  0.9 & 10.4\\
 102 & $2^1 \cdot 3 \cdot 17$ & 170.7 &  8.5 & 20.0 &  348 & $ 4 \cdot 3 \cdot 29$ & 179.2 & 18.0 & 9.9\\
  42 & $2^0 \cdot 3 \cdot 14$ & 64.0 &  3.2 & 20.0 &   76 & $4\cdot 19 $ & 43.2 &  4.4 & 9.9\\
   7 & $2^0 \cdot  7$ & 18.0 &  0.9 & 19.6 &   46 & $2^1 \cdot 23$ & 88.0 &  9.2 & 9.5\\
 132 & $ 4 \cdot 3 \cdot 11$ & 64.0 &  3.3 & 19.2 &   29 & $2^0 \cdot 29$ & 84.0 &  8.9 & 9.5\\
  17 & $2^0 \cdot 17$ & 48.0 &  2.5 & 19.0 &   48 & $3\cdot 16$ & 21.3 &  2.3 & 9.4\\
 156 & $ 4 \cdot 3 \cdot 13$ & 76.8 &  4.2 & 18.2 &    3 & $2^0 \cdot  3$ &  8.0 &  0.9 & 9.2\\
 204 & $ 4 \cdot 3 \cdot 17$ & 102.4 &  5.9 & 17.5 &   92 & $4\cdot 23 $ & 52.8 &  6.0 & 8.8\\
 114 & $2^1 \cdot 3 \cdot 19$ & 192.0 & 11.0 & 17.4 &   12 & $ 12$ &  6.4 &  0.7 & 8.8\\
  30 & $2^1 \cdot 15$ & 42.7 &  2.5 & 16.9 &  186 & $2^1 \cdot 3 \cdot 31$ & 320.0 & 36.9 & 8.7\\
  84 & $ 2 \cdot 3 \cdot 14$ & 38.4 &  2.3 & 16.5 &   31 & $2^0 \cdot 31$ & 90.0 & 11.5 & 7.8\\
  19 & $2^0 \cdot 19$ & 54.0 &  3.3 & 16.4 &    6 & $2^0 \cdot  6$ & 10.7 &  1.4 & 7.4\\
  22 & $2^1 \cdot 11$ & 40.0 &  2.5 & 16.2 &   16 & $ 16$ &  8.0 &  1.1 & 7.2\\
 228 & $ 4 \cdot 3 \cdot 19$ & 115.2 &  7.4 & 15.7 &   58 & $2^1 \cdot 29$ & 112.0 & 17.4 & 6.4\\
  87 & $2^0 \cdot 3 \cdot 29$ & 224.0 & 14.5 & 15.5 &  116 & $4\cdot 29 $ & 67.2 & 11.0 & 6.1\\
 138 & $2^1 \cdot 3 \cdot 23$ & 234.7 & 15.3 & 15.4 &    8 & $  8$ &  4.0 &  0.7 & 5.9\\
  26 & $2^1 \cdot 13$ & 48.0 &  3.3 & 14.4 &   62 & $2^1 \cdot 31$ & 120.0 & 23.0 & 5.2\\
  23 & $2^0 \cdot 23$ & 66.0 &  4.7 & 14.2 &  124 & $4\cdot 31 $ & 72.0 & 14.2 & 5.1\\
 276 & $ 4 \cdot 3 \cdot 23$ & 140.8 & 10.0 & 14.0 &    2 & $2^0 \cdot  2$ &  4.0 &  0.9 & 4.3\\
  14 & $2^0 \cdot 14$ & 24.0 &  1.7 & 13.8 &    4 & $  4$ &  2.4 &  0.6 & 3.8\\
  60 & $4\cdot 15 $ & 25.6 &  1.9 & 13.5 &    1 & $2^0 \cdot  1$ &  3.0 &  0.8 & 3.7\\
   5 & $2^0 \cdot  5$ & 12.0 &  0.9 & 13.0\\
\bottomrule
\end{tabular}
\\
\vspace{6pt}
\textsc{Table} 5. Ranking of $m$-torsion constraints (for $p\equiv 1 \bmod 3$).\\
\vspace{6pt}
\small
Dominated constraints are not listed, e.g. $3\cdot 4\cdot 31$ is always inferior to 12.
\end{center}
\normalsize
\end{table}
\normalsize

\bibliographystyle{amsplain}
\providecommand{\bysame}{\leavevmode\hbox to3em{\hrulefill}\thinspace}
\providecommand{\MR}{\relax\ifhmode\unskip\space\fi MR }
\providecommand{\MRhref}[2]{%
  \href{http://www.ams.org/mathscinet-getitem?mr=#1}{#2}
}
\providecommand{\href}[2]{#2}

\end{document}